 \def\ps@pprintTitle{%
 	\let\@oddhead\@empty
 	\let\@evenhead\@empty
 	\def\@oddfoot{\footnotesize\itshape
 		{} \hfill\today}%
 	\let\@evenfoot\@oddfoot
 }
\NewDocumentEnvironment{multiequation}{b}
 {
  \vantiempham:n { #1 }
 }
 {}
\providecommand*{\cupdot}{%
  \mathbin{%
    \mathpalette\@cupdot{}%
  }%
}
\newcommand*{\@cupdot}[2]{%
  \ooalign{%
    $\m@th#1\cup$\cr
    \hidewidth$\m@th#1\cdot$\hidewidth
  }%
}
\newtheorem{theor}{Theorem}[section]
\newtheorem{prop}[theor]{Proposition}
\newtheorem{lemma}[theor]{Lemma}
\newtheorem{cor}[theor]{Corollary}
\theoremstyle{definition}               
\newtheorem{defin}[theor]{Definition}
\newtheorem{ex}[theor]{Example}
\newtheorem{exs}[theor]{Examples}
\newtheorem{rem}[theor]{Remark}
\newtheorem{rems}[theor]{Remarks}
\newtheorem*{alg*}{Algorithm}
\DeclareMathOperator{\Sym}{Sym}
\DeclareMathOperator{\Aff}{Aff}
\DeclareMathOperator{\Aut}{Aut}
\DeclareMathOperator{\End}{End}
\DeclareMathOperator{\Map}{Map}
\DeclareMathOperator{\id}{id}
\DeclareMathOperator{\Ret}{Ret}
\DeclareMathOperator{\LMlt}{LMlt}
\DeclareMathOperator{\RMlt}{RMlt}
\newcommand{\lambdaa}[2]{\lambda_{#1}{#2}}
\newcommand{\rhoo}[2]{\rho_{#1}{#2}}
\newcommand{\alphaa}[3]{\alpha^{#1}_{#2}{#3}}
\newcommand{\betaa}[3]{\beta^{#1}_{#2}{#3}}
\newcommand{\phii}[2]{\phi_{#1,#2}}
\newcommand{\X}[1]{X_{#1}}
\newcommand{\K}{\mathcal{K}}
\newcommand{\Kinj}{\mathcal{K}_{\text{inj}}}
\newcommand{\Kbij}{\mathcal{K}_{\text{bij}}}
\newcommand{\Z}{\mathbb{Z}}
\def \@evenhead {\thepage\ of \pageref{LastPage} \hfil \slshape \leftmark } 
\def \@oddhead {{\slshape \rightmark }\hfil \thepage\ of \pageref{LastPage}} 
\begin{document}

\let\today\relax 

\begin{frontmatter}

 	\title{\LARGE{Reflections to set-theoretic solutions of the Yang-Baxter equation} \tnoteref{mytitlenote}}

\author {Andrea~ALBANO}
	
	 \author {Marzia~MAZZOTTA}
	\author{Paola~STEFANELLI}
	\address{Dipartimento di Matematica e Fisica ``Ennio De Giorgi"\\
		Universit\`{a} del Salento\\
		Via Provinciale Lecce-Arnesano \\
		73100 Lecce (Italy)\\}

	\begin{abstract}
    The main aim of this paper is to determine \emph{reflections} to bijective and non-degenerate solutions of the Yang-Baxter equation, by exploring their connections with their derived solutions.
    This is motivated by a recent description of left non-degenerate solutions in terms of a family of automorphisms of their associated left rack. 
    In some cases, we show that the study of reflections for bijective and non-degenerate solutions can be reduced to those of derived type. Moreover, we extend some results obtained in the literature for reflections of involutive non-degenerate solutions to more arbitrary solutions.
    Besides, we provide ways for defining reflections for solutions obtained by employing some classical construction techniques of solutions. Finally, we gather some numerical data on reflections for bijective non-degenerate solutions associated with skew braces of small order.
\end{abstract} 
	
	\begin{keyword}
		Yang-Baxter equation \sep Reflection equation \sep Set-theoretic solution \sep Structure shelf
		\MSC[2020] 16T25 \sep 81R50 \sep 20N02
	\end{keyword}

\end{frontmatter}

\section*{Introduction}

The \emph{Yang-Baxter equation} was first introduced by Yang \cite{Ya67} to study a gas of spin with a $\delta$ interaction and, independently, by Baxter \cite{Ba72} to investigate a two-dimensional solvable statistical model. Much of the attention has been directed to the so-called \emph{quantum} Yang-Baxter equation, which is fundamental in mathematical physics since it occupies a central role in the theory of quantum integrable systems \cite{Ji90}. The search for solutions to this equation has attracted numerous authors, and it is still an open problem. Due to the complexity of the solutions, Drinfel'd \cite{Dr92} suggested studying a simplified class. Into the specific, if $X$ is a set and
$r:X\times X\to X\times X$ is a map, the pair $(X, r)$ is said to be a \textit{set-theoretic solution of the Yang-Baxter equation}, briefly a \emph{solution}, if the identity
\begin{align}\tag{YBE}
\left(r\times\id_X\right)
\left(\id_X\times r\right)
\left(r\times\id_X\right)
= 
\left(\id_X\times r\right)
\left(r\times\id_X\right)
\left(\id_X\times r\right)
\end{align}
holds.  Writing $r\left(x, y\right) = \left(\lambda_{x}\left(y\right), \rho_{y}\left(x\right)\right)$, where $\lambda_{x}$ and $\rho_{y}$ are maps from $X$ into itself, for all $x,y\in X$, then
$r$ is a solution if and only if
    \begin{align}
     &\label{first} \lambda_x\lambda_y(z)=\lambda_{\lambda_x\left(y\right)}\lambda_{\rho_y\left(x\right)}\left(z\right)\tag{Y1}\\
    &  \label{second}\lambda_{\rho_{\lambda_y\left(z\right)}\left(x\right)}\rho_z\left(y\right)=\rho_{\lambda_{\rho_y\left(x\right)}\left(z\right)}\lambda_x\left(y\right)\tag{Y2}\\
      &\label{third}\rho_z\rho_y(x)=\rho_{\rho_z\left(y\right)}\rho_{\lambda_y\left(z\right)}\left(x\right)\tag{Y3}
  \end{align} 
  for all $x,y,z \in X$. With this notation, 
a solution is \textit{bijective} if $r$ is  a bijective map; \textit{involutive} if $r^2=\id_{X\times X}$; \emph{idempotent} if $r^2=r$; \textit{left non-degenerate} if $\lambda_x \in \Sym_X$, for all $x \in X$; \textit{right non-degenerate} if $\rho_x \in \Sym_X$, for all $x \in X$, and \emph{non-degenerate} if it is both left and right non-degenerate. In the beginning, many authors mainly focused on the class of bijective non-degenerate solutions. In the years that followed, the attention has been shifted to more arbitrary classes of solutions and a wide range of papers has been published in the literature.

Similarly to the YBE, the \emph{reflection equation} is an important tool in the theory of quantum groups and integrable systems. This equation was first studied by Cherednik \cite{Ch84} to encode the reflection on the boundary of particles in quantum field theory and by Sklyanin \cite{Sk88} to prove the integrability of quantum models with boundaries. The set-theoretic version of this equation was formulated by Caudrelier and Zhang in \cite{CaZh14}, who also provided the first examples of solutions. It turns out that the set-theoretic reflection equation jointly with the YBE ensures the factorization property of the interactions of $N$-soliton solutions on the half-line. For an explanatory illustration, see Figs. 1-2 in \cite{LeVe22}. \\
If $(X,r)$ is an arbitrary solution, a map $\kappa:X\to X$ is a \textit{set-theoretic solution of the reflection equation}, or briefly a \emph{reflection}, for $(X,r)$ if it holds
	\begin{align}\label{RE}\tag{RE}
	r\left(\id_{X}\times \kappa\right)r\left(\id_{X}\times \kappa\right)
	= \left(\id_{X}\times \kappa\right)r\left(\id_{X}\times \kappa\right)r.
	\end{align}
 One can easily see that \eqref{RE} is equivalent to requiring that, for all $x, y \in X$,
 \begin{align}\label{t1}
     &\lambda_{\lambda_x(y)} \kappa\rho_y\left(x\right)=\lambda_{\lambda_x(\kappa(y))} \kappa\rho_{\kappa(y)}\left(x\right), \tag{T}\\
     &\rho_{\kappa\rho_{\kappa(y)}(x)} \lambda_x{\kappa(y)}=\kappa\rho_{\kappa\rho_y(x)} \lambda_x\left(y\right). \label{q1}\tag{Q}
 \end{align}
The interplay between solutions to the YBE and reflections was deepened by Caudrelier, Cramp\'e, and Zhang in \cite{CaCrZh13}. In particular, they provided a method for constructing all the reflection maps associated with the quadrirational Yang–Baxter maps.
In the following years, Smoktunowicz, Vendramin, and Weston \cite{SmVeWe20} proposed a first more systematic approach to the set-theoretic case that makes use of ring-theoretic methods, and more generally methods coming from
the theory of braces, to produce families of reflections. The reflections they provide belong to the class of what we call \emph{$\lambda$-centralizing maps}, namely, maps $\kappa$ such that $\lambda_x \kappa=\kappa \lambda_x$, for all $x \in X$.  Furthermore, De Commer \cite{Decommer19} introduces a notion of braided action of a group with braiding and shows that it provides reflections.
Besides, Doikou and Smoktunowicz \cite{DoSm21} investigate connections between set-theoretic Yang–Baxter and reflection equations and quantum integrable systems. More recently, Lebed and Vendramin \cite{LeVe22} have constructed a family of solutions $r^{(\kappa)}$ indexed by a reflection $\kappa$ for a right non-degenerate solution $(X, r)$, including the solution $r$ itself and its derived solution. They also show that if $(X, r)$ is an involutive left non-degenerate solution, a map $\kappa$ is a reflection if and only if \eqref{t1} holds; similarly, if $(X, r)$ is right non-degenerate, $\kappa$ is a reflection if and only if \eqref{q1} holds. Thus, $\lambda$-centralizing maps are reflections for involutive left-non degenerate solutions, while maps $\kappa$ satisfying $\rho_{\kappa(x)}=\rho_x$, for all $x \in X$, that we call \emph{$\rho$-invariant},
are reflections for the involutive right non-degenerate ones. We mention other interesting works on the reflection equation as \cite{KuSa93, AzKu94, DoMa03, Kat19x, KuOk19, Do24}.

The main aim of this paper is to study reflections for non-degenerate solutions employing the left and the right shelf structures associated with them. We briefly recall that a \emph{left shelf} is a set $X$ endowed with a left self-distributive binary operation $\triangleright$, namely, the identity $x\triangleright\left(y\triangleright z\right) = \left(x\triangleright y\right)\triangleright\left(x\triangleright z\right)$ is satisfied, for all $x,y,z\in X$. Equivalently, for all $x \in X$, the left multiplication $L_x:X\to X$ defined by $L_x\left(y\right) = x\triangleright y$, for all $y\in X$, is a shelf homomorphism. In particular, if $\left(X, r\right)$ is a left non-degenerate solution, set $x \triangleright_r y:= \lambda_x\rho_{\lambda_y^{-1}(x)}(y)$, for all $x,y\in X$, one obtain that $\left(X, \triangleright_r\right)$ is a left shelf that gives rise to a solution  $\left(X,r_\triangleright\right)$, with $r_\triangleright\left(x,y\right) = \left(y, y\triangleright_r x\right)$, for all $x,y\in X$, called the \emph{left derived solution of $r$}. Analogously, one can define the structure $\left(X, \triangleleft\right)$ of \emph{right shelf}, introduce the right multiplications $R_x: X \to X$ given by $R_x(y)= y \triangleleft x$, for all $y \in X$, and the link with right non-degenerate solutions. For our purposes, it will also be useful to introduce classical tools of this theory, such as the \textit{left multiplication group} of a left shelf $(X, \triangleright)$ given by $\LMlt(X,\, \triangleright)=\langle L_x \, \mid \ x \in X\rangle$. For more details on this topic, see \cite{FeSaFa04, FeRo92,  Le18, LeVe19}.

Our study is motivated by a recent description of left non-degenerate solutions in terms of left shelves structures provided in \cite[Theorem 2.15]{DoRySt24x}. 
Indeed, any left non-degenerate solution can be written in terms of a family of automorphisms of the left shelf associated with it, i.e., the \emph{Drinfel'd twist}, and of the shelf operation itself. 
This naturally suggests splitting the study of reflections into their behaviour with respect to the maps $\lambda_x\in\Aut\left(X, \triangleright_r\right)$ and left multiplications $L_x$. Hence, reflections further motivate the investigation of the endomorphism monoid of shelves.

We first focus on reflections for solutions that are only left or right non-degenerate since, in our analysis, they turn out to be different from each other. Following some existing approaches mentioned before, we pay particular attention to $\lambda$-centralizing and $\rho$-invariant reflections and characterize them in the left and right non-degenerate cases in \cref{prop:refl-lnd} and in \cref{prop:refl-rnd}, respectively. Besides, we reformulate the results achieved in terms of the respective multiplication groups of their associated shelf structure (see \cref{bij_refl:bij_nondeg_sol} and \cref{cor_right-bij}). Then, we deal with the non-degenerate bijective case, by combining the observations made separately for the left and right cases, and developing a theory for such solutions. In this context, in Appendix \ref{sec:appendix}, we gather some numerical data on reflections for bijective non-degenerate solutions associated with skew braces, dividing them by classes of belonging.

The last section is devoted to constructing reflections starting from known ones. More specifically, considering construction techniques for arbitrary solutions, as the \emph{matched product} \cite{CCoSt20} and the \emph{strong semilattice of solutions} \cite{CCoSt20x-2}, we define reflections for the new solution obtained, starting from reflections on each solution involved in the construction itself. 
Moreover, we focus on the \emph{twisted union}, a method for obtaining involutive non-degenerate solutions provided in \cite{ESS99}. In particular, we first extend this technique to arbitrary solutions and then we show how to construct a reflection for new solutions determined.

\bigskip


\section{Preliminaries}
In this section, for the ease of the reader, we provide the necessary background for the rest of the paper.

\subsection{Basics on the reflection equation}

\medskip

\begin{defin}\cite[Definition 1.1]{CaCrZh13}
	Let $(X,r)$ be a solution. A map $\kappa:X\to X$ is a \textit{set-theoretic solution of the reflection equation}, or briefly a \emph{reflection}, for $(X,r)$ if it holds
	\begin{align*}
	r\left(\id_{X}\times \kappa\right)r\left(\id_{X}\times \kappa\right)
	= \left(\id_{X}\times \kappa\right)r\left(\id_{X}\times \kappa\right)r.
	\end{align*}
 Moreover, we say that the reflection $\kappa$ is \emph{involutive} if $\kappa^{2} = \id_{X}$.
\end{defin}
 We denote with $\K(X, r)$ the set of all reflections for a solution $(X, r)$. Moreover, $\Kinj(X,r)$ and $\Kbij(X,r)$ will denote the set of all \emph{injective} and \emph{bijective} reflections for $(X, r)$, respectively. If the solution is clear from the context, we will briefly write $\K(X)$.
Note that $\K(X)$ is non-empty since the identity map $\id_X$ is always a reflection for all solutions defined on the set $X$.

\begin{rems}\label{iso:refl} \hspace{1mm}
If $(X,r)$ is a solution, let us observe some easy but essential properties:
\begin{enumerate}
\item If $(Y, s)$ is equivalent to $(X,r)$ via a bijection $\varphi: X \to Y$ (see \cite{ESS99}), namely, $(\varphi \times \varphi)r=s(\varphi \times \varphi)$, and $\kappa \in \K(X, r)$, then $\kappa_{\varphi}:=\varphi\kappa\varphi^{-1} \in \K(Y,s)$. 
\item    If $(X,r)$ is bijective and $\kappa \in \Kbij(X,r)$, then $\kappa^{-1} \in \Kbij(X,r^{-1})$.
\end{enumerate}
\end{rems}

\smallskip

In the following, we provide some examples of reflections of solutions.

\begin{exs}\label{exs_riflessioni} 
Assume that $X$ is a set.
\begin{enumerate}
    \item   Any map $\kappa: X \to X$ is a reflection for the solution $\left(X, \id_X\right)$.
    \item Let $c\in X$ and let $\left(X, r\right)$ be the idempotent solution given by $r\left(x, y\right) = \left(c, c\right)$, for all $x, y \in X$. 
    Then, it is easy to check that any map $\kappa:X\to X$ such that $\kappa\left(c\right) = c$ is a reflection for $\left(X, r\right)$.
    \item Let $\left(X, r\right)$ be the idempotent left non-degenerate solution with $r\left(x, y\right) = \left(y, y\right)$, for all $x, y \in X$. Then, a map $\kappa:X\to X$ is a reflection for $\left(X, r\right)$ if and only if $\kappa^2 = \kappa$.
       \item The unique reflection for the idempotent right non-degenerate solution $\left(X, s\right)$ given by $s\left(x, y\right) = \left(x, x\right)$, for all $x, y \in X$, is the identity map $\id_X$.
\end{enumerate}
\end{exs}

Note that even though the solutions $(X, r)$ and $(X, s)$ in \cref{exs_riflessioni}-$3.\&4.$ are such that $r=\tau s \tau$, with $\tau$ the flip map, their reflections do not share any relevant property.

\begin{ex}\cite[Example 1.4]{SmVeWe20}\label{ex:fg}
Let $X$ be a set, $f,g \in \Sym_X$ such that $fg=gf$, and consider the bijective non-degenerate solution $r(x,y)=(f(y), g(x))$ belonging to the class of Lyubashenko's solutions \cite{Dr92}.
A map $\kappa: X \to X$ is a reflection for $(X, r)$ if and only if $\kappa$ commutes with $fg$. In particular, if $(X, r)$ involutive, i.e., $g=f^{-1}$, any map $\kappa: X \to X$ is a reflection for $(X, r)$.
\end{ex}
\begin{ex}\cite[Example 1.3]{SmVeWe20}\label{ex_123}
Let $X = \{1, 2, 3\}$ and $r(x, y) = \left(f_x\left(y\right), f_y\left(x\right)\right)$ the involutive non-degenerate solution given by $f_1=f_2=\id_X$ and
$f_3=(1 \ 2)$. Then, there exist five reflection maps. To write them, we denote the image of $\kappa$ as the string $\kappa(1)\kappa(2)\kappa(3)$.  They are given by
\begin{align*}
 \kappa_1=123=\id_X, \quad\kappa_2=113, \quad  \kappa_3=213, \quad \kappa_4=223, \quad \kappa_5=333.
\end{align*}
Note that $\kappa_j$ is involutive if and only if $j \in \{1, 3\}$.
\end{ex}

\smallskip

In \cite[Lemma 1.7]{SmVeWe20}, it is introduced the following notation to study the reflections of a solution $(X, r)$. 
\begin{lemma}\label{lemma_rifl_qt}
Let $(X, r)$ be a solution and let $\kappa: X \to X$ be a map. Set
    \begin{align*}
   t(x,y):=\lambda_{\lambda_x(y)} \kappa\rho_y\left(x\right) \quad \text{and} \quad   q(x,y):=\rho_{\kappa\rho_y(x)} \lambda_x\left(y\right),
\end{align*}
for all $x,y\in X$. Then, $\kappa \in \K(X, r)$ if and only if the following hold
    \begin{align}
   t(x,\kappa(y))=t(x,y) \label{t} \tag{T}\\
   q(x, \kappa(y))=\kappa(q(x,y)) \label{q} \tag{Q}
\end{align}
for all $x,y\in X$.
\end{lemma}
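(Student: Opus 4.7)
The plan is to prove this by a direct computation, unfolding both sides of the reflection equation when applied to a generic pair $(x,y) \in X \times X$ and reading off the first and second coordinates. Since everything is equality of maps $X \times X \to X \times X$, showing equality componentwise on every input is sufficient.

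First I would evaluate the right-hand side $(\id_X \times \kappa)\, r\, (\id_X \times \kappa)\, r$ on $(x,y)$, applying the maps from right to left: $r$ sends $(x,y)$ to $(\lambda_x(y), \rho_y(x))$, then $\id_X \times \kappa$ gives $(\lambda_x(y), \kappa\rho_y(x))$, then $r$ produces
\[
\bigl(\lambda_{\lambda_x(y)}\kappa\rho_y(x),\ \rho_{\kappa\rho_y(x)}\lambda_x(y)\bigr) = \bigl(t(x,y),\, q(x,y)\bigr),
\]
and finally $\id_X \times \kappa$ yields $\bigl(t(x,y),\, \kappa(q(x,y))\bigr)$.

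Next I would compute the left-hand side $r\,(\id_X \times \kappa)\, r\,(\id_X \times \kappa)$ on $(x,y)$ in the same way. The first $\id_X \times \kappa$ sends $(x,y)$ to $(x,\kappa(y))$; then $r$ gives $(\lambda_x\kappa(y),\rho_{\kappa(y)}(x))$; then $\id_X \times \kappa$ gives $(\lambda_x\kappa(y),\kappa\rho_{\kappa(y)}(x))$; then $r$ produces
\[
\bigl(\lambda_{\lambda_x\kappa(y)}\kappa\rho_{\kappa(y)}(x),\ \rho_{\kappa\rho_{\kappa(y)}(x)}\lambda_x\kappa(y)\bigr).
\]
By the definitions of $t$ and $q$, this last pair is precisely $\bigl(t(x,\kappa(y)),\, q(x,\kappa(y))\bigr)$.

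Comparing the two sides coordinatewise, the first coordinates agree if and only if $t(x,\kappa(y)) = t(x,y)$ for all $x,y$, i.e.\ condition \eqref{t}, and the second coordinates agree if and only if $q(x,\kappa(y)) = \kappa(q(x,y))$ for all $x,y$, i.e.\ condition \eqref{q}. Therefore $\kappa \in \K(X,r)$ if and only if both \eqref{t} and \eqref{q} hold. There is no real obstacle here; the only thing to be careful about is keeping track of composition order when applying $r$ and $\id_X \times \kappa$, and correctly substituting $\kappa(y)$ in place of $y$ in the expressions defining $t$ and $q$.
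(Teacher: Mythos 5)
Your proof is correct and is exactly the intended argument: the paper does not spell out a proof (it recalls this as \cite[Lemma 1.7]{SmVeWe20} and notes in the Introduction that \eqref{RE} is ``easily seen'' to be equivalent to the two componentwise identities), and that equivalence is obtained precisely by the unfolding you perform, with both sides of \eqref{RE} evaluated at $(x,y)$ giving $\bigl(t(x,\kappa(y)),\, q(x,\kappa(y))\bigr)$ and $\bigl(t(x,y),\, \kappa(q(x,y))\bigr)$ respectively. Nothing is missing.
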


\medskip

In  \cite[Theorem 3.2]{LeVe22}, it is shown that for involutive non-degenerate solutions, it is sufficient to check only one of the relations \eqref{t} and \eqref{q}.

\begin{prop}\label{reflec_invol}\hspace{1mm}
\begin{enumerate}
    \item Let $(X, r)$ be an involutive left non-degenerate solution and $\kappa: X\to X$ a map. Then, $\kappa \in \K(X, r)$ if and only if \eqref{t} holds.
    \item Let $(X, r)$ be an involutive right non-degenerate solution and $\kappa: X\to X$ a map. Then, $\kappa \in \K(X, r)$ if and only if \eqref{q} holds.
\end{enumerate}
\end{prop}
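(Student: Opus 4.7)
My plan is to invoke Lemma~\ref{lemma_rifl_qt}, which identifies $\kappa\in\K(X,r)$ with the simultaneous validity of \eqref{t} and \eqref{q}: the forward implication of each item is then immediate, and the only work lies in showing that, under the additional hypothesis of involutivity, either condition forces the other.

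For item~1, assume $r$ involutive and left non-degenerate. Writing $r^{2}=\id_{X\times X}$ out in coordinates yields the identities
\[
\lambda_{\lambda_x(y)}\rho_y(x)=x,\qquad \rho_{\rho_y(x)}\lambda_x(y)=y
\]
for all $x,y\in X$. Left non-degeneracy turns the first of these into the formula $\rho_y(x)=\lambda_{\lambda_x(y)}^{-1}(x)$, so $\rho$ is completely determined by $\lambda$. I would substitute this expression into both \eqref{t} and \eqref{q}. Introducing the shorthand $u:=\lambda_x(y)$ and $v:=\lambda_x(\kappa(y))$, the hypothesis \eqref{t} becomes
\[
\lambda_u\,\kappa\,\lambda_u^{-1}(x)=\lambda_v\,\kappa\,\lambda_v^{-1}(x),
\]
and denoting this common value by $w$, the parallel rewriting collapses \eqref{q} to the single equation $v=\lambda_w\,\kappa\,\lambda_w^{-1}(u)$. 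My target is then to derive this identity from the reformulated \eqref{t}, using the second involutivity relation to eliminate the remaining $\rho$ and, where needed, the involutive form of the YBE identity \eqref{first}, which under $\rho_y(x)=\lambda_{\lambda_x(y)}^{-1}(x)$ reads $\lambda_{\lambda_x(y)}^{-1}\lambda_x\lambda_y=\lambda_{\rho_y(x)}$.

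Item~2 is then proved dually: right non-degeneracy combined with involutivity yields $\lambda_x(y)=\rho_{\rho_y(x)}^{-1}(y)$, so that $\lambda$ is now determined by $\rho$, and the same substitution-and-collapse strategy reduces the derivation of \eqref{t} from \eqref{q} to a single identity in $\rho$ and $\kappa$, with $\rho_w$-conjugation playing the role of $\lambda_w$-conjugation above. I expect the principal obstacle to be algebraic bookkeeping rather than any genuinely new idea: after substituting the involutivity formula, the subscripts of $\lambda$ (or $\rho$) themselves contain running variables acted on by $\lambda$ (or $\rho$), so the introduction of the placeholders $u,v,w$ is essential to keep the cancellations transparent and to avoid circular rewriting.
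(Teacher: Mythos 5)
Your setup is sound: the reduction via \cref{lemma_rifl_qt}, the involutivity identities $\lambda_{\lambda_x(y)}\rho_y(x)=x$ and $\rho_{\rho_y(x)}\lambda_x(y)=y$, the rewriting of \eqref{t} as $\lambda_u\kappa\lambda_u^{-1}(x)=\lambda_v\kappa\lambda_v^{-1}(x)$, and the collapse of \eqref{q} (in the presence of \eqref{t}) to $v=\lambda_w\kappa\lambda_w^{-1}(u)$ are all correct. (The paper itself offers no proof of this statement; it cites \cite[Theorem 3.2]{LeVe22}, so there is nothing to compare against line by line.) The gap is precisely the one step you defer: you announce that the target identity will follow from the reformulated \eqref{t} ``using the second involutivity relation'' and the involutive form of \eqref{first}, and you predict the only obstacle is bookkeeping. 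That prediction is wrong. The instance of \eqref{t} at the pair $(x,y)$ speaks of $\lambda_u$- and $\lambda_v$-conjugation of $\kappa$ evaluated at the point $x$, while the target couples $\lambda_w$-conjugation of $\kappa$ with the point $u$; no amount of rewriting with \eqref{first} converts one into the other. The missing ingredient is a change of instance, i.e.\ a genuinely new (if small) idea.

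Concretely: apply \eqref{t} at the pair $r(x,y)=(\lambda_x(y),\rho_y(x))=(u,b)$ with $b:=\rho_y(x)=\lambda_u^{-1}(x)$, instead of at $(x,y)$. Involutivity gives $\lambda_u(b)=x$ and $\rho_b(u)=y$, hence $t(u,b)=\lambda_{\lambda_u(b)}\kappa\rho_b(u)=\lambda_x\kappa(y)=v$; on the other side $\lambda_u\kappa(b)=\lambda_u\kappa\rho_y(x)=t(x,y)=w$ and $\rho_{\kappa(b)}(u)=\lambda_{\lambda_u\kappa(b)}^{-1}(u)=\lambda_w^{-1}(u)$, so $t(u,\kappa(b))=\lambda_w\kappa\lambda_w^{-1}(u)$. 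Thus \eqref{t} evaluated at $(u,b)$ \emph{is} the identity $v=\lambda_w\kappa\lambda_w^{-1}(u)$ you need, and \eqref{first} is never used. The dual move --- apply \eqref{q} at $r(x,y)$ and use $\lambda_a(c)=\rho_{\rho_c(a)}^{-1}(c)$ --- closes item~2 identically. With this instantiation inserted, your argument becomes a complete and correct proof.
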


\smallskip

The authors in \cite[Theorem 1.9]{SmVeWe20} give sufficient conditions for a map $\kappa$ to be a reflection for an involutive left non-degenerate solution. To recall such a result, let us introduce the following notion.

\begin{defin}
Let $(X,r)$ be a solution. Then, a map $\kappa:X \to X$ is said to be \emph{$\lambda$-centralizing} if \begin{align}\label{lambda-centr}\tag{L}
    \forall\, x \in X \quad \lambda_x\kappa  = \kappa\lambda_x.
\end{align}
Similarly one can define the notion of \emph{$\rho$-centralizing} map.
\end{defin}

\noindent Notice that this notion was first given in \cite{SmVeWe20} in terms of $\mathcal{G}(X, r)$-equivariance, where $\mathcal{G}(X, r)$ is the involutive Yang-Baxter group generated by the maps $\lambda_x$.

\begin{prop} \label{left}
    Let $(X, r)$ be an involutive left non-degenerate solution. Then, any $\lambda$-centralizing map is a reflection for $(X, r)$.
\end{prop}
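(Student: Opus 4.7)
The plan is to invoke \cref{reflec_invol}(1), which reduces the problem to verifying condition \eqref{t}: for all $x,y \in X$,
\[
t(x,\kappa(y)) = t(x,y), \quad \text{i.e.,} \quad \lambda_{\lambda_x(\kappa(y))}\kappa\rho_{\kappa(y)}(x) = \lambda_{\lambda_x(y)}\kappa\rho_y(x).
\]
The strategy is to exploit involutivity to eliminate $\rho$ in favour of $\lambda$, after which the $\lambda$-centralizing hypothesis collapses both sides to the same expression.

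First I would use that $r^2 = \id_{X\times X}$ to rewrite $\rho$ in terms of $\lambda^{-1}$. Applying $r$ to $(\lambda_x(y),\rho_y(x))$ must return $(x,y)$, so in particular $\lambda_{\lambda_x(y)}\rho_y(x) = x$. Since $(X,r)$ is left non-degenerate this yields the identity
\[
\rho_y(x) \;=\; \lambda_{\lambda_x(y)}^{-1}(x), \qquad \forall\, x,y \in X.
\]
Substituting this into the definition of $t$ gives
\[
t(x,y) \;=\; \lambda_{\lambda_x(y)}\,\kappa\,\lambda_{\lambda_x(y)}^{-1}(x).
\]

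Next I would use the $\lambda$-centralizing hypothesis \eqref{lambda-centr}: since $\lambda_z \kappa = \kappa \lambda_z$ for every $z \in X$, conjugation by $\lambda_z$ leaves $\kappa$ invariant, i.e., $\lambda_z \kappa \lambda_z^{-1} = \kappa$. Applying this with $z = \lambda_x(y)$ yields $t(x,y) = \kappa(x)$, which is remarkably independent of $y$. The very same computation performed with $\kappa(y)$ in place of $y$ gives $t(x,\kappa(y)) = \lambda_{\lambda_x(\kappa(y))}\kappa\lambda_{\lambda_x(\kappa(y))}^{-1}(x) = \kappa(x)$. Hence $t(x,\kappa(y)) = t(x,y)$ as required.

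There is no real obstacle here; the only delicate point is ensuring that \cref{reflec_invol}(1) is indeed applicable so that one need only verify \eqref{t} and not \eqref{q}. Once that reduction is made, the proof is a two-line manipulation: rewrite $\rho_y(x)$ using involutivity, then collapse by centralization. It is worth noting that the argument in fact shows the stronger fact that for any $\lambda$-centralizing $\kappa$ (in the involutive left non-degenerate setting) the map $t(x,y)$ does not depend on $y$, which is what makes \eqref{t} hold trivially.
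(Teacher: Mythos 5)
Your proof is correct. The reduction to \eqref{t} via \cref{reflec_invol}-$1.$ is legitimate, the identity $\lambda_{\lambda_x(y)}\rho_y(x)=x$ is exactly what involutivity gives on first components, and the conjugation step $\lambda_z\kappa\lambda_z^{-1}=\kappa$ follows from \eqref{lambda-centr} together with left non-degeneracy, so $t(x,y)=\kappa(x)$ for all $y$ and \eqref{t} holds trivially.

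The paper itself does not prove this proposition directly (it is recalled from the literature); the route it implicitly offers is via \cref{prop:refl-lnd}: for an involutive left non-degenerate solution the associated shelf $(X,\triangleright_r)$ is the trivial rack, so all left multiplications $L_x$ are the identity and conditions $1.$ and $2.$ of that theorem are vacuous, whence every $\lambda$-centralizing map is a reflection. Your argument is the more elementary one, bypassing the shelf machinery entirely at the cost of leaning on \cref{reflec_invol}-$1.$ (itself a nontrivial cited result); the paper's route buys a uniform statement for arbitrary left non-degenerate solutions of which the involutive case is the degenerate instance. Your closing observation that $t(x,y)$ is independent of $y$ is a genuine strengthening worth keeping.
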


We underline that, in general, not all the reflections for an involutive left non-degenerate solution are $\lambda$-centralizing (see, for instance, $\kappa_2$ in \cref{ex_123}). 
Regarding the right non-degenerate case, we recall the following result contained in
\cite[Theorem 3.2]{LeVe22}.
\begin{prop}\label{right}
Let $(X, r)$ be an involutive right non-degenerate solution and let us assume that $\kappa: X \to X$ is a map such that 
\begin{align}\label{rho_inv}\tag{R}
 \forall\, x \in X \quad   \rho_x=\rho_{\kappa(x)}, 
\end{align}
then $\kappa \in \K(X, r)$.
\end{prop}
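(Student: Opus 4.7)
The plan is to invoke \cref{reflec_invol}(2), which reduces the task to verifying identity \eqref{q1} only, since $(X,r)$ is involutive right non-degenerate. Concretely, I need to show
\[
\rho_{\kappa\rho_{\kappa(y)}(x)}\,\lambda_x(\kappa(y)) \;=\; \kappa\,\rho_{\kappa\rho_y(x)}\,\lambda_x(y)
\]
for all $x,y\in X$.

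The first move is to eat the inner $\kappa$'s using the hypothesis \eqref{rho_inv}: applying $\rho_z=\rho_{\kappa(z)}$ with $z=\rho_{\kappa(y)}(x)$ on the left and with $z=\rho_y(x)$ on the right reduces the target identity to
\[
\rho_{\rho_{\kappa(y)}(x)}\,\lambda_x(\kappa(y)) \;=\; \kappa\,\rho_{\rho_y(x)}\,\lambda_x(y).
\]

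Next I would exploit involutivity. Writing $r^2=\id_{X\times X}$ componentwise gives the identity $\rho_{\rho_b(a)}\lambda_a(b)=b$ for all $a,b\in X$. Taking $a=x$, $b=y$ collapses the right-hand side above to $\kappa(y)$, while taking $a=x$, $b=\kappa(y)$ collapses the left-hand side to $\kappa(y)$ as well. This matches both sides and closes the argument.

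There is really no obstacle here: the only subtle point is being careful about \emph{where} the hypothesis \eqref{rho_inv} is applied, namely to rewrite $\rho_{\kappa(\,\cdot\,)}$ as $\rho_{(\,\cdot\,)}$ in the outer subscript of each side, so that the involutive identity from $r^2=\id$ can be used directly without any assumption about how $\kappa$ interacts with $\lambda_x$. In particular, one does \emph{not} need left non-degeneracy or any commutation relation between $\kappa$ and the $\lambda$-maps, which is what makes the statement dual to \cref{left}.
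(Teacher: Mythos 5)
Your proof is correct: reducing to \eqref{q} via \cref{reflec_invol}-2., then using $\rho$-invariance to strip the inner $\kappa$'s from the subscripts and the componentwise involutivity identity $\rho_{\rho_b(a)}\lambda_a(b)=b$ to collapse both sides to $\kappa(y)$, is exactly the right argument. The paper states this result without proof (citing \cite[Theorem 3.2]{LeVe22}), and your verification supplies the expected details with the hypotheses used in the right places.
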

\noindent Also in this case, this is only a sufficient condition, since the reflection $\kappa_5$ in \cref{ex_123} does not satisfy \eqref{rho_inv}. To study such a kind of maps, we introduce the following terminology:
\begin{defin}
   Let $(X, r)$ be a solution. Then, we call \emph{$\rho$-invariant} any map $\kappa: X \to X$ satisfying the property \eqref{rho_inv}.
   Similarly, one defines the notion of $\lambda$\emph{-invariance}.
\end{defin}

\begin{rem}
    The properties \eqref{lambda-centr} and \eqref{rho_inv} are invariant under equivalence.
    Indeed, if $(X, r)$ and $(Y, s)$ are equivalent solutions via a bijection $\varphi:X \to Y$ and $\kappa\in \K(X, r)$, then the following hold:
    \begin{enumerate}
        \item $\kappa$ is $\lambda$-centralizing if and only if $\kappa_{\varphi}$ is $\lambda$-centralizing;
        \item $\kappa$ is $\rho$-invariant if and only if $\kappa_{\varphi}$ is $\rho$-invariant.
    \end{enumerate}
\end{rem}

\smallskip

More generally, the following can be proven for a $\lambda$-invariant or $\rho$-invariant reflection.
\begin{lemma}\label{rem:obs}
Let $(X, r)$ be a solution and $\kappa: X \to X$ a map.
    \begin{enumerate}
        \item If $\left(X, r\right)$ is left non-degenerate and $\kappa$ is $\rho$-invariant for $(X, r)$, then:
\begin{align*}
            \kappa\ \text{is $\lambda$-invariant for $(X, r)$}
           \iff
           \forall\,x,y\in X\quad\lambda_{\lambda_x\kappa\left(y\right)} = \lambda_{\lambda_x\left(y\right)}.
        \end{align*}
        \item If  $\left(X, r\right)$ is right non-degenerate and $\kappa$ is $\lambda$-invariant,  for every $x\in X$, then:
\begin{align*}
            \kappa\ \text{is $\rho$-invariant for $(X, r)$}
           \iff
           \forall\,x,y\in X\quad\rho_{\rho_x\kappa\left(y\right)} = \rho_{\rho_x\left(y\right)}.
        \end{align*}
    \end{enumerate}
\end{lemma}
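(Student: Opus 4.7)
The plan is to solve both parts by solving the YBE relations (Y1) and (Y3) for the ``conjugate'' factors $\lambda_{\lambda_x(y)}$ and $\rho_{\rho_x(y)}$, then comparing what happens when $y$ is replaced by $\kappa(y)$ under each invariance hypothesis. Left and right non-degeneracy will let us cancel the outer factors cleanly, so both directions in each part become single-line manipulations.

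For part (1), the key observation is that (Y1) can be rewritten, using left non-degeneracy, as
\begin{equation*}
\lambda_{\lambda_x(y)}=\lambda_x\,\lambda_y\,\lambda_{\rho_y(x)}^{-1},
\end{equation*}
for all $x,y\in X$. Replacing $y$ by $\kappa(y)$ yields
\begin{equation*}
\lambda_{\lambda_x(\kappa(y))}=\lambda_x\,\lambda_{\kappa(y)}\,\lambda_{\rho_{\kappa(y)}(x)}^{-1}.
\end{equation*}
Since $\kappa$ is $\rho$-invariant, $\rho_{\kappa(y)}=\rho_y$, so the rightmost factor coincides with $\lambda_{\rho_y(x)}^{-1}$. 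First I would assume $\kappa$ is $\lambda$-invariant: then $\lambda_{\kappa(y)}=\lambda_y$ and the two expressions agree, giving $\lambda_{\lambda_x\kappa(y)}=\lambda_{\lambda_x(y)}$. Conversely, starting from the latter identity and equating the two factorizations above, left non-degeneracy lets me cancel $\lambda_x$ on the left and $\lambda_{\rho_y(x)}^{-1}$ on the right, leaving $\lambda_{\kappa(y)}=\lambda_y$, that is, $\lambda$-invariance.

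Part (2) is the mirror image using (Y3). Right non-degeneracy lets me rewrite (Y3) as
\begin{equation*}
\rho_{\rho_x(y)}=\rho_x\,\rho_y\,\rho_{\lambda_y(x)}^{-1},
\end{equation*}
(swapping the roles of the variables suitably), and similarly
\begin{equation*}
\rho_{\rho_x\kappa(y)}=\rho_x\,\rho_{\kappa(y)}\,\rho_{\lambda_{\kappa(y)}(x)}^{-1}.
\end{equation*}
Since $\kappa$ is $\lambda$-invariant, the rightmost factor again matches across the two expressions, so equality of $\rho_{\rho_x\kappa(y)}$ and $\rho_{\rho_x(y)}$ becomes equivalent, after cancelling on both sides via right non-degeneracy, to $\rho_{\kappa(y)}=\rho_y$.

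No serious obstacle is anticipated: the only thing to be careful about is getting (Y1) and (Y3) rewritten with the correct assignment of variables so that the inner subscripts ($\rho_y(x)$ in Y1, $\lambda_y(x)$ in Y3) are precisely what the hypotheses on $\kappa$ control. Once that bookkeeping is right, both implications in each part follow by cancelling invertible outer factors, and the whole statement reduces to two short symmetric arguments.
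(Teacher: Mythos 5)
The paper states this lemma without giving a proof, so there is nothing to compare against; your argument is correct and complete. Rewriting \eqref{first} as $\lambda_{\lambda_x(y)}=\lambda_x\lambda_y\lambda_{\rho_y(x)}^{-1}$ and \eqref{third} as $\rho_{\rho_x(y)}=\rho_x\rho_y\rho_{\lambda_y(x)}^{-1}$ (legitimate by left, resp.\ right, non-degeneracy), substituting $\kappa(y)$ for $y$, using the assumed invariance to match the outer factors, and then cancelling the invertible maps is exactly the intended computation, and both directions of each equivalence follow as you describe.
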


The following are sufficient conditions for obtaining reflections of an involutive left (resp. right) non-degenerate solution $(X, r)$ belonging to the class of $\rho$-invariant maps.

\begin{prop}\label{prop:lambdakappa}
    Let $(X, r)$ be an involutive solution and let $\kappa:X\to X$ be a map.
    \begin{enumerate}
        \item  If $(X, r)$ is left non-degenerate and  it holds that $\lambda_{\lambda_x\kappa\left(y\right)} = \lambda_{\lambda_x\left(y\right)}$, for all $x,y\in X$, then $\kappa\in\K\left(X, r\right)$ and is $\rho$-invariant.
    \item If $(X, r)$ is right non-degenerate and it holds that $\rho_{\rho_x\kappa\left(y\right)} = \rho_{\rho_x\left(y\right)}$, for all $x, y \in X$, then $\kappa\in\K\left(X, r\right)$ and is $\rho$-invariant.
    \end{enumerate}
\begin{proof}
The first statement follows by \cref{reflec_invol}-$1.$. Moreover, since $(X, r)$ is involutive, by the assumptions, $\kappa$ is $\rho$-invariant. For the second one, it is enough to observe that $\lambda_x = \lambda_{\kappa\left(x\right)}$, for every $x\in X$. Thus,  by \cref{reflec_invol}-$2.$ and \cref{rem:obs}, $\kappa\in\K\left(X, r\right)$ and $\kappa$ is $\rho$-invariant.
\end{proof}
\end{prop}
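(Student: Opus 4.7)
The plan is to invoke \cref{reflec_invol} in each case, reducing verification of the relevant half of the reflection equation to a short rewrite after first promoting the hypothesis (via involutivity) to a suitable invariance property of $\kappa$.

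For part $1.$, I would start by exploiting the involutivity of the left non-degenerate solution $(X,r)$ to write $\rho_y(x) = \lambda_{\lambda_x(y)}^{-1}(x)$ for all $x,y\in X$. Substituting $\kappa(y)$ for $y$ and using the assumption $\lambda_{\lambda_x\kappa(y)} = \lambda_{\lambda_x(y)}$ would then give $\rho_{\kappa(y)}(x) = \rho_y(x)$, which is precisely the $\rho$-invariance of $\kappa$. Condition \eqref{t} next reads $\lambda_{\lambda_x(y)}\kappa\rho_y(x) = \lambda_{\lambda_x\kappa(y)}\kappa\rho_{\kappa(y)}(x)$, which is immediate from the two identities just established, and \cref{reflec_invol}-$1.$ yields $\kappa\in\K(X,r)$.

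For part $2.$, I would proceed dually. Involutivity together with right non-degeneracy gives $\lambda_x(y) = \rho_{\rho_y(x)}^{-1}(y)$; substituting $\kappa(x)$ for $x$ and applying the hypothesis $\rho_{\rho_y\kappa(x)} = \rho_{\rho_y(x)}$ (i.e.\ the stated assumption with $x$ and $y$ renamed) produces $\lambda_{\kappa(x)} = \lambda_x$, that is, $\kappa$ is $\lambda$-invariant. Feeding this together with the hypothesis into \cref{rem:obs}-$2.$ then upgrades $\lambda$-invariance to $\rho$-invariance. Finally, to verify \eqref{q}, namely $\rho_{\kappa\rho_{\kappa(y)}(x)}\lambda_x\kappa(y) = \kappa\rho_{\kappa\rho_y(x)}\lambda_x(y)$, I would use $\rho_{\kappa(a)} = \rho_a$ to erase each inner $\kappa$ from the subscripted $\rho$'s and then invoke the involutivity identity $\rho_{\rho_z(x)}\lambda_x(z) = z$ with $z = \kappa(y)$ on the left-hand side and with $z = y$ on the right-hand side; both sides collapse to $\kappa(y)$, so \cref{reflec_invol}-$2.$ concludes.

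No step presents a genuine obstacle: the proof is essentially bookkeeping, and the crucial observation is that under involutivity the stated hypothesis is already equivalent to one of the two invariance properties of $\kappa$, after which the reflection equation becomes trivial on both sides.
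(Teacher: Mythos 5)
Your proposal is correct and follows essentially the same route as the paper: in each case you use involutivity (via $\rho_y(x)=\lambda_{\lambda_x(y)}^{-1}(x)$, resp.\ $\lambda_x(y)=\rho_{\rho_y(x)}^{-1}(y)$) to convert the hypothesis into $\rho$-invariance (resp.\ $\lambda$-invariance, upgraded by \cref{rem:obs}), and then reduce to the single condition \eqref{t} or \eqref{q} via \cref{reflec_invol}. You have merely written out the computations the paper leaves implicit; nothing further is needed.
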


\medskip

\subsection{Some basics on shelf structures}
Let us start this section by recalling some basics on shelves and racks. For further details, we refer to \cite{FeSaFa04, FeRo92,  Le18, LeVe19}.

\smallskip

A \emph{left shelf} $(X, \triangleright)$ is a set $X$ equipped with a left self-distributive binary operation $\triangleright$ such that
\begin{equation}\label{self_distri}
    \forall\, x,y,z\in X\quad x \triangleright (y \triangleright z)= (x \triangleright y) \triangleright (x \triangleright z)\,. 
\end{equation} 

A \emph{shelf homomorphism} between two shelves is defined as any map preserving the shelf operations on each underlying set.
Hereafter, we will denote with $\End(X,\triangleright)$ and $\Aut(X,\triangleright)$ the endomorphism monoid and the automorphism group of a given shelf $(X,\triangleright)$, respectively.
If we denote with $L_x:X \to X, y \mapsto x \triangleright y$ the operator of left multiplication by $x$, for all $x \in X$, then equation \eqref{self_distri} is equivalent to requiring that $L_x$ is a shelf endomorphism.

A left shelf $(X, \triangleright)$ is a \emph{left rack} if all the maps $L_x$ are bijective. Moreover, a rack $(X, \triangleright)$ is a \emph{left quandle} if, in addition, $L_x(x) = x$, for all $x \in X$.

In a similar way, it is possible to define a \emph{right shelf} $(X, \triangleleft)$, and analogously a \emph{right rack} as a right shelf $(X, \triangleleft)$ having all the right multiplications $R_x$ bijective.
\smallskip

Below, we give some useful examples of left shelves and left racks, some of which are well-known in literature. Clearly, one can derive their right version.

\begin{exs}\label{ex_shelf}\hspace{1mm}
\begin{enumerate}
    \item If $X$ is a set and $a \in X$, set $x \triangleright y:=a$, for all $x, y \in X$, then $(X, \triangleright)$ is a left shelf.   
       \item Let $X$ be a set, $f: X \to X$ an idempotent map, and define $x \triangleright y:=f(x)$, for all $x, y \in X$, then $(X, \triangleright)$ is a left shelf. 
    \item Let us consider $(\mathbb{Z}_6, +)$ and define $x \triangleright y := 2x+2y$ $\left(\text{mod} \, \,6\right)$, for all $x, y \in \mathbb{Z}_6$. Then, $\left(\mathbb{Z}_6, \triangleright\right)$ is a left shelf, cf. \cite[Example 2.2]{DoRySt24x}.
\end{enumerate} 
\end{exs}

\begin{exs}\hspace{1mm} 
\begin{enumerate} 
\item (\textit{Permutation Rack}) Let $X$ be a set, $f \in \Sym_X$, and set $x \triangleright y:= f(y)$, for all $x, y \in X$. If $f=\id_X$, then $(X, \triangleright)$ is the \emph{trivial rack}.
    \item (\textit{Cyclic Rack of order n}) Let $n \in \mathbb{N}$ 
    and $X=\mathbb{Z}_n$. Define $x \triangleright y := y+1$ $\left(\text{mod} \, \, n\right)$, for all $x,y \in X$.
    Then, $(X,\triangleright)$ is a left rack which is a quandle if and only if $n=1$. 
    \item (\textit{Conjugation quandle}) Let $G$ be a group and define $x \triangleright y := x^{-1}yx$, for all $x,y \in G$. Then, the pair $(G,\triangleright)$ is a left quandle. More generally, any possible union of conjugacy classes of $G$ inherits the structure of a left quandle.
    \item (\textit{Dihedral quandle}) Let $n\in \mathbb{N}_0$ and $X =\Z_n$. Define a binary operation on $X$ by setting $i \triangleright j := 2i-j$ $\left(\text{mod} \, \, n\right)$, for all $i, j \in X$. Then, $(X, \triangleright)$ is a left quandle. In particular, the case $n = 0$ provides the infinite dihedral quandle.
\end{enumerate}  
\end{exs}

\smallskip

The following results explain the relationship between shelf structures and solutions that will be useful for our purposes.

\begin{prop}\label{YBE_and_shelf}
    Let $(X, \triangleright)$ be a left shelf.
    Then, the map $r_\triangleright:X\times X\to X\times X$ given by 
$r_\triangleright(x, y)=\left(y, y\triangleright x\right)$,
    for all $x, y \in X$, is a left non-degenerate solution on $X$, called the \emph{solution associated to the left shelf $(X, \triangleright)$}. \\Conversely, if $(X, r)$ is a left non-degenerate solution, set
    $$x \triangleright_r
    y:= \lambda_x\rho_{\lambda_y^{-1}(x)}(y),$$
    for all $x, y \in X$, then the structure $(X, \triangleright_r)$ is a left shelf, called the \emph{left shelf associated to the solution $(X, r)$}. The solution associated with $\left(X, \triangleright_r\right)$ is called the \emph{left derived solution} of $(X,r)$.
\end{prop}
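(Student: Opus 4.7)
The plan is to handle the two directions separately. The first direction is essentially a direct verification that reduces the three YBE coordinates Y1, Y2, Y3 to trivial identities plus self-distributivity; the second direction requires a computation that turns self-distributivity of $\triangleright_r$ into a consequence of Y1, Y2, Y3 combined with the invertibility of each $\lambda_x$.

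For the forward direction, I would read off from $r_{\triangleright}(x,y)=(y,\,y\triangleright x)$ that the associated maps are $\lambda^{\triangleright}_x=\id_X$ and $\rho^{\triangleright}_y(x)=y\triangleright x$. Left non-degeneracy is immediate since every $\lambda^{\triangleright}_x$ is the identity. Substituting into the three Yang--Baxter coordinates, Y1 reduces to $z=z$; Y2 collapses to $z\triangleright y = z\triangleright y$ because every $\lambda^{\triangleright}_{-}$ is trivial; and Y3 becomes exactly
\[
z\triangleright(y\triangleright x)=(z\triangleright y)\triangleright(z\triangleright x),
\]
which is the self-distributivity axiom \eqref{self_distri} assumed for $(X,\triangleright)$. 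So the forward direction amounts to recognizing which YBE equation carries the content.

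For the converse, the task is to prove that $\triangleright_r$ defined by $x\triangleright_r y := \lambda_x\rho_{\lambda_y^{-1}(x)}(y)$ satisfies \eqref{self_distri}. My plan is to manipulate this expression using Y1--Y3 so that both sides of the self-distributivity identity match. Concretely, I would first use left non-degeneracy to rewrite the definition in the equivalent form: if $r(\lambda_y^{-1}(x),y)=(x,\rho_{\lambda_y^{-1}(x)}(y))$, then $x\triangleright_r y = \lambda_x(\rho_{\lambda_y^{-1}(x)}(y))$, so applying $\lambda_x^{-1}$ on the left trades $\triangleright_r$ for a $\rho$-expression attached to a point determined by $\lambda$. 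Substituting this reformulation into $x\triangleright_r(y\triangleright_r z)$ and into $(x\triangleright_r y)\triangleright_r(x\triangleright_r z)$, Y1 is used to relocate compositions of $\lambda$'s, Y2 reshuffles mixed $\lambda\rho$ terms, and Y3 matches the nested $\rho$'s. A cleaner repackaging of the same calculation is to introduce the bijection $T:X\times X\to X\times X$, $T(x,y):=(x,\lambda_x^{-1}(y))$, observe by direct calculation that $T^{-1}rT$ has the form $(x,y)\mapsto (y,\,y\triangleright_r x)$, and then translate YBE for $r$ into the YBE coordinates for the transformed map, whose Y3 is exactly self-distributivity of $\triangleright_r$.

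The main obstacle is the bookkeeping in the converse: the formula for $\triangleright_r$ nests $\lambda$ and $\rho$ with arguments depending on $\lambda_y^{-1}$, so expanding both sides of self-distributivity produces expressions involving iterated evaluations of $\lambda^{\pm 1}$ and $\rho$ that must be reshuffled by Y1, Y2, Y3 in the right order. The $T$-conjugation trick is the most efficient way to organize these cancellations, but one still has to verify explicitly that the image of $r$ under conjugation by $T$ equals $(x,y)\mapsto(y,y\triangleright_r x)$, and this computation is where the left non-degeneracy enters essentially.
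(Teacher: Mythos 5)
Your forward direction is correct and complete: with $\lambda_x=\id_X$ and $\rho_y=L_y$, conditions \eqref{first} and \eqref{second} are vacuous and \eqref{third} is exactly \eqref{self_distri}, so nothing more is needed there. (For reference, the paper states this proposition as a recalled result from the literature and gives no proof of its own, so the comparison is only with the standard arguments.)

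The converse, however, has a genuine gap, and it sits precisely in the step you call the ``cleaner repackaging.'' It is true that $T(x,y)=(x,\lambda_x^{-1}(y))$ satisfies $T^{-1}rT(x,y)=(y,\,y\triangleright_r x)$, but the Yang--Baxter equation is an identity on $X^3$, and conjugating $r$ by a bijection of $X\times X$ that is \emph{not} of the product form $\varphi\times\varphi$ does not transport the YBE: writing $s=T^{-1}rT$, one has $s\times\id_X=(T\times\id_X)^{-1}(r\times\id_X)(T\times\id_X)$ and $\id_X\times s=(\id_X\times T)^{-1}(\id_X\times r)(\id_X\times T)$ with two \emph{different} conjugators, so the three-fold products do not telescope and ``translate YBE for $r$ into YBE for the transformed map'' is unjustified. (Compare \cref{iso:refl}, where equivalence of solutions is deliberately restricted to maps of the form $\varphi\times\varphi$.) The standard repair is to extend the twist to three strands, e.g.\ $\Omega(x,y,z)=(x,\lambda_x(y),\lambda_x\lambda_y(z))$, and to \emph{verify} the two intertwining identities $\Omega(r\times\id_X)\Omega^{-1}=s\times\id_X$ and $\Omega(\id_X\times r)\Omega^{-1}=\id_X\times s$; these verifications use \eqref{first}, \eqref{second} and the bijectivity of the $\lambda_x$ in an essential way, and they are where the entire content of the converse lives. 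Your alternative route (direct expansion of $x\triangleright_r(y\triangleright_r z)=(x\triangleright_r y)\triangleright_r(x\triangleright_r z)$ using \eqref{first}--\eqref{third}) would also work, but as written it is only a one-sentence plan with no computation, so neither branch of your converse currently constitutes a proof.
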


The correspondence between right non-degenerate solutions and right shelves is analogous and easily obtained by exchanging the roles of the maps $\lambda$ and $\rho$.
Specifically, if $(X,r)$ is right non-degenerate, then its associated right shelf $(X,\triangleleft_r)$ is defined by setting 
$$
\forall\ x,y \in X \qquad x \triangleleft_r y := \rho_y\lambda_{\rho^{-1}_x(y)}(x).
$$

\medskip

\begin{prop}
     Let $(X, \triangleright)$ be a shelf. Then, the solution $\left(X, r_\triangleright\right)$ is non-degenerate if and only if $(X, \triangleright)$ is a rack.
\end{prop}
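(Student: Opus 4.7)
The plan is to read off the $\lambda$ and $\rho$ maps of the solution $r_\triangleright$ directly from its definition, and then observe that non-degeneracy reduces, on the nose, to the rack axiom.

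First, by \cref{YBE_and_shelf}, the associated solution is $r_\triangleright(x,y)=(y, y\triangleright x)$. Comparing with the standard form $r(x,y)=(\lambda_x(y),\rho_y(x))$, I would record the two identifications
\begin{equation*}
\lambda_x = \id_X \qquad \text{and} \qquad \rho_y = L_y
\end{equation*}
for every $x,y\in X$, where $L_y$ is the operator of left multiplication by $y$ in the shelf $(X,\triangleright)$.

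From the first identification, $\lambda_x\in\Sym_X$ trivially for every $x\in X$, so $(X,r_\triangleright)$ is always left non-degenerate (a fact that already appears implicitly in \cref{YBE_and_shelf}). Therefore, non-degeneracy of $(X,r_\triangleright)$ is equivalent to right non-degeneracy, i.e.\ to the requirement that $\rho_y\in\Sym_X$ for every $y\in X$. By the second identification this is exactly the requirement that every left multiplication $L_y$ be bijective, which is precisely the definition of a left rack. This proves both implications simultaneously.

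There is essentially no obstacle here: the statement is a direct unpacking of definitions once one notices that in $r_\triangleright$ the map $\lambda$ collapses to the identity and $\rho$ coincides with the shelf's left multiplication operator. The only care needed is not to confuse the indices of $\lambda$ and $\rho$ when reading them off from $r_\triangleright(x,y)=(y,y\triangleright x)$.
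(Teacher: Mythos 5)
Your proof is correct and is exactly the direct unpacking one expects: the paper states this proposition without proof precisely because $\lambda_x=\id_X$ and $\rho_y=L_y$ make it immediate. Your identification of the maps and the reduction of non-degeneracy to bijectivity of all $L_y$ (the rack condition) is the intended argument.
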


\smallskip

According to \cite[Theorem 2.15]{DoRySt24x}, it is possible to describe any left (or right) non-degenerate solution in terms of its associated shelf. We recall such a result in the direction useful for our purposes.

\begin{theor}\label{sol:lnd}\hspace{1mm}
\begin{enumerate}
    \item     Let $(X, r)$ be a left non-degenerate solution and let $(X,\triangleright_r)$ its associated shelf.
    Then,  $\lambda_x\in\Aut\left(X,\triangleright_r\right)$ and $\rho_y\left(x\right) = \lambda^{-1}_{\lambda_x\left(y\right)}\left(\lambda_x\left(y\right)\triangleright_r x\right)$,
    for all $x,y\in X$. 
    \item Let $(X, r)$ be a right non-degenerate solution and let $(X,\triangleleft_r)$ its associated shelf.
    Then,  $\rho_x\in\Aut\left(X,\triangleleft_r\right)$ and $\lambda_x\left(y\right) = \rho^{-1}_{\rho_y\left(x\right)}\left(y \triangleleft_r \rho_y(x)\right)$,
    for all $x,y\in X$. 
\end{enumerate}
\end{theor}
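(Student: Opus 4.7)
The plan is to prove part $1.$ directly from the Yang-Baxter relations \eqref{first}--\eqref{third} and to obtain part $2.$ by the evident left-right symmetry of the statement, so only the left non-degenerate case needs genuine work. I would first dispose of the explicit formula for $\rho_y(x)$, which is essentially tautological: by the definition of $\triangleright_r$,
\[
\lambda_x(y) \triangleright_r x
= \lambda_{\lambda_x(y)} \, \rho_{\lambda_x^{-1}(\lambda_x(y))}(x)
= \lambda_{\lambda_x(y)} \, \rho_y(x),
\]
so applying $\lambda_{\lambda_x(y)}^{-1}$ to both sides yields the asserted identity.

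The substantive content is the claim $\lambda_x \in \Aut(X, \triangleright_r)$. Since left non-degeneracy already makes $\lambda_x$ a bijection, it suffices to check that $\lambda_x$ preserves $\triangleright_r$, that is,
\[
\lambda_x(y \triangleright_r z) = \lambda_x(y) \triangleright_r \lambda_x(z)
\]
for all $x,y,z \in X$. The plan is to expand the left-hand side as $\lambda_x \lambda_y \rho_{\lambda_z^{-1}(y)}(z)$ and use \eqref{first} to rewrite it as $\lambda_{\lambda_x(y)} \lambda_{\rho_y(x)} \rho_{\lambda_z^{-1}(y)}(z)$, while the right-hand side expands to $\lambda_{\lambda_x(y)} \rho_{\lambda_{\lambda_x(z)}^{-1}(\lambda_x(y))}(\lambda_x(z))$. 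After cancelling the outer $\lambda_{\lambda_x(y)}$, what remains is an instance of \eqref{second} applied with its second variable set to $z$ and its third variable set to $\lambda_z^{-1}(y)$; a second use of \eqref{first} in the form $\lambda_{\lambda_x(z)} \lambda_{\rho_z(x)} = \lambda_x \lambda_z$ identifies the subscript $\lambda_{\rho_z(x)}(\lambda_z^{-1}(y))$ produced by \eqref{second} with $\lambda_{\lambda_x(z)}^{-1}(\lambda_x(y))$, closing the argument.

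The hard part will be exactly this bookkeeping: \eqref{second} must be invoked at the correct triple so that both subscripts match, and \eqref{first} has to be applied once in the forward direction to split the outer composition $\lambda_x \lambda_y$ and once effectively in reverse to rewrite the inner subscript attached to $\rho$. Part $2.$ then follows by the parallel argument with the roles of $\lambda$ and $\rho$ interchanged and \eqref{first} replaced by \eqref{third}; the formula for $\lambda_x(y)$ in that setting again reduces to the definition of $\triangleleft_r$ by the same one-line computation.
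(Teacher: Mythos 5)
Your argument is correct: the identity for $\rho_y(x)$ is indeed immediate from the definition of $\triangleright_r$, and your verification that $\lambda_x$ preserves $\triangleright_r$ — expanding both sides, cancelling the outer $\lambda_{\lambda_x(y)}$, invoking \eqref{second} at the triple $(x,z,\lambda_z^{-1}(y))$, and using \eqref{first} once forwards and once in inverted form to match the remaining subscript — checks out, as does the left-right symmetry reducing part $2.$ to part $1.$ Note that the paper itself offers no proof here: it recalls this statement from an external reference (Theorem 2.15 of the cited work of Doikou, Rybo{\l}owicz and Stefanelli), so your direct computation from \eqref{first} and \eqref{second} supplies exactly the standard verification that the paper leaves to that source.
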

Let us observe that, in particular, any left non-degenerate involutive solution on a set $X$ is the solution associated with the trivial rack.

\medskip

It turns out to be useful to introduce the following group associated with a left rack structure. Similarly, one can define it for a right rack.
\begin{defin}
Let $(X,\triangleright)$ be a left rack.    
The \emph{left multiplication group} of $(X,\triangleright)$ is the normal subgroup of $\Aut(X)$ defined by
    \begin{align*}
        \LMlt(X,\triangleright) := \langle L_x \mid x \in X \rangle\,.
    \end{align*}
\end{defin}
\noindent  This group is an important invariant of a rack and computing it, in general, is a fundamental task in the theory (for example, see \cite{FeRo92, ElMaRe12}).

\medskip

\section{Reflections for left non-degenerate solutions}
In this section, we focus on reflections for left non-degenerate solutions and, consequently, on left shelf structures. Specifically, we describe $\lambda$-centralizing reflections for this class of solutions, i.e., reflections commuting with all the maps $\lambda_x$.
\medskip

\begin{theor}\label{prop:refl-lnd}
    Let $(X,r)$ be a left non-degenerate solution, $(X, \triangleright_r)$ its associated shelf, and $\kappa:X \to X$ a $\lambda$-centralizing map.
    Then, $\kappa \in \K(X,r)$ if and only if, for all $x,y \in X$, the following hold:
    \begin{enumerate}
        \item\label{eq:1} $\kappa L_x = \kappa L_{\kappa\left(x\right)}$,
        \item\label{eq:2} $\kappa L_{L_x(y)}(x) = L_{\kappa L_x(y)}\kappa(x)$\,.
    \end{enumerate}
    \begin{proof}
    Initially, using \cref{sol:lnd}, observe that, by an easy computation, the equation in \ref{eq:1}. is equivalent to \eqref{t}.
   Now, notice that
    \begin{align*}
        \kappa q\left(x,y\right) = \lambda^{-1}_{t\left(x,y\right)} \kappa L_{t\left(x,y\right)} \lambda_x\left(y\right)
        \quad\text{and}\quad
        q\left(x,\kappa\left(y\right)\right) =     \lambda^{-1}_{t\left(x,\kappa\left(y\right)\right)}L_{t\left(x,\kappa\left(y\right)\right)} \lambda_x\kappa\left(y\right),
    \end{align*}
    for all $x,y\in X$.
    Thus, if $\kappa$ is a reflection for $(X, r)$, we get
    \begin{align*}
        \forall\,x,y\in X\quad \kappa q(x,y) = q(x,\kappa(y))&\Leftrightarrow
        \forall\,x,y\in X\quad \kappa L_{t\left(x,y\right)} \lambda_x\left(y\right) = L_{t\left(x,y\right)} \lambda_x\kappa\left(y\right)\\
        &\Leftrightarrow
        \forall\,x,y\in X\quad \kappa L_{\kappa L_{\lambda_x(y)}(x)} \lambda_x\left(y\right) = L_{\kappa L_{\lambda_x(y)}(x)} \lambda_x\kappa\left(y\right) \\
        &\underset{\eqref{eq:1}}{\Leftrightarrow} 
        \forall\,x,y\in X\quad \kappa L_{L_{\lambda_x(y)}(x)} \lambda_x\left(y\right) = L_{\kappa L_{\lambda_x(y)}(x)}\kappa\lambda_x\left(y\right),
    \end{align*}
    i.e., \ref{eq:2}. holds.  Conversely, if \ref{eq:1}. and \ref{eq:2}. hold then, by following the previous chain of equivalences we also obtain \eqref{q}. 
    \end{proof}   
\end{theor}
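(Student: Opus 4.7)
The plan is to invoke \cref{lemma_rifl_qt}, which characterises $\kappa \in \K(X,r)$ via the pair of identities \eqref{t} and \eqref{q}, and then translate both into statements purely about the shelf operation $\triangleright_r$ (equivalently, the left multiplications $L_z$). The bridge between the two languages is \cref{sol:lnd}, which rewrites $\rho_y(x) = \lambda^{-1}_{\lambda_x(y)} L_{\lambda_x(y)}(x)$. Together with the $\lambda$-centralising hypothesis $\lambda_z \kappa = \kappa \lambda_z$, this should let me clear every occurrence of $\rho$ from the definitions of $t$ and $q$.

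First I would compute $t(x,y) = \lambda_{\lambda_x(y)} \kappa \rho_y(x)$: substituting the formula from \cref{sol:lnd} and sliding $\kappa$ past $\lambda^{-1}_{\lambda_x(y)}$ causes the outer $\lambda_{\lambda_x(y)}$ to cancel its inverse, leaving $t(x,y) = \kappa L_{\lambda_x(y)}(x)$. Thus \eqref{t}, i.e.\ $t(x,y) = t(x,\kappa(y))$, becomes $\kappa L_{\lambda_x(y)}(x) = \kappa L_{\kappa\lambda_x(y)}(x)$, and since $\lambda_x$ is a bijection, the element $\lambda_x(y)$ sweeps over all of $X$, so this is precisely condition \ref{eq:1}. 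Next, observing that the subscript $\kappa\rho_y(x)$ inside the outer $\rho$ of $q(x,y)$ is exactly what becomes $t(x,y)$ after pushing it through an outer $\lambda_{\lambda_x(y)}$, I would apply \cref{sol:lnd} once more to obtain $q(x,y) = \lambda^{-1}_{t(x,y)} L_{t(x,y)} \lambda_x(y)$. Passing $\kappa$ through $\lambda^{-1}_{t(x,y)}$ on the left gives $\kappa q(x,y) = \lambda^{-1}_{t(x,y)} \kappa L_{t(x,y)} \lambda_x(y)$, and the same rewriting applied to $q(x,\kappa(y))$ yields $\lambda^{-1}_{t(x,\kappa(y))} L_{t(x,\kappa(y))} \kappa \lambda_x(y)$.

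Now, under condition \ref{eq:1} (equivalently \eqref{t}), the two $\lambda^{-1}$ factors agree and cancel, so \eqref{q} reduces to $\kappa L_{t(x,y)} \lambda_x(y) = L_{t(x,y)} \kappa \lambda_x(y)$. Substituting $t(x,y) = \kappa L_{\lambda_x(y)}(x)$ on both sides, and using condition \ref{eq:1} a second time on the left to replace $\kappa L_{\kappa L_{\lambda_x(y)}(x)}$ by $\kappa L_{L_{\lambda_x(y)}(x)}$, produces exactly the identity in \ref{eq:2} after relabelling $\lambda_x(y)\mapsto y$ via bijectivity of $\lambda_x$. The converse follows by reading the same chain backwards, provided I have taken care to present each manipulation as a genuine equivalence. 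The only real obstacle is the bookkeeping: tracking where each $\kappa$ slides past a $\lambda_z^{\pm 1}$, and identifying the two separate places where condition \ref{eq:1} is needed to reduce \eqref{q} to \ref{eq:2}.
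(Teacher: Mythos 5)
Your proposal is correct and follows essentially the same route as the paper's own proof: rewrite $\rho$ via \cref{sol:lnd}, use $\lambda$-centrality to reduce $t(x,y)$ to $\kappa L_{\lambda_x(y)}(x)$ (whence \eqref{t} becomes condition~1 by bijectivity of $\lambda_x$), express $q$ as $\lambda^{-1}_{t(x,y)}L_{t(x,y)}\lambda_x(y)$, and then, assuming condition~1, cancel the $\lambda^{-1}$ factors and apply condition~1 once more to turn \eqref{q} into condition~2. The only difference is that you make explicit a couple of steps the paper leaves as "an easy computation."
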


We remark that the previous result extends \cref{left}, since if $(X,r)$ is an involutive left non-degenerate solution then $(X,\triangleright_r)$ is the trivial rack.
\smallskip

\begin{cor}\label{cor:der-left}
Let $(X,r)$ be a left non-degenerate solution and let us assume that $\kappa$ is a $\lambda$-centralizing map for $(X, r)$. Then, $\kappa \in \K(X, r)$ if and only if $\kappa \in \K(X, r_{\triangleright})$. 
    \begin{proof}
        The claim follows by \cref{YBE_and_shelf} and \cref{prop:refl-lnd}.
    \end{proof}
\end{cor}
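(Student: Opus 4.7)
My plan is to apply \cref{prop:refl-lnd} to the two solutions $(X,r)$ and $(X,r_\triangleright)$ and compare the resulting conditions. The point is that the hypothesis of being $\lambda$-centralizing has very different force in the two cases, yet the characterizing conditions \ref{eq:1}.\ and \ref{eq:2}.\ collapse to the same pair of shelf-theoretic identities.

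First I would unpack the derived solution. By \cref{YBE_and_shelf}, $r_\triangleright(x,y)=(y,\,y\triangleright_r x)$, so writing $r_\triangleright(x,y)=(\lambda^{(\triangleright)}_x(y),\,\rho^{(\triangleright)}_y(x))$, we have $\lambda^{(\triangleright)}_x=\id_X$ and $\rho^{(\triangleright)}_y=L_y$ for every $x,y\in X$. In particular, $r_\triangleright$ is left non-degenerate, and every map $\kappa\colon X\to X$ is automatically $\lambda$-centralizing for $(X,r_\triangleright)$. Thus the hypothesis on $\kappa$ is trivially satisfied on the derived side.

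Next I would check that the shelf associated with $r_\triangleright$ coincides with $(X,\triangleright_r)$. Using the formula from \cref{YBE_and_shelf} applied to $r_\triangleright$,
\[
x\triangleright_{r_\triangleright} y \;=\; \lambda^{(\triangleright)}_x\,\rho^{(\triangleright)}_{(\lambda^{(\triangleright)}_y)^{-1}(x)}(y) \;=\; L_x(y) \;=\; x\triangleright_r y,
\]
so the two shelves, and hence their left multiplications $L_x$, agree.

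With these observations in hand, I would simply invoke \cref{prop:refl-lnd} twice. For $(X,r)$, the $\lambda$-centralizing hypothesis on $\kappa$ yields that $\kappa\in\K(X,r)$ is equivalent to conditions \ref{eq:1}.\ and \ref{eq:2}.\ expressed through the operators $L_x$ of $(X,\triangleright_r)$. For $(X,r_\triangleright)$, the same theorem (with the trivially satisfied $\lambda$-centralizing hypothesis) yields that $\kappa\in\K(X,r_\triangleright)$ is equivalent to the very same two conditions, since the underlying shelf is the same. The two sets of conditions coincide verbatim, giving the desired equivalence. There is no real obstacle here; the only thing to be careful about is not to confuse the maps $\lambda_x$, $\rho_y$ of $r$ with those of $r_\triangleright$, and to note that only the shelf operation $\triangleright_r$ enters the conditions of \cref{prop:refl-lnd} once the $\lambda$-centralizing hypothesis is imposed.
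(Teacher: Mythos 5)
Your proposal is correct and is essentially the paper's own argument, which simply cites \cref{YBE_and_shelf} and \cref{prop:refl-lnd}: you apply \cref{prop:refl-lnd} to both $(X,r)$ and $(X,r_\triangleright)$, note that the $\lambda$-centralizing hypothesis is automatic for the derived solution, and verify that both solutions share the shelf $(X,\triangleright_r)$, so the characterizing conditions coincide. The only added value is that you spell out the verification $\triangleright_{r_\triangleright}=\triangleright_r$, which the paper leaves implicit.
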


Notice that property $2.$ of  \cref{prop:refl-lnd} might suggest that a $\lambda$-centralizing reflection $\kappa$ of a left non-degenerate solution $(X, r)$ is an endomorphism of its associated rack $\left(X, \triangleright_r\right)$. However, in general, this is not the case, as one can verify in the following example.

\begin{ex}
If we consider the shelf $\left(X,\triangleright\right)$  in \cref{ex_shelf}-2. on $X= \{1,\,2,\,3\}$ with $f:X\to X$ given by $f= 122$, then the map $\kappa:X\to X$ defined by $\kappa = 112$ is a reflection that is not a shelf endomorphism of $\left(X,\triangleright\right)$.
\end{ex}

Therefore, one question that arises is whether left multiplications are reflections for a solution associated with a shelf structure.

\begin{prop}
    \label{moltipl_rifl_shelf} Let $(X,\triangleright)$ be a left shelf, $(X,r_{\triangleright})$ its associated solution, and $a\in X$. Then, the following conditions are equivalent:
    \begin{enumerate}
        \item $L_a \in \K(X,r_\triangleright)$;
        \item $L_aL_x = L_aL_{L_a(x)}$, for every $x\in X$;
        \item $ L_{L_a(x)}L_a= L_aL_{L_a(x)}$, for every $x\in X$.
    \end{enumerate}
\begin{proof}
 It directly follows from \cref{prop:refl-lnd} and the self-distributivity property \eqref{self_distri}. 
\end{proof}
\end{prop}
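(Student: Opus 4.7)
The plan is to specialize \cref{prop:refl-lnd} to the case $\kappa = L_a$ and then exploit self-distributivity \eqref{self_distri}. First I would record the structural data for the solution $r_\triangleright$ associated with the shelf $(X,\triangleright)$: since $r_\triangleright(x,y) = (y, y \triangleright x)$, one has $\lambda_x = \id_X$ and $\rho_y = L_y$ for every $x,y \in X$. Consequently every self-map of $X$ is trivially $\lambda$-centralizing, so \cref{prop:refl-lnd} applies to $\kappa = L_a$ without any extra hypothesis.

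Next, I would substitute $\kappa = L_a$ into the two defining conditions of \cref{prop:refl-lnd}. Its first condition $\kappa L_x = \kappa L_{\kappa(x)}$ becomes verbatim $L_a L_x = L_a L_{L_a(x)}$, which is item $2.$ of the statement. Its second condition $\kappa L_{L_x(y)}(x) = L_{\kappa L_x(y)}\kappa(x)$ becomes $L_a L_{L_x(y)}(x) = L_{L_a L_x(y)} L_a(x)$, and this is precisely the self-distributivity identity applied to the pair $(a, L_x(y))$ and evaluated at $x$. Hence the second condition of \cref{prop:refl-lnd} is automatically satisfied for every $a\in X$, and $L_a \in \K(X, r_\triangleright)$ is equivalent to item $2.$

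Finally, the equivalence of $2.$ and $3.$ follows by rewriting the left-hand side of $2.$ via self-distributivity in the operator form $L_a L_x = L_{L_a(x)} L_a$: this transforms $L_a L_x = L_a L_{L_a(x)}$ into $L_{L_a(x)} L_a = L_a L_{L_a(x)}$, and the reverse implication is identical. Since each step is either a direct substitution into \cref{prop:refl-lnd} or a single application of \eqref{self_distri}, there is essentially no obstacle. The only point that deserves a pause is the observation that condition $2.$ of \cref{prop:refl-lnd} is automatic when $\kappa = L_a$; this is what reduces the characterization of $L_a$ being a reflection to condition $1.$ alone and makes the statement a clean corollary of \cref{prop:refl-lnd}.
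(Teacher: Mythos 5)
Your proposal is correct and follows exactly the route the paper intends: specialize \cref{prop:refl-lnd} to $\kappa=L_a$ for the derived solution (where $\lambda_x=\id_X$, so the $\lambda$-centralizing hypothesis is vacuous and the associated shelf is $\triangleright$ itself), observe that its second condition is an instance of \eqref{self_distri} and hence automatic, and convert condition $2.$ into condition $3.$ via the operator form $L_aL_x=L_{L_a(x)}L_a$ of self-distributivity. No gaps.
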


\smallskip

The property \ref{eq:2}. of \cref{prop:refl-lnd} further simplifies if we assume that $L_x$ is bijective, for all $x \in X$, i.e., $(X,\,\triangleright_r)$ is a rack.
According to \cite[Corollary 2.16]{DoRySt24x}, this is the case of a left non-degenerate solution which is also bijective. 
\begin{cor}\label{cor:refl_bilnd}
    Let $(X,r)$ be a bijective left non-degenerate solution and $\kappa:X \to X$ a $\lambda$-centralizing map. Then, $\kappa \in \K(X,r)$ if and only if $\kappa \in \End\left(X, \triangleright_r\right)$ such that
\begin{align}\label{id-cor:refl_bilnd}
  \forall\, x \in X \quad \kappa L_x = \kappa L_{\kappa\left(x\right)}\,.  
\end{align}
In particular, the following hold:
\begin{enumerate}
    \item if $\kappa \in \mathcal{K}(X, r)$ is involutive, then $\kappa$ is an involutive automorphism of $\left(X,\,\triangleright_r\right)$.
    \item $\kappa \in \mathcal{K}(X, r)$ is idempotent, i.e., $\kappa^2=\kappa$, if and only if $\kappa$ is an idempotent of $\End\left(X,\, \triangleright_r\right)$;
\end{enumerate}
\begin{proof}
   The result is an easy consequence of \cref{prop:refl-lnd}.  
   Moreover, $1.$ is obvious.
   Finally, to prove $2.$, it is sufficient to observe that if we assume that $\kappa:X \to X$ is an idempotent endomorphism of $(X,\,\triangleright_r)$, then the following holds, for all $x \in X$:
    \[
        \kappa L_{\kappa(x)} = L_{\kappa^2(x)} \kappa =
        L_{\kappa(x)} \kappa = \kappa L_x \,,
    \]
    thus finishing the proof.
\end{proof}
\end{cor}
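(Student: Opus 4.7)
The plan is to derive the corollary directly from \cref{prop:refl-lnd}, exploiting the fact that in the bijective left non-degenerate setting every left multiplication $L_x$ on the associated shelf is a bijection. As a first step I would invoke \cite[Corollary 2.16]{DoRySt24x} to conclude that $(X, \triangleright_r)$ is actually a rack, so that each $L_x$ is invertible.

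Next, I would rewrite condition \ref{eq:2}. of \cref{prop:refl-lnd} in the presence of this bijectivity. For fixed $x\in X$, as $y$ varies over $X$ the element $z := L_x(y)$ sweeps out all of $X$. Therefore the requirement
$$\kappa L_{L_x(y)}(x) = L_{\kappa L_x(y)}\kappa(x)\quad \text{for all } x,y\in X$$
is equivalent to
$$\kappa L_z(x) = L_{\kappa(z)}\kappa(x)\quad \text{for all } x,z\in X,$$
that is, $\kappa(z \triangleright_r x) = \kappa(z) \triangleright_r \kappa(x)$. In other words, condition \ref{eq:2}. collapses exactly to the assertion that $\kappa\in\End(X,\triangleright_r)$. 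Combining this with condition \ref{eq:1}. yields the main equivalence \eqref{id-cor:refl_bilnd}.

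For item $1$, if $\kappa\in\K(X,r)$ is involutive, then from the main equivalence $\kappa\in\End(X,\triangleright_r)$, and $\kappa^2=\id_X$ forces $\kappa$ to be bijective with inverse itself, hence an involutive automorphism of $(X,\triangleright_r)$. For item $2$, the forward direction is immediate from the main equivalence. For the converse I would assume $\kappa$ is an idempotent endomorphism of $(X,\triangleright_r)$; the endomorphism condition is then automatic, and \eqref{id-cor:refl_bilnd} follows from a short computation using the endomorphism property and $\kappa^2=\kappa$, namely $\kappa L_{\kappa(x)} = L_{\kappa^2(x)}\kappa = L_{\kappa(x)}\kappa = \kappa L_x$. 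I do not foresee any real obstacle; the only point requiring a moment's thought is noticing that the apparently two-variable condition \ref{eq:2}. becomes a single-variable endomorphism condition precisely because of the invertibility of the $L_x$, and everything else is a mechanical verification.
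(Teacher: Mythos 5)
Your proposal is correct and follows essentially the same route as the paper: it deduces the main equivalence from \cref{prop:refl-lnd} by observing that bijectivity of the $L_x$ (via \cite[Corollary 2.16]{DoRySt24x}) lets the two-variable condition \ref{eq:2}. collapse to the endomorphism condition, and your computation $\kappa L_{\kappa(x)} = L_{\kappa^2(x)}\kappa = L_{\kappa(x)}\kappa = \kappa L_x$ for item $2.$ is exactly the one given in the paper. You have merely made explicit the substitution $z = L_x(y)$ that the paper leaves implicit in calling the result an ``easy consequence''.
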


Observe that, in general, the converse of \cref{cor:refl_bilnd}-$1.$ is not true (see \cref{red-contrex}).

\smallskip

\begin{rem}
    Reflection maps share an unexpected connection with averaging operators on racks. According to \cite{das24x}, if $(X,\,\triangleright)$ is a rack then a map $A:X \to X$ is a (left) \emph{averaging operator on $(X,\,\triangleright)$} if\,
    $A\left(x\right)\triangleright A\left(y\right) = A\left(A\left(x\right)\triangleright y\right)$, for all $x,y\in X$. Evidently, if $A\in\End(X,\,\triangleright)$, then $A$ is averaging operator on\, $(X,\,\triangleright)$ if and only if $A$ satisfies the identity \eqref{id-cor:refl_bilnd} in \cref{cor:refl_bilnd}.
\end{rem}


\begin{rem}
  Let $\kappa_1, \kappa_2$ be $\lambda$-centralizing reflections of a bijective left non-degenerate solution $(X,r)$.
  If $\kappa_1\kappa_2=\kappa_2\kappa_1$, then $\kappa_1\kappa_2$ is a reflection of $(X, r)$. The converse is not true since in \cref{ex_123} we have that $\kappa_2\kappa_3=\kappa_2$ and $\kappa_3\kappa_2=\kappa_4$.
  Moreover, in general, the commutation assumption can not be relaxed since for a given solution there can exist $\lambda$-centralizing reflections $\kappa$ and $\omega$ whose composition is not again a reflection (see \cref{exs_riflessioni}-3.).
\end{rem}

In the case of a non-degenerate left derived solution, \cref{prop:refl-lnd} allows to characterize all reflections only in terms of the associated rack.

\begin{cor}
    \label{refl_rack_char}
    Let $\left(X, \triangleright\right)$ be a left rack and $(X,r_{\triangleright})$ its associated solution. Then, a $\kappa \in \K(X,r_{\triangleright})$ if and only if $\kappa \in \End\left(X, \triangleright_r\right)$ such that
    \begin{center}
   $\forall\, x \in X \quad \kappa L_x = \kappa L_{\kappa\left(x\right)}.$     
    \end{center}
\end{cor}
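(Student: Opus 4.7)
My plan is to reduce the statement directly to \cref{cor:refl_bilnd}. The crucial observation is that for the solution $(X, r_\triangleright)$ associated with a left rack, the $\lambda$-components are trivial, so the $\lambda$-centralizing hypothesis in \cref{cor:refl_bilnd} becomes vacuous.

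First, I would unpack the definition of $r_\triangleright$: since $r_\triangleright(x,y) = (y, y \triangleright x)$, we have $\lambda_x(y) = y$ and $\rho_y(x) = y \triangleright x$, for all $x,y \in X$. In particular, $\lambda_x = \id_X$ for every $x \in X$, so \emph{any} map $\kappa : X \to X$ satisfies $\lambda_x \kappa = \kappa \lambda_x$, i.e., $\kappa$ is automatically $\lambda$-centralizing. Next, I would verify that the associated shelf $\triangleright_{r_\triangleright}$ coincides with the original operation $\triangleright$: using the formula from \cref{YBE_and_shelf}, one has
\[
x \triangleright_{r_\triangleright} y = \lambda_x\, \rho_{\lambda_y^{-1}(x)}(y) = \rho_x(y) = x \triangleright y,
\]
for all $x,y \in X$, where we used that $\lambda_y^{-1} = \id_X$. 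Hence the shelf associated with $(X, r_\triangleright)$ is precisely $(X, \triangleright)$ itself.

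Then I would invoke the preceding proposition stating that $(X, r_\triangleright)$ is non-degenerate if and only if $(X, \triangleright)$ is a rack; by hypothesis the latter holds, so $(X, r_\triangleright)$ is bijective (all $L_x$ are invertible) and left non-degenerate. Thus the assumptions of \cref{cor:refl_bilnd} are met: a $\lambda$-centralizing map $\kappa$ is a reflection for $(X, r_\triangleright)$ if and only if $\kappa \in \End(X, \triangleright_{r_\triangleright})$ and $\kappa L_x = \kappa L_{\kappa(x)}$ for every $x \in X$. Combining this with the two observations above -- that $\lambda$-centralizing is automatic and that $\triangleright_{r_\triangleright} = \triangleright$ -- gives exactly the desired characterization.

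There is essentially no technical obstacle, since the result is a clean specialization of \cref{cor:refl_bilnd}; the only thing to be careful about is making the two simplifications ($\lambda_x = \id_X$ and $\triangleright_{r_\triangleright} = \triangleright$) explicit so that the reader sees why the $\lambda$-centralizing hypothesis disappears in this setting.
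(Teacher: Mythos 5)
Your proof is correct and follows essentially the same route the paper intends: the corollary is stated as a direct specialization of \cref{cor:refl_bilnd} (via \cref{prop:refl-lnd}) to a derived solution, where $\lambda_x=\id_X$ makes the $\lambda$-centralizing hypothesis vacuous and $\triangleright_{r_\triangleright}=\triangleright$. Your explicit verification of these two simplifications, and of the bijectivity/non-degeneracy of $r_\triangleright$ coming from the rack axiom, is exactly the argument the paper leaves implicit.
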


\smallskip 

The \cref{refl_rack_char} allows us to easily find examples of reflections of the solutions associated with racks.
\begin{exs}\label{ex_refl_rack} Let $(X,\triangleright)$ be a left rack.
\begin{enumerate} 
    \item Any idempotent endomorphism of $(X,\triangleright)$ lies in $\K(X)$. 
    \item If $(X,\triangleright)$ is a quandle, any constant map with a constant value $c \in X$ is a reflection for $\left(X, r_\triangleright\right)$.
\item If $(X, \triangleright)$ is the cyclic rack, since one can easily deduce that $\End(X,\triangleright)
    = \LMlt(X,\triangleright)$, then $\K(X) = \LMlt(X,\triangleright)$. 
\end{enumerate}  
\end{exs}

Another consequence of the previous corollaries is that bijective reflections of a left rack allow us to obtain new solutions of the Yang-Baxter equation, as we explain in the following. 
\begin{prop}\label{soluzione_assoc_rack}
    Let $(X, \triangleright)$ be a left rack and let us consider $\kappa\in \Kbij(X, r_\triangleright)$. Then, the map
    $r_{\kappa}:X\times X\to X\times X$ defined by
    \begin{align*}
      r_{\kappa}\left(x,\,y\right)
        = \left(\kappa\left(y\right),\, \kappa^{-1}\left(y \triangleright x\right)\right),
    \end{align*}
    for all $x,y\in X$, is a bijective and non-degenerate solution on $X$. In addition, it holds that $\omega \in \K(X, r_\kappa)$ if and only if $\kappa\omega=\omega\kappa$.
    \begin{proof}
        The proof of the fact that $r_\kappa$ is a bijective and non-degenerate solution is a consequence of \cite[Theorem 2.15]{DoRySt24x} and \cref{refl_rack_char}.   
    Moreover, the rest of the claim follows from \cref{cor:refl_bilnd}.
    \end{proof}
\end{prop}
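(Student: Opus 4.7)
The plan is to establish the proposition in two steps: first verify that $r_\kappa$ is a bijective non-degenerate solution, then characterize its reflections. Before starting, I would extract two structural consequences of the hypothesis $\kappa\in\Kbij(X,r_\triangleright)$ using \cref{refl_rack_char}: that $\kappa$ is a rack automorphism of $(X,\triangleright)$, and that $\kappa L_x=\kappa L_{\kappa(x)}$ for every $x\in X$. Since $\kappa$ is bijective, the latter simplifies to $L_x=L_{\kappa(x)}$, and combining it with the endomorphism property yields the key commutation $\kappa L_x=L_x\kappa$ for every $x$.

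For the first step, observe that in $r_\kappa(x,y)=(\kappa(y),\kappa^{-1}L_y(x))$ the $\lambda$-component is the constant map $\kappa$ and the $\rho$-component is $\kappa^{-1}L_y$, so non-degeneracy is immediate, and bijectivity of $r_\kappa$ follows by solving $\kappa(y)=u$ and $y\triangleright x=\kappa(v)$ uniquely as $y=\kappa^{-1}(u)$, $x=L_y^{-1}\kappa(v)$. Equation \eqref{first} is trivial because the $\lambda$-component does not depend on its subscript, while \eqref{second} and \eqref{third} reduce to direct computations that invoke $\kappa L_x=L_x\kappa$, self-distributivity, and $L_x=L_{\kappa(x)}$ in a systematic way. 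Alternatively, following the sketch in the paper, one may apply \cite[Theorem 2.15]{DoRySt24x}: since $\{\kappa\}_{x\in X}$ is a constant family of rack automorphisms of $(X,\triangleright)$, only the compatibility formula expressing the $\rho$-part in terms of the shelf operation needs to be verified, and this follows from $L_{\kappa(y)}=L_y$.

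For the second step, I would first compute the shelf associated with $r_\kappa$: using $L_{\kappa^{-1}(x)}=L_x$ one gets $x\triangleright_{r_\kappa}y=\kappa\kappa^{-1}(\kappa^{-1}(x)\triangleright y)=x\triangleright y$, so the derived solutions of $r_\kappa$ and $r_\triangleright$ coincide. Since the $\lambda$-component of $r_\kappa$ equals $\kappa$ for every index, the $\lambda$-centralizing condition on a map $\omega$ is exactly $\omega\kappa=\kappa\omega$. Applying \cref{cor:refl_bilnd} to the bijective left non-degenerate solution $(X,r_\kappa)$, and using \cref{refl_rack_char} to identify $\K(X,r_\triangleright)$ with the shelf endomorphisms $\omega$ of $(X,\triangleright)$ satisfying $\omega L_x=\omega L_{\omega(x)}$, then produces the stated equivalence. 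I expect the main obstacle to be the combinatorial bookkeeping in checking \eqref{second} and \eqref{third}; once the two identities $\kappa L_x=L_x\kappa$ and $L_x=L_{\kappa(x)}$ are in place, all the remaining manipulations are mechanical consequences of these together with self-distributivity.
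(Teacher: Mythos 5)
Your treatment of the first assertion is correct and follows the paper's intended route: from \cref{refl_rack_char} you extract that $\kappa\in\End(X,\triangleright)$ with $\kappa L_x=\kappa L_{\kappa(x)}$, hence (by injectivity) $L_{\kappa(x)}=L_x$ and $\kappa L_x=L_x\kappa$, and with these two identities the verification that $r_\kappa$ is a bijective non-degenerate solution goes through, whether you check \eqref{first}--\eqref{third} by hand or invoke \cite[Theorem 2.15]{DoRySt24x} with the constant family of automorphisms $\lambda_x=\kappa$. The computation $\triangleright_{r_\kappa}=\triangleright$ is also correct and is exactly the observation the paper records right after the proposition.

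The second assertion is where your argument does not close. \cref{cor:refl_bilnd} is a statement about $\lambda$-centralizing maps only; applied to $(X,r_\kappa)$, whose $\lambda$-maps all equal $\kappa$, it yields: \emph{for $\omega$ already satisfying $\omega\kappa=\kappa\omega$}, one has $\omega\in\K(X,r_\kappa)$ if and only if $\omega\in\End(X,\triangleright)$ and $\omega L_x=\omega L_{\omega(x)}$, i.e.\ if and only if $\omega\in\K(X,r_\triangleright)$. That is a biconditional \emph{inside} the centralizer of $\kappa$; it neither shows that every reflection of $r_\kappa$ must commute with $\kappa$, nor that commuting with $\kappa$ by itself makes $\omega$ a reflection. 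Indeed, if you run \cref{lemma_rifl_qt} directly on $r_\kappa$ and use $L_{\kappa(x)}=L_x$ and $\kappa L_x=L_x\kappa$ to cancel every occurrence of $\kappa$, conditions \eqref{t} and \eqref{q} collapse to precisely the two conditions of \cref{refl_rack_char}, so one finds $\K(X,r_\kappa)=\K(X,r_\triangleright)$ with no reference to $\kappa$ at all. Concretely, for the trivial rack $x\triangleright y=y$ the map $r_\kappa(x,y)=(\kappa(y),\kappa^{-1}(x))$ is an involutive Lyubashenko solution, so by \cref{ex:fg} \emph{every} map is a reflection for it, including maps not commuting with $\kappa$ (so the ``only if'' direction fails); and for the dihedral quandle on $\Z_3$ with $\kappa=\id_X$, every map commutes with $\kappa$ but only four maps are reflections (so the ``if'' direction fails). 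Hence your final step, ``then produces the stated equivalence,'' is a genuine gap: the unrestricted equivalence does not follow from the two cited results. To be fair, the paper's own two-line proof cites exactly the same results and suffers from the same problem, so you have faithfully reconstructed the intended argument; but as a proof of the statement as literally written, it does not go through.
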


 \noindent Notice that the solution $(X, r_\kappa)$ in \cref{soluzione_assoc_rack} is equivalent to $(X,r_\triangleright)$ if and only if $\kappa=\id_X$. An easy calculation shows that $\triangleright = \triangleright_{r_\kappa}$ and therefore $r_{\triangleright}$ and $r_\kappa$ share the same left rack structure.
According to \cite[Lemma 2.25]{DoRySt24x}, it follows that $r_{\triangleright}$ and $r_\kappa$ are Drinfel'd-isomorphic, with the explicit Drinfel'd-isomorphism $\Phi:X \times X \to X \times X$ given by
$\Phi(x,y) = (x,\kappa(y))$, for all $x, y \in X$.
\medskip

The $\lambda$-centralizing reflections for bijective left non-degenerate solutions can be directly studied using the \emph{centralizer} of $\LMlt\left(X,\triangleright\right)$, as we will show in the result below. 
Hereafter, with $C_M(S)$ we will denote the centralizer in a monoid $M$ of a subset $S$.

\begin{theor}\label{bij_refl:bij_nondeg_sol}
    Let $(X,r)$ be a bijective left non-degenerate solution and $\kappa:X \to X$ a $\lambda$-centralizing map.
    Then the following hold:
    \begin{enumerate}
        \item If $\kappa \in C_{\End(X,\triangleright)}\left(\LMlt\left(X,\triangleright\right)\right)$, then $\kappa \in \K(X,r)$;
        \item If $\kappa \in \Kinj(X,r)$, then $\kappa \in C_{\End(X,\triangleright)}\left(\LMlt\left(X,\triangleright\right)\right)$;
        \item In particular, $\kappa \in \Kbij(X,r)$ if and only if $\kappa \in C_{\Aut(X,\triangleright)}\left(\LMlt\left(X,\triangleright\right)\right)$.
    \end{enumerate}
    \begin{proof}
        The first statement directly follows by \cref{cor:refl_bilnd}. Now, assume that $\kappa$ is an injective reflection.
        Again, from property $1.$ we can deduce that $L_x = L_{\kappa(x)}$, from which it follows that $L_x\kappa = L_{\kappa(x)}\kappa = \kappa L_x$, for all $x \in X$. Clearly, the last point of the assertion is a consequence of the former two.
    \end{proof}
\end{theor}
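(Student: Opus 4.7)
The plan is to reduce everything to Corollary \ref{cor:refl_bilnd}, which characterises a $\lambda$-centralizing reflection $\kappa$ of a bijective left non-degenerate solution $(X,r)$ as an element of $\End(X,\triangleright_r)$ satisfying $\kappa L_x = \kappa L_{\kappa(x)}$ for all $x\in X$. The key observation I will exploit is that the endomorphism identity can be rewritten as $\kappa L_x = L_{\kappa(x)}\kappa$, so the three statements all amount to comparing the right-action of $\kappa$ on $L_x$ with a commutation relation.

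For part (1), suppose $\kappa\in C_{\End(X,\triangleright)}(\LMlt(X,\triangleright))$. I will use the two pieces of information in parallel: being an endomorphism gives $\kappa L_x = L_{\kappa(x)}\kappa$, while being in the centraliser, applied to the element $L_{\kappa(x)}\in\LMlt(X,\triangleright)$, gives $\kappa L_{\kappa(x)} = L_{\kappa(x)}\kappa$. Comparing the right-hand sides immediately yields $\kappa L_x = \kappa L_{\kappa(x)}$, which is exactly the condition required by Corollary \ref{cor:refl_bilnd}. For part (2), assume $\kappa\in\Kinj(X,r)$; by the same corollary, $\kappa\in\End(X,\triangleright_r)$ and $\kappa L_x = \kappa L_{\kappa(x)}$. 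Since $\kappa$ is injective as a map of sets, left-cancellation forces $L_x = L_{\kappa(x)}$. Plugging this back into the endomorphism identity gives $\kappa L_x = L_{\kappa(x)}\kappa = L_x\kappa$, so $\kappa$ commutes with every generator of $\LMlt(X,\triangleright)$ and therefore with the whole group; combined with $\kappa\in\End(X,\triangleright_r)$, this places $\kappa$ in the claimed centraliser.

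For part (3), the implication $\Leftarrow$ follows from part (1), since elements of $\Aut(X,\triangleright_r)$ are bijections of $X$ and thus yield elements of $\Kbij(X,r)$. For $\Rightarrow$, bijectivity entails injectivity, so part (2) gives $\kappa\in C_{\End(X,\triangleright_r)}(\LMlt(X,\triangleright_r))$; I will then invoke the standard fact that a bijective shelf homomorphism is automatically an isomorphism (the set-theoretic inverse is forced to preserve $\triangleright_r$ by applying the defining identity to preimages), upgrading the containment to $C_{\Aut(X,\triangleright_r)}(\LMlt(X,\triangleright_r))$.

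The main obstacle is not technical but organisational: one has to read the endomorphism property in two directions, once as $\kappa L_x = L_{\kappa(x)}\kappa$ (used to derive the reflection condition in part (1) and the commutation in part (2)) and once as a substitution tool via $L_x = L_{\kappa(x)}$ under injectivity. Beyond that, part (3) needs only the remark that bijective rack endomorphisms are automorphisms, which is routine.
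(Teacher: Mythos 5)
Your proof is correct and follows the same route as the paper: both reduce everything to \cref{cor:refl_bilnd}, use injectivity to cancel $\kappa$ in $\kappa L_x=\kappa L_{\kappa(x)}$ and obtain $L_x=L_{\kappa(x)}$, and hence the commutation with $\LMlt(X,\triangleright_r)$. You merely spell out two details the paper leaves implicit (reading the endomorphism identity as $\kappa L_x=L_{\kappa(x)}\kappa$ in part (1), and that a bijective shelf endomorphism is an automorphism in part (3)), which is fine.
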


\begin{cor}\label{centraliz}
   If $(X,\triangleright)$ is a left rack, then
   \begin{align*}  C_{\End(X,\triangleright)}\left(\LMlt\left(X,\triangleright\right)\right) \subseteq \K(X)\,         \quad\text{and}\quad
        \Kinj(X)\subseteq C_{\End(X,\triangleright)}\left(\LMlt\left(X,\triangleright\right)\right).
   \end{align*}
   In particular, $C_{\Aut(X,\triangleright)}\left(\LMlt\left(X,\triangleright\right)\right)  = \Kbij(X)$.
\end{cor}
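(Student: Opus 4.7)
The plan is to deduce this corollary directly from \cref{bij_refl:bij_nondeg_sol} applied to the specific solution $(X, r_\triangleright)$, exploiting the fact that for such a solution the $\lambda$-centralizing hypothesis becomes vacuous.

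First, I would recall that for a left rack $(X, \triangleright)$, the associated solution has the form $r_\triangleright(x,y) = (y, y \triangleright x)$, so that $\lambda_x = \id_X$ and $\rho_y = L_y$, for every $x,y \in X$. Consequently \emph{every} map $\kappa:X\to X$ trivially commutes with all the $\lambda_x$, i.e., is $\lambda$-centralizing for $(X,r_\triangleright)$. Moreover, since $(X,\triangleright)$ is a rack, the solution $(X,r_\triangleright)$ is bijective (its inverse is $(a,b) \mapsto (L_a^{-1}(b), a)$) and left non-degenerate, so the hypotheses of \cref{bij_refl:bij_nondeg_sol} are satisfied. One also verifies immediately that $\triangleright_{r_\triangleright} = \triangleright$, so the shelf produced by the construction in \cref{YBE_and_shelf} coincides with the starting rack.

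With these observations, the three conclusions of \cref{bij_refl:bij_nondeg_sol} translate verbatim into the three statements of the corollary: part $1.$ of the theorem yields the inclusion $C_{\End(X,\triangleright)}(\LMlt(X,\triangleright)) \subseteq \K(X)$; part $2.$ gives $\Kinj(X) \subseteq C_{\End(X,\triangleright)}(\LMlt(X,\triangleright))$; and part $3.$ provides the equality $\Kbij(X) = C_{\Aut(X,\triangleright)}(\LMlt(X,\triangleright))$.

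There is essentially no hard step here — the work has already been done in \cref{prop:refl-lnd} and \cref{bij_refl:bij_nondeg_sol}, and the only substantive remark is the triviality of the $\lambda$-centralizing hypothesis in the derived setting. The one thing worth double-checking is the identification $\triangleright_{r_\triangleright} = \triangleright$, which is a direct computation using $\lambda_x = \id_X$ in the formula $x \triangleright_r y = \lambda_x \rho_{\lambda_y^{-1}(x)}(y)$ of \cref{YBE_and_shelf}, yielding $x \triangleright_{r_\triangleright} y = \rho_x(y) = x \triangleright y$, so that the targets of the centralizer operations in \cref{bij_refl:bij_nondeg_sol} indeed match those in the statement.
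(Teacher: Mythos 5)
Your proposal is correct and matches the paper's intended derivation: the corollary is stated as an immediate specialization of \cref{bij_refl:bij_nondeg_sol} to the derived solution $(X,r_\triangleright)$ of a left rack, where $\lambda_x=\id_X$ makes the $\lambda$-centralizing hypothesis vacuous and $\triangleright_{r_\triangleright}=\triangleright$. The observations you flag (bijectivity of $r_\triangleright$ for a rack and the identification of the associated shelf with the original rack) are exactly the points that make the translation verbatim, so nothing further is needed.
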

\smallskip

As an easy further consequence, we state the following result.
\begin{cor}\label{Lx-commuta}
    Let $(X,\triangleright)$ be a left rack and $a \in X$. Then, the map $L_a \in \K(X)$
    if and only if $L_a\in \text{Z}\left(\LMlt\left(X,\triangleright\right)\right)$, the center of $\LMlt(X, \triangleright)$.
\end{cor}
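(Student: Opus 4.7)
The plan is to deduce the statement directly from Corollary \ref{centraliz} by exploiting the fact that, in a rack, every left multiplication $L_a$ is itself an element of the left multiplication group $\LMlt(X,\triangleright)$.

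First, I would note that since $(X,\triangleright)$ is a rack, the map $L_a$ is an automorphism of $(X,\triangleright)$ and in particular a bijection, so the membership $L_a\in\K(X)$ is the same as $L_a\in\Kbij(X)$. By the last assertion of Corollary \ref{centraliz} (equivalently, the third item of Theorem \ref{bij_refl:bij_nondeg_sol} applied to the derived solution $r_\triangleright$, for which $\lambda_x=L_x$ so the $\lambda$-centralizing hypothesis is automatic there), we have the equality
\[
\Kbij(X) = C_{\Aut(X,\triangleright)}\bigl(\LMlt(X,\triangleright)\bigr).
\]
Thus $L_a\in\K(X)$ if and only if $L_a$ commutes, inside $\Aut(X,\triangleright)$, with every element of $\LMlt(X,\triangleright)$.

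The second, and final, step is the elementary observation that $L_a\in\LMlt(X,\triangleright)$ by definition. Consequently, the condition of centralizing $\LMlt(X,\triangleright)$ from within $\Aut(X,\triangleright)$ is equivalent to lying in the center $Z(\LMlt(X,\triangleright))$: indeed, for an element already belonging to the group $\LMlt(X,\triangleright)$, commuting with every generator $L_x$ is exactly the defining property of centrality. Combining the two steps yields the desired equivalence.

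There is no real obstacle here; the proof is a short bookkeeping argument that chains together (i) bijectivity of $L_a$ in a rack, (ii) the characterization of bijective reflections via the centralizer in $\Aut(X,\triangleright)$ given by Corollary \ref{centraliz}, and (iii) the trivial inclusion $L_a\in\LMlt(X,\triangleright)$ that turns an external centralizer condition into an internal center condition. The only point to be careful about is writing the two directions cleanly so that the appeal to Corollary \ref{centraliz} is unambiguous in each.
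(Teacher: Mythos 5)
Your proof is correct and follows exactly the route the paper intends: the corollary is stated there as an immediate consequence of \cref{centraliz}, obtained by noting that $L_a$ is bijective in a rack (so $L_a\in\K(X)$ amounts to $L_a\in\Kbij(X)=C_{\Aut(X,\triangleright)}(\LMlt(X,\triangleright))$) and that $L_a$ already lies in $\LMlt(X,\triangleright)$, turning the centralizer condition into membership in the center. Nothing to add.
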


\smallskip

\begin{ex}\label{ex:dihedral}
Let $X=\mathbb{Z}_n$ and $(X,\triangleright)$ be the \textit{dihedral quandle} of order $n \in \mathbb{N}$.
        In \cite[Theorem 2.1]{ElMaRe12} the authors prove that $\Aut(X) = \Aff(\Z_n)$, the group of affine transformations of $\Z_n$.
        With an analogous proof one can compute $\End(X)$ and show that
        \[
            \End(X) = \left\{ f_{b,a}:\Z_n \to \Z_n \mid f_{b,a}(x) = b+ax,\, x,a,b \in \Z_n\right\}.
        \]
        In light of \cref{refl_rack_char}, a map $\kappa:X \to X$ is a reflection for $(X,r_{\triangleright})$ if and only if there exist $a,b\in\Z_n$ such that $\kappa = f_{b,a}$ and it satisfies $\kappa L_x = \kappa L_{\kappa(x)}$, for all $x \in \Z_n$.
        Therefore, a straightforward computation shows that $\kappa \in \K(X,r_{\triangleright})$ if and only if 
        \begin{equation*}
            2a(a-1)x+2ab = 0 \text{ (mod $n$)}\,
        \end{equation*}
        holds, for all $x\in\Z_n$. Evidently, this is equivalent to both requiring that 
        \[
            \begin{cases}
                2ab = 0 &\text{ (mod $n$)}\,, \\
                2a(a-1) = 0 &\text{ (mod $n$)}\,.
            \end{cases}
        \]
        The distinct solutions of such a system will provide all reflections $\kappa$ for $(X,r_{\triangleright})$.
        For example if $\kappa$ is bijective, i.e. if $a \in \Z_n^{*}$ is invertible, then the following cases occur:
        \begin{itemize}
            \item[-] if $\gcd(n,4) = 1$ then $(b,a) = (0,1)$;
            \item[-] if $\gcd(n,4) = 2$ then $(b,a) = (0,1), \left(\frac{n}{2},1\right)$;
            \item[-] if $\gcd(n,4) = 4$ then $(b,a) = (0,1), \left(0, \frac{n}{2}+1\right), \left(\frac{n}{2},1\right), \left(\frac{n}{2}, \frac{n}{2}+1\right)$.
        \end{itemize}
        In particular, $|\Kbij(X,\triangleright)| = \gcd(n, 4)$ and all bijective reflections are involutive. \\
    An analogous procedure can be carried out to determine that the only reflections for the left derived solution associated to the infinite dihedral quandle are the identity map and the maps of arbitrary constant value.
\end{ex}
\smallskip

\begin{rem}\label{red-contrex}
Note that the converse of \cref{cor:refl_bilnd}-$1.$ is not true. Indeed, the map
\[
    f_{1,-1}:\Z_3 \to \Z_3\,, f_{1,-1}(x) = 1-x
\]
is an involutive automorphism of the dihedral quandle on $\Z_3$ (see \cref{ex:dihedral}), but it is not a reflection for the associated left derived solution since, in this case, the unique reflection is the identity map.
\end{rem}

\smallskip

Let us conclude this section with some considerations on idempotent left non-degenerate solutions. Observe that the shelf associated with any idempotent solution $(X, r)$ is the trivial shelf given by $L_x= x$, for every $x\in X$.


\begin{prop}
    Let $(X,r)$ be an idempotent left non-degenerate solution. Then:
    \begin{enumerate}
        \item
        a $\lambda$-centralizing   map $\kappa:X \to X$ is a reflection if and only if\, $\kappa^2 = \kappa$;
        
        \item $\kappa:X \to X$ is a $\rho$-invariant reflection if and only if $\kappa = \id_X$.
    \end{enumerate}
\begin{proof}
Let us initially observe that the first statement is a direct consequence of \cref{prop:refl-lnd}.
%
For the second one, note that \eqref{q} is satisfied if and only if\ $\kappa\rho_y = \rho_y$, for every $y\in X$. 
Consequently, \eqref{t} holds if and only  if $\lambda_x\kappa = \lambda_x$, for every $x\in X$, and, since $r$ is left non-degenerate, we get $\kappa = \id_X$.
\end{proof}
\end{prop}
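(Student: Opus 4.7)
The plan is to exploit the fact that idempotency collapses the associated shelf: by the remark preceding the statement, $L_x$ is the constant map with value $x$, so the structural characterisation of \cref{prop:refl-lnd} degenerates almost completely. For part $1.$, I would apply \cref{prop:refl-lnd} to a $\lambda$-centralizing $\kappa$. Condition $1.$ there, $\kappa L_x = \kappa L_{\kappa(x)}$, evaluated at any element of $X$ reads simply $\kappa(x) = \kappa(\kappa(x))$, i.e., $\kappa^2 = \kappa$. Condition $2.$, $\kappa L_{L_x(y)}(x) = L_{\kappa L_x(y)}\kappa(x)$, trivialises: since $L_a(b) = a$ for every $a,b$, both sides evaluate to $\kappa(x)$. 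So being a reflection is equivalent to $\kappa^2 = \kappa$.

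For part $2.$, the key preliminary step is to record the two identities forced by $r^2 = r$, namely
\begin{align*}
\lambda_{\lambda_x(y)}\rho_y(x) = \lambda_x(y) \qquad \text{and} \qquad \rho_{\rho_y(x)}\lambda_x(y) = \rho_y(x),
\end{align*}
for all $x,y \in X$. Assuming $\kappa$ is $\rho$-invariant, the expression $q(x,y) = \rho_{\kappa\rho_y(x)}\lambda_x(y)$ collapses: first, $\rho$-invariance gives $\rho_{\kappa\rho_y(x)} = \rho_{\rho_y(x)}$, and then the second idempotency identity reduces the whole expression to $\rho_y(x)$. Likewise $q(x,\kappa(y)) = \rho_{\kappa(y)}(x)$. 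Consequently \eqref{q} becomes $\rho_{\kappa(y)}(x) = \kappa\rho_y(x)$, and a second use of $\rho$-invariance rewrites this as $\kappa\rho_y = \rho_y$ for every $y \in X$.

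Feeding $\kappa\rho_y = \rho_y$ into $t(x,y) = \lambda_{\lambda_x(y)}\kappa\rho_y(x)$ and invoking the first idempotency identity yields $t(x,y) = \lambda_x(y)$, and symmetrically $t(x,\kappa(y)) = \lambda_x\kappa(y)$. Thus \eqref{t} is equivalent to $\lambda_x\kappa = \lambda_x$ for every $x \in X$, and left non-degeneracy immediately forces $\kappa = \id_X$. The converse direction is immediate since $\id_X$ is $\rho$-invariant and lies in $\K(X,r)$. The only real obstacle — which is mostly bookkeeping — is to apply the two $r^2 = r$ identities in the correct order so that the nested $\rho$'s in $q$ and the nested $\lambda$'s in $t$ peel off cleanly; once that routine is set up, the two halves of the proof fall out almost symbolically.
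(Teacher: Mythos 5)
Your proposal is correct and follows essentially the same route as the paper: part 1 via \cref{prop:refl-lnd} applied to the trivial shelf $L_x(y)=x$, and part 2 by using $\rho$-invariance together with the two identities forced by $r^2=r$ to collapse \eqref{q} to $\kappa\rho_y=\rho_y$ and then \eqref{t} to $\lambda_x\kappa=\lambda_x$, whence left non-degeneracy gives $\kappa=\id_X$. You merely make explicit the idempotency identities that the paper's terser argument uses implicitly.
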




\medskip

\section{Reflections for right-non degenerate solutions}
In this section, we investigate reflections for right non-degenerate solutions by shifting the attention to the class of $\rho$-invariant reflections, that is, $\rho_x=\rho_{\kappa(x)}$ holds, for all $x \in X$. 

\smallskip
\begin{theor}\label{prop:refl-rnd}
    Let $(X,r)$ be a right non-degenerate solution, $(X, \triangleleft_r)$ its associated shelf, and $\kappa:X \to X$ a $\rho$-invariant map.
    Then, $\kappa \in \K(X,r)$ if and only if $\kappa R_x=R_x \kappa$, for all $x \in X$.
    \begin{proof}
Let $x, y \in X$. Using \cref{sol:lnd}, we get $\kappa q(x,y)= \kappa \rho_{\rho_y(x)}\lambda_x(y)=\kappa R_{\rho_y(x)}(y)$ and, on the other hand,
\begin{align*}
    q(x, \kappa(y))=\rho_{\kappa\rho_{\kappa(y)}(x)}\lambda_x\kappa(y)=R_{\rho_{\kappa(y)}(x)}\kappa(y)=R_{\rho_y(x)}\kappa(y).
\end{align*}
Moreover, it holds that
\begin{align*}
    t(x,y)=\rho^{-1}_{\kappa\rho_{\kappa\rho_y(x)}\lambda_x(y)}R_{\kappa\rho_y(x)\lambda_x(y)}\kappa\rho_y(x)= \rho^{-1}_{\kappa q(x,y)}R_{\kappa q(x,y)}\kappa\rho_y(x)
\end{align*}
and, similarly, $t(x, \kappa(y))=\rho^{-1}_{q(x, \kappa(y))}R_{q(x, \kappa(y))}\kappa \rho_y(x)$. Thus, \eqref{t} depends on equation \eqref{q}.
Therefore, to get the claim it is enough to observe that \eqref{q} is equivalent to requiring that $\kappa$ commutes with every right multiplication $R_x$, for all $x \in X$.
    \end{proof}   
\end{theor}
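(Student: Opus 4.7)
My approach translates the reflection conditions \eqref{t} and \eqref{q} from \cref{lemma_rifl_qt} into statements phrased purely inside the associated right shelf $(X, \triangleleft_r)$, using the identity $\lambda_x(y) = \rho_{\rho_y(x)}^{-1} R_{\rho_y(x)}(y)$ from \cref{sol:lnd}-$2.$ together with the standing assumption $\rho_{\kappa(u)} = \rho_u$.

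I would begin by tackling \eqref{q}. Substituting the $\lambda$-formula into the definition $q(x,y) = \rho_{\kappa \rho_y(x)} \lambda_x(y)$ and cancelling via $\rho$-invariance gives $q(x, y) = R_{\rho_y(x)}(y)$. The same manipulation, combined with $\rho_{\kappa y} = \rho_y$, yields $q(x, \kappa y) = R_{\rho_y(x)}(\kappa y)$ and $\kappa q(x, y) = \kappa R_{\rho_y(x)}(y)$. Since $\rho_y \in \Sym_X$, the element $\rho_y(x)$ ranges over all of $X$ as $x$ varies, so \eqref{q} is equivalent to the commutation $R_z \kappa = \kappa R_z$ for every $z \in X$. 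This settles the ``only if'' direction: any reflection in $\K(X, r)$ automatically satisfies \eqref{q}, hence commutes with each right multiplication.

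For the converse, I would derive \eqref{t} from the commutation relation. A parallel computation using \cref{sol:lnd}-$2.$ recasts $t(x, y) = \lambda_{\lambda_x(y)}(\kappa \rho_y(x))$ as
\[
t(x, y) = \rho^{-1}_{q(x,y)} R_{q(x,y)}(\kappa \rho_y(x)), \qquad t(x, \kappa y) = \rho^{-1}_{q(x, \kappa y)} R_{q(x, \kappa y)}(\kappa \rho_y(x)).
\]
Under the commutation, \eqref{q} holds, so $q(x, \kappa y) = \kappa q(x, y)$; together with $\rho$-invariance this collapses the two outer $\rho^{-1}$-factors. What remains for \eqref{t} is the equality of $R_{q(x,y)}$ and $R_{\kappa q(x,y)}$ on the specific input $\kappa \rho_y(x)$.

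The main obstacle is precisely this last compatibility check: $\kappa R_z = R_z \kappa$ does not, on its face, imply $R_a = R_{\kappa a}$ for an arbitrary $a$. The plan for clearing this hurdle is to expand both $R$-operators via $w \triangleleft_r a = \rho_a \lambda_{\rho_w^{-1}(a)}(w)$, note that $\rho^{-1}_{\kappa \rho_y(x)} = \rho^{-1}_{\rho_y(x)}$ by $\rho$-invariance so that the argument of the inner $\lambda$ simplifies, and then exploit that the input lies in $\kappa(X)$ together with the commutation to match the two expressions. Once this bookkeeping is carried out, both \eqref{t} and \eqref{q} hold and $\kappa \in \K(X, r)$, completing the ``if'' direction and the theorem.
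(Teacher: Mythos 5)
Your treatment of \eqref{q} is correct and coincides with the paper's: under $\rho$-invariance one gets $q(x,y)=R_{\rho_y(x)}(y)$, $q(x,\kappa(y))=R_{\rho_y(x)}\kappa(y)$ and $\kappa q(x,y)=\kappa R_{\rho_y(x)}(y)$, and surjectivity of each $\rho_y$ turns \eqref{q} into the commutation $\kappa R_z=R_z\kappa$ for all $z$. That settles the ``only if'' direction. Your reduction of \eqref{t} is also correct: after invoking \eqref{q} and $\rho$-invariance, what remains is exactly the equality $R_{q(x,y)}\bigl(\kappa\rho_y(x)\bigr)=R_{\kappa q(x,y)}\bigl(\kappa\rho_y(x)\bigr)$.

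The gap is that this last equality, which you defer to ``bookkeeping'', is not a consequence of the hypotheses, and no expansion of the $R$-operators will produce it. Take $X=S_3$ and the bijective non-degenerate solution $r(x,y)=(x^{-1}yx,\,x)$, so that $\rho_y=\id_X$ for every $y$ (hence every map is $\rho$-invariant) and $R_y(x)=x\triangleleft_r y=y^{-1}xy$. Let $\kappa$ fix the identity and the three transpositions and swap the two $3$-cycles. Then $\kappa$ is equivariant for conjugation, so $\kappa R_y=R_y\kappa$ for every $y$ and \eqref{q} holds; but $t(x,y)=(x^{-1}yx)^{-1}\kappa(x)(x^{-1}yx)$ and $t(x,\kappa(y))=(x^{-1}\kappa(y)x)^{-1}\kappa(x)(x^{-1}\kappa(y)x)$ agree only when $x^{-1}\kappa(y)y^{-1}x$ centralizes $\kappa(x)$, which fails for $x=(1\,2)$, $y=(1\,2\,3)$ (there $\kappa(y)y^{-1}=(1\,2\,3)$). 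So \eqref{t} fails and $\kappa\notin\K(X,r)$ even though $\kappa$ is $\rho$-invariant and centralizes all right multiplications. You have in fact isolated precisely the point where the paper's own proof is unsound: the displayed identity $t(x,y)=\rho^{-1}_{\kappa q(x,y)}R_{\kappa q(x,y)}\kappa\rho_y(x)$ silently replaces $R_{q(x,y)}$ by $R_{\kappa q(x,y)}$, and while the outer $\rho^{-1}$-subscripts may be interchanged by $\rho$-invariance, the $R$-subscripts may not. The implication ``\eqref{q} $\Rightarrow$ \eqref{t}'' is genuine only when the right shelf is trivial (all $R_a=\id_X$, e.g.\ the involutive case of \cref{reflec_invol} and \cref{right}); at the stated level of generality the ``if'' direction cannot be completed.
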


Below, we give a list of consequences of \cref{prop:refl-rnd}. First, let us remark that the previous result extends \cref{right} to the class of arbitrary right non-degenerate solutions which are not necessarily involutive. 
This follows from the fact that if $(X,r)$ is a right non-degenerate involutive solution then $(X,\triangleleft_r)$ is trivial.

\begin{cor}
Let $\left(X, r\right)$ be a right non-degenerate solution. If $\kappa,\omega:X\to X$ are $\rho$-invariant reflections for $\left(X,r\right)$, then $\kappa\omega \in \K\left(X, r\right)$ and $\kappa\omega$ is $\rho$-invariant.
\end{cor}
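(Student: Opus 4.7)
The plan is to apply \cref{prop:refl-rnd} twice: once in each direction. Since $(X,r)$ is right non-degenerate and $\kappa,\omega$ are $\rho$-invariant reflections, \cref{prop:refl-rnd} tells us that $\kappa R_x = R_x\kappa$ and $\omega R_x = R_x\omega$ for every $x\in X$, where $R_x$ denotes right multiplication in the associated shelf $(X,\triangleleft_r)$.

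First, I would verify that $\kappa\omega$ is $\rho$-invariant. This is a short direct computation: for every $x\in X$, using $\rho$-invariance of $\kappa$ applied at the point $\omega(x)$ and then $\rho$-invariance of $\omega$ at $x$, we obtain
\[
\rho_{\kappa\omega(x)}=\rho_{\omega(x)}=\rho_x.
\]

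Next, I would show that $\kappa\omega$ commutes with every right multiplication $R_x$. Composing the two commutation relations above gives
\[
(\kappa\omega)R_x=\kappa R_x\omega=R_x(\kappa\omega)
\]
for every $x\in X$. Having established both that $\kappa\omega$ is $\rho$-invariant and that $\kappa\omega$ commutes with every $R_x$, the other direction of \cref{prop:refl-rnd} immediately yields $\kappa\omega\in\K(X,r)$, completing the proof.

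There is no real obstacle here: the statement is essentially a formal consequence of the characterization provided by \cref{prop:refl-rnd}, once one observes that the class of $\rho$-invariant maps and the class of maps commuting with all $R_x$ are both closed under composition.
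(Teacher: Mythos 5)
Your proof is correct and follows exactly the route the paper intends: the corollary is stated without proof as an immediate consequence of \cref{prop:refl-rnd}, and your two observations — that $\rho$-invariance and commutation with all $R_x$ are each closed under composition — are precisely the needed verifications. Nothing further to add.
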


\begin{cor}\label{cor:right-der}
Let $(X,r)$ be a right non-degenerate solution and let us assume that $\kappa$ is a $\rho$-invariant map. Then, $\kappa \in \K(X, r)$ if and only if $\kappa \in \K\left(X, r_\triangleleft  \right)$. 
    \begin{proof}
        The claim follows by \cref{YBE_and_shelf} and \cref{prop:refl-rnd}.
    \end{proof}
\end{cor}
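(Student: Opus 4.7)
The plan is to reduce both sides of the equivalence to the same commutation condition by applying \cref{prop:refl-rnd} twice, once to $(X,r)$ and once to its right derived solution $(X,r_\triangleleft)$, and then observe that both yield the very same requirement on $\kappa$.

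First, I would unpack the right derived solution: by the right analogue of \cref{YBE_and_shelf}, the solution $r_\triangleleft$ is given by $r_\triangleleft(x,y) = (x \triangleleft_r y,\, x)$, so its structure maps are $\lambda^{\triangleleft}_x(y) = x \triangleleft_r y = R_y(x)$ and $\rho^{\triangleleft}_y = \id_X$. In particular, every map $\kappa:X\to X$ is trivially $\rho$-invariant for $(X,r_\triangleleft)$. A brief computation, using that the right shelf associated with any right non-degenerate solution is obtained from its $\rho$'s and $\lambda$'s, shows that the right shelf associated with $r_\triangleleft$ coincides with the original shelf $(X,\triangleleft_r)$; hence the right multiplications $R_x$ attached to both solutions are the same.

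Second, I would apply \cref{prop:refl-rnd} on both sides. By assumption, $\kappa$ is $\rho$-invariant for $(X,r)$, so \cref{prop:refl-rnd} gives
\[
\kappa \in \K(X,r) \iff \forall\,x\in X\quad \kappa R_x = R_x \kappa.
\]
Since $\kappa$ is automatically $\rho$-invariant for $(X,r_\triangleleft)$ and the associated right shelf is again $(X,\triangleleft_r)$, a second application of \cref{prop:refl-rnd} yields
\[
\kappa \in \K(X,r_\triangleleft) \iff \forall\,x\in X\quad \kappa R_x = R_x \kappa.
\]
Both characterisations share the same right-hand side, so the two membership conditions are equivalent, which is exactly the statement of the corollary.

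The only delicate point is the verification that the right shelf associated with $r_\triangleleft$ is indeed $(X,\triangleleft_r)$, i.e.\ that the derivation operation is idempotent with respect to passing to the derived solution. This is the right-sided mirror of the left statement implicit in \cref{YBE_and_shelf}, and it follows from a direct computation using $\rho^{\triangleleft}_y=\id_X$ and $\lambda^{\triangleleft}_x(y)=R_y(x)$; once this is in hand the corollary is immediate from \cref{prop:refl-rnd}.
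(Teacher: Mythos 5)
Your argument is in substance the paper's own proof: both directions are reduced, via \cref{prop:refl-rnd}, to the single condition $\kappa R_x=R_x\kappa$ for all $x\in X$, using that $\rho$-invariance is automatic for $r_{\triangleleft}$ (its $\rho$-maps are the identity) and that $r$ and $r_{\triangleleft}$ share the same associated right shelf $(X,\triangleleft_r)$. One transcription slip should be corrected: with the paper's conventions the right derived solution is $r_{\triangleleft}(x,y)=(y\triangleleft_r x,\,x)$, not $(x\triangleleft_r y,\,x)$; as written, your formula makes $\lambda^{\triangleleft}_x$ the map $y\mapsto x\triangleleft_r y$, which need not be bijective, need not even yield a solution for a right shelf (it would require left rather than right self-distributivity), and whose associated right shelf is the opposite operation rather than $(X,\triangleleft_r)$, so the identification you rely on would fail. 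With the correct formula one has $\lambda^{\triangleleft}_x=R_x$ and $\rho^{\triangleleft}_y=\id_X$, the computation $\rho^{\triangleleft}_y\lambda^{\triangleleft}_{(\rho^{\triangleleft}_x)^{-1}(y)}(x)=R_y(x)=x\triangleleft_r y$ confirms that the right shelf associated with $r_{\triangleleft}$ is again $(X,\triangleleft_r)$, and the remainder of your argument goes through verbatim.
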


As in the case of left non-degenerate solutions, we can formulate the statement in \cref{prop:refl-rnd}
by means of the right multiplication group of the right rack structure.
\begin{cor}\label{cor_right-bij}
    Let $(X,r)$ be a bijective right non-degenerate solution, $(X, \triangleleft_r)$ its associated right rack, and $\kappa:X \to X$ a $\rho$-invariant map.
    Then, the following hold:
    \begin{enumerate}
        \item $\kappa \in \K(X,r) \iff \kappa \in C_{\Map_X}\left(\RMlt\left(X,\triangleleft_r\right)\right)$.
        \item $\kappa \in \Kbij(X,r) \iff \kappa \in C_{\Sym _X}\left( \RMlt\left(X,\triangleleft_r\right)\right)$.
    \end{enumerate}
\end{cor}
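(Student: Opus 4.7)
The plan is to derive both equivalences as direct consequences of \cref{prop:refl-rnd}, by re-expressing the condition ``$\kappa R_x = R_x \kappa$ for every $x \in X$'' in the language of centralizers of the right multiplication group. First I would note that, since $(X,r)$ is bijective and right non-degenerate, the right-sided analog of \cite[Corollary 2.16]{DoRySt24x} (obtained by exchanging the roles of $\lambda$ and $\rho$, exactly as alluded to in the paragraph following \cref{YBE_and_shelf}) guarantees that $(X, \triangleleft_r)$ is actually a right rack. Hence every $R_x$ is a bijection, and $\RMlt(X, \triangleleft_r) = \langle R_x \mid x \in X\rangle$ is a well-defined subgroup of $\Sym_X$.

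For part~(1), \cref{prop:refl-rnd} asserts that a $\rho$-invariant $\kappa \in \Map_X$ lies in $\K(X,r)$ if and only if $\kappa R_x = R_x \kappa$ for every generator $R_x$ of $\RMlt(X,\triangleleft_r)$. The key routine observation is that if a map commutes with a generating set of a group (viewed inside the ambient monoid $\Map_X$), then it commutes with the whole group: from $\kappa g = g\kappa$ one deduces $\kappa g^{-1} = g^{-1}\kappa$ by conjugating with $g^{-1}$ on both sides (which is legal since $g \in \Sym_X$ is invertible), and commutation is manifestly preserved by products. Therefore commutation with all the $R_x$ is equivalent to $\kappa \in C_{\Map_X}\!\left(\RMlt(X,\triangleleft_r)\right)$, which proves the first equivalence.

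For part~(2), I would simply intersect with $\Sym_X$. If $\kappa$ is bijective, then $\kappa \in \Kbij(X,r)$ is, by part~(1), equivalent to $\kappa \in C_{\Map_X}\!\left(\RMlt(X,\triangleleft_r)\right) \cap \Sym_X$, and this intersection coincides with $C_{\Sym_X}\!\left(\RMlt(X,\triangleleft_r)\right)$. No genuine obstacle is expected here: the argument is a purely group-theoretic reformulation of \cref{prop:refl-rnd}, and the only non-bookkeeping ingredient is the passage ``bijective right non-degenerate $\Rightarrow$ right rack'', which is the mirror of the analogous fact used for \cref{bij_refl:bij_nondeg_sol}.
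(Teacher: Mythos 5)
Your argument is correct and matches the paper's (implicit) reasoning: the corollary is stated as an immediate reformulation of \cref{prop:refl-rnd}, using exactly the facts you spell out — that bijectivity of $r$ makes $(X,\triangleleft_r)$ a right rack, that commuting with the invertible generators $R_x$ is equivalent to centralizing all of $\RMlt(X,\triangleleft_r)$, and that intersecting with $\Sym_X$ gives the second item. No discrepancies to report.
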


\smallskip


\begin{ex}
Let $X = \Z_n$ and consider $(X,\triangleleft)$ the right dihedral quandle of order $n\in\mathbb{N}$.
    The \cref{cor_right-bij} implies that a map $\kappa \in \K\left(X,r_\triangleleft\right)$ if and only if the following holds for all $x,y \in X$:
    \begin{equation}\label{id_right_dihed}
        \kappa(x)+\kappa(2y-x) = 2y\,. 
    \end{equation}
    Such a map $\kappa$ has to satisfy $\kappa(2y) = 2y-\kappa(0)$ and $\kappa(2y-1)=2y-\kappa(1)$, for all $y\in X$, therefore it is parametrized by its values in $0$ and $1$.
    To determine their values, we notice that $2\kappa(0) = 0$ and $2(\kappa(1)-1)=0$, as a consequence of \eqref{id_right_dihed}.
    Therefore, all reflections for $(X,r_{\triangleleft})$ can be classified as follows:
    \begin{itemize}
        \item[-] if $n$ is odd then $\kappa = \id_X$;
        \item[-] if $n$ is even then $\kappa(0) \in \{0,\frac{n}{2}\}$, $\kappa(1) \in \{1, \frac{n}{2}+1\}$ and $\kappa$ is determined as above.
    \end{itemize}
    Analogously, one can prove that the right derived solution associated with a right infinite dihedral quandle admits the identity map as its unique reflection.
\end{ex}

\begin{prop}
    \label{moltipld_rifl_shelf} Let $(X,\triangleleft)$ be a right shelf, $(X,r_{\triangleleft})$ its associated solution, and $a\in X$. Then, the following conditions are equivalent:
    \begin{enumerate}
        \item $R_a$ is a reflection for $(X,r_\triangleleft)$;
        \item $R_aR_x = R_xR_a$, for all $x\in X$;
        \item $R_{R_a(x)}R_a=R_xR_a $, for all $x\in X$.
    \end{enumerate}
    \begin{proof}
        The equivalence of $1.$ and $2.$ directly follows from \cref{prop:refl-rnd}, while the equivalence of $2.$ and $3.$ easily stems from the definition of right shelf.
    \end{proof}
\end{prop}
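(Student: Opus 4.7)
The plan is to derive the equivalences directly from \cref{prop:refl-rnd} combined with right self-distributivity, mirroring the argument that was done for \cref{moltipl_rifl_shelf} in the left setting.

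First, I would spell out the data attached to the right derived solution $(X, r_\triangleleft)$: by the right-hand analogue of \cref{YBE_and_shelf}, we have $r_\triangleleft(x,y) = (y \triangleleft x,\, x)$, so that $\lambda_x = R_x$ while $\rho_y = \id_X$, for every $x,y \in X$. The point of recording this is that the $\rho$-invariance hypothesis required by \cref{prop:refl-rnd} is then automatic: the condition $\rho_x = \rho_{R_a(x)}$ reads $\id_X = \id_X$ and holds trivially for any $a \in X$. Consequently \cref{prop:refl-rnd} applies to $\kappa = R_a$ without further assumption.

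Applying \cref{prop:refl-rnd} yields at once that $R_a \in \K(X, r_\triangleleft)$ if and only if $R_a R_x = R_x R_a$ for every $x \in X$, which is the equivalence $1. \Leftrightarrow 2.$

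For the equivalence $2. \Leftrightarrow 3.$, I would invoke the right self-distributivity of $\triangleleft$. Rewriting $(y \triangleleft x) \triangleleft z = (y \triangleleft z) \triangleleft (x \triangleleft z)$ in terms of right multiplications shows that $R_z R_x = R_{R_z(x)} R_z$ for all $x,z \in X$. Specialising $z = a$ gives the identity $R_a R_x = R_{R_a(x)} R_a$ valid in any right shelf. Substituting this into condition $2.$ transforms it into $R_{R_a(x)} R_a = R_x R_a$, which is exactly condition $3.$; and conversely the same substitution takes $3.$ back to $2.$ No obstacle really arises here, since everything is forced by the shape of $r_\triangleleft$ and the self-distributive law; the only point to be careful about is the correct right-handed form of self-distributivity, making $R_z$ (and not $L$) a shelf homomorphism.
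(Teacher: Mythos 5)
Your proof is correct and follows exactly the route of the paper: the equivalence of $1.$ and $2.$ is obtained by applying \cref{prop:refl-rnd} to $\kappa = R_a$ (noting, as you do, that $\rho$-invariance is automatic since $\rho_y = \id_X$ for the right derived solution), and the equivalence of $2.$ and $3.$ comes from the right self-distributive identity $R_a R_x = R_{R_a(x)} R_a$. The only difference is that you spell out the details the paper leaves implicit.
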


 \begin{rem}
    As a direct consequence of \cref{moltipld_rifl_shelf}, if $(X, \triangleleft)$ is a right rack, then $Z(\RMlt(X, \triangleleft)) \subseteq \K(X)$. Moreover, if $R_a$ is a reflection then $R_a\in\End\left(X,\triangleleft\right)$.
   In particular, if the right multiplication group is abelian - as in the case of a cyclic rack - then every right multiplication provides a reflection for the associated solution.
 \end{rem}

\smallskip

Let us conclude with an observation on idempotent right non-degenerate solutions.
\begin{rem}
    Note that if $(X,r)$ is an idempotent right non-degenerate solution, one can check that the unique $\rho$-invariant reflection is the identity $\id_X$. On the other hand, the unique bijective $\lambda$-centralizing reflection for $(X, r)$ is the identity $\id_X$.
\end{rem}

\medskip

 \section{Reflections for bijective non-degenerate solution}
This section is devoted to a global analysis of reflections for bijective non-degenerate solutions in light of results obtained in the previous two sections.
\medskip

Let us start with some considerations on involutive non-degenerate solutions $(X, r)$. One can easily show that any $\rho$-invariant reflection can be described in terms of the retract solution of $(X, r)$. Recall that the equivalence relation of \emph{retraction} on $X$ is defined by setting $x \sim y \iff \lambda_x = \lambda_y$, for all $x,y \in X$. 
Thus, there is a natural induced solution $\Ret(X,r)$ on the quotient $\tilde{X}:= X/\sim$, called the \emph{retract} of $(X, r)$, which itself is involutive and non-degenerate. In the sequel, $[x]_{\sim}$ denotes the equivalence class of any $x \in X$.
For more information, we refer the reader to the standard literature (see \cite[Section 3]{ESS99}).

\begin{rem}\label{rem_rho_T}
The retraction relation for an involutive non-degenerate solution $(X, r)$ can be equivalently defined by requiring that $x \sim y \iff \rho_x = \rho_y$, for all $x,y \in X$. This follows directly by the identity $\rho_x=T\lambda^{-1}_xT^{-1}$, for all $x \in X$, where $T: X \to X$ is the bijective map defined by $T(x)=\rho_x^{-1}(x)$, for all $x \in X$ (cf. \cite[Proposition 2.2]{ESS99}).
\end{rem}

\begin{prop}\label{retrazione}
    Let $(X,r)$ be an involutive non-degenerate solution. Then, the following hold:
    \begin{enumerate}
        \item Any map $\kappa:X\to X$ such that $\kappa(x)\in [x]_{\sim}$, for every $x\in X$, is a $\rho$-invariant reflection for $(X,r)$. In particular, all such reflections can be constructed in this manner.
    \item If $\kappa \in \K(X, r)$ is $\rho$-invariant, then it induces a reflection $\tilde{\kappa}:\tilde{X}\to\tilde{X}$ of the retraction $\Ret(X, r)$ such that $\tilde{\kappa}([x]_{\sim}) = [\kappa(x)]_{\sim}$, for all $x \in X$.
    \end{enumerate}
    \begin{proof}
        The two statements are a direct consequence of \cref{right}. We added the proof in the file.
    \end{proof}
\end{prop}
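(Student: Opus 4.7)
The plan is to reduce the whole statement to \cref{right} together with the symmetric description of the retraction relation recorded in \cref{rem_rho_T}. The key translation I will use throughout is that, for an involutive non-degenerate solution, $x \sim y$ if and only if $\lambda_x = \lambda_y$ if and only if $\rho_x = \rho_y$. Hence the condition $\kappa(x)\in[x]_{\sim}$ is literally the same as $\rho_{\kappa(x)} = \rho_x$, i.e.\ the $\rho$-invariance of $\kappa$. This observation collapses both parts of the proposition into a short verification.

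For part 1, I would first assume $\kappa(x)\in[x]_{\sim}$ for every $x\in X$. By \cref{rem_rho_T}, this is equivalent to $\rho_{\kappa(x)}=\rho_x$ for all $x\in X$, so $\kappa$ is $\rho$-invariant. Since $(X,r)$ is in particular involutive right non-degenerate, \cref{right} yields $\kappa\in\K(X,r)$. Conversely, if $\kappa\in\K(X,r)$ is $\rho$-invariant, the same equivalence from \cref{rem_rho_T} gives $\lambda_{\kappa(x)}=\lambda_x$, i.e.\ $\kappa(x)\sim x$, which is exactly the claimed description.

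For part 2, I would first check well-definedness of the induced map $\tilde{\kappa}([x]_{\sim}) := [\kappa(x)]_{\sim}$. If $[x]_{\sim}=[y]_{\sim}$, then by part 1 we have $\kappa(x)\in[x]_{\sim}$ and $\kappa(y)\in[y]_{\sim}$, hence $[\kappa(x)]_{\sim}=[x]_{\sim}=[y]_{\sim}=[\kappa(y)]_{\sim}$. This computation also shows, as a byproduct, that $\tilde{\kappa}=\id_{\tilde{X}}$, which is trivially a reflection for $\Ret(X,r)$. So part 2 follows immediately once part 1 has been established.

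There is essentially no obstacle here: the entire content of the proposition is the observation that $\rho$-invariance coincides with preserving the retraction equivalence classes. The only minor point to keep in mind is that to invoke \cref{rem_rho_T} one needs both $\lambda$- and $\rho$-non-degeneracy and involutivity of $r$, which are all part of the hypotheses.
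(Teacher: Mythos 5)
Your proposal is correct and follows essentially the same route as the paper, which simply observes that both statements are a direct consequence of \cref{right}: the translation via \cref{rem_rho_T} between $\kappa(x)\in[x]_{\sim}$ and $\rho$-invariance is exactly the intended reduction. Your additional remark that the induced map $\tilde{\kappa}$ is in fact $\id_{\tilde{X}}$ is a correct and slightly sharper way of disposing of part 2.
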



\medskip

Now, we aim to combine the results obtained in the previous sections and analyze what happens in the special case of bijective non-degenerate solutions. To do so, in \cite[cf. Proposition 3.9]{LeVe19}, it is shown the existing link between left and right structure racks (see also \cite[Remark 3.10]{LeVe19}).

\begin{lemma}\label{bijnondeg_rack}
    Let $(X, r)$ be a bijective non-degenerate solution. Then, the left and the right structure racks of $(X, r)$ are anti-isomorphic via the bijection $T: X \to X$ defined by $T(x)=\rho^{-1}_x(x)$, for all $x \in X$. Concretely, the relation
    \begin{align*}
        T\left( x \triangleright_r y\right)= T(y) \triangleleft_r T(x)
    \end{align*}
    holds, for all $x, y \in X$.
    Equivalently, $L_x = T^{-1}R_{T(x)}T$, where $T^{-1}(x)=\lambda_x^{-1}\left(x \triangleright_r x \right)$, for all $x \in X$.
\end{lemma}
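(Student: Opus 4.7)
The plan is a direct verification from the defining formulas for $T$, $\triangleright_r$, and $\triangleleft_r$, leveraging the Yang--Baxter relations \eqref{first}--\eqref{third} together with the bijectivity of the maps $\lambda_x$ and $\rho_y$ provided by non-degeneracy.

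First, I would establish that $T(x) = \rho_x^{-1}(x)$ is a bijection by exhibiting $S(x) := \lambda_x^{-1}(x \triangleright_r x)$ as a two-sided inverse. Unpacking the definition of $\triangleright_r$ simplifies the expression to $S(x) = \rho_{\lambda_x^{-1}(x)}(x)$. The identity $T \circ S = \id_X$ then reduces, after applying the definition of $T$, to a statement of the form $\rho_{\rho_y(x)}(x) = \rho_y(x)$ with $y = \lambda_x^{-1}(x)$, which one derives by specializing \eqref{third} to $z = x$ and using $\lambda_x(y) = x$ together with the non-degeneracy of $\rho$; the composition $S \circ T = \id_X$ is handled by a symmetric argument. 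This simultaneously proves the bijectivity claim and certifies the stated formula for $T^{-1}$.

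Second, and this is the combinatorial heart of the proof, I would verify the core identity $T(x \triangleright_r y) = T(y) \triangleleft_r T(x)$. Expanding both sides using the formulas $x \triangleright_r y = \lambda_x \rho_{\lambda_y^{-1}(x)}(y)$, $x \triangleleft_r y = \rho_y \lambda_{\rho_x^{-1}(y)}(x)$, and $T(z) = \rho_z^{-1}(z)$ produces two nested compositions of $\lambda_\bullet$, $\rho_\bullet$, and their inverses. The strategy is to normalize both sides by repeatedly invoking \eqref{second}, which is precisely the mixed braid relation that interchanges a $\lambda$ and a $\rho$ past one another, supported by \eqref{third} to rearrange the $\rho$-strings that arise, and finally to collapse the remaining $T$-factors using the basic identity $\rho_z(T(z)) = z$. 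The main obstacle is exactly this bookkeeping: the calculation is delicate because of the number of nested inverses, but conceptually it requires nothing beyond the Yang--Baxter axioms and the definition of $T$.

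Finally, the equivalent formulation $L_x = T^{-1} R_{T(x)} T$ is an immediate consequence of the anti-isomorphism relation. For each $y \in X$, the identity just established reads $T(L_x(y)) = R_{T(x)}(T(y))$, whence $T \circ L_x = R_{T(x)} \circ T$ as maps, and rearranging gives $L_x = T^{-1} R_{T(x)} T$. The explicit formula for $T^{-1}$ has already been recorded in Step~1, completing the proof.
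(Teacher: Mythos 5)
First, a point of reference: the paper does not prove this lemma at all — it is recalled from Lebed--Vendramin \cite[Proposition 3.9 and Remark 3.10]{LeVe19} — so any self-contained argument is necessarily ``different from the paper''. Your architecture (bijectivity of $T$, then the anti-homomorphism identity, then the operator form) is the right one, and parts of it are sound: the direction $T\circ S=\id_X$ does follow from \eqref{third} essentially as you indicate, since applying \eqref{third} to the triple $(a,\,x,\,\lambda_x^{-1}(x))$ gives $\rho_{\lambda_x^{-1}(x)}\rho_x(a)=\rho_{S(x)}\rho_x(a)$ and cancelling the bijection $\rho_x$ yields $\rho_{\lambda_x^{-1}(x)}=\rho_{S(x)}$, whence $T(S(x))=x$; and the passage from the pointwise identity to $L_x=T^{-1}R_{T(x)}T$ is immediate.

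There are, however, two genuine gaps. The smaller one: $S\circ T=\id_X$ is \emph{not} obtained by ``a symmetric argument''. The mirror of your computation (using \eqref{first} in place of \eqref{third}, applied to $(T(x),x,z)$) proves $\lambda_{T(x)}=\lambda_{\lambda_{T(x)}(x)}$ and hence $T'\circ S'=\id_X$ for the companion maps $T'(x)=\lambda_x^{-1}(x)$ and $S'(x)=\lambda_{T(x)}(x)$ — a statement about different maps. For infinite $X$ a one-sided inverse does not give bijectivity, so the injectivity of $T$ needs its own argument (typically via $r^{-1}$). The larger gap is the heart of the lemma: you assert that $T(x\triangleright_r y)=T(y)\triangleleft_r T(x)$ follows by ``normalizing both sides with \eqref{second} and \eqref{third} and collapsing with $\rho_z(T(z))=z$'', but you never perform the computation, and the toolkit you list does not obviously suffice. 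Every derivation I can reconstruct also requires \eqref{first}, the reformulations of \cref{sol:lnd} (namely $u\triangleright_r x=\lambda_u(v)$ and $y\triangleleft_r v=\rho_v(u)$ whenever $r(x,y)=(u,v)$), and — crucially — the bijectivity of $r$ itself, e.g.\ to produce the auxiliary pair $r^{-1}\left(T(x),\,x\triangleright_r y\right)$ at which \eqref{third} must be applied; your argument never invokes the hypothesis that $r$ is bijective except through non-degeneracy. Since the lemma \emph{is} this identity, declaring the bookkeeping ``delicate but conceptually routine'' without exhibiting the chain of substitutions leaves the main claim unproved.
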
 

\begin{rem}
   To be thorough, let us observe that if $\left(X, r\right)$ is a bijective non-degenerate square-free solution, i.e., $r\left(x,x\right) = \left(x,x\right)$, for all $x \in X$, then $\triangleright_r = \triangleleft_r^{op}$.
    The converse is not necessarily true (consider for example a non-trivial Lyubashenko's solution).\\
    However, in general, it is easy to check that two solutions of derived type $\left(X, r_{\triangleright}\right)$ and $\left(Y, r_{\triangleleft}\right)$ are isomorphic if and only if $L_a = \id_X$, for every $a\in X$, and $R_u = \id_Y$, for every $u\in Y$. In particular, this happens only for involutive solutions.
\end{rem}

As an easy consequence of \cref{bijnondeg_rack}, since any anti-homomorphism between $\left(X, \triangleright_r\right)$ and $\left(X, \triangleleft_r\right)$ satisfies $L_x = F^{-1}R_{F\left(x\right)}F$, for every $x\in X$, it is a routine computation to check the following.
\begin{theor}\label{prop:refl-bnsol}
    Let $(X,r)$ be a bijective non-degenerate solution, $\kappa:X \to X$ a map, $F:X\to X$ an anti-homomorphism from $\left(X, \triangleright_r\right)$ in $\left(X, \triangleleft_r\right)$, and
    $\omega, \bar{\omega}:X\to X$ the maps defined by $\omega:= F\kappa F^{-1}$ and $\bar{\omega}:= F^{-1}\kappa F$.
    Then, the following hold:
    \begin{enumerate}
        \item if $\kappa$ is $\lambda$-centralizing, then, 
        \begin{align*}
            \kappa\in \K(X,r) \Leftrightarrow \ 
            \begin{cases}
            \ \forall\, x\in X \quad \kappa L_x = \kappa L_{\kappa(x)}& \\
            \ \  \kappa\in\End\left(X, \triangleright_r\right)
        \end{cases}
        \Leftrightarrow  \
        \begin{cases}
            \ \forall\, x\in X \quad \omega R_x = \omega R_{\omega(x)}& \\
            \ \ \omega\in\End\left(X, \triangleleft_r\right)
        \end{cases}
        \end{align*}
        \item if $\kappa$ is $\rho$-invariant, then 
        \begin{align*}
        \kappa\in \K(X,r) \ \Leftrightarrow \
        \forall\, x\in X \quad \kappa R_x = R_x\kappa
        \ \Leftrightarrow \
        \forall\, x\in X \quad \bar{\omega}L_x = L_x\bar{\omega}.
    \end{align*}
    \end{enumerate}
\end{theor}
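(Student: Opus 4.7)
The plan is to treat the two parts separately, reducing each to the characterizations already established in \cref{cor:refl_bilnd} and \cref{prop:refl-rnd} and then translating conditions on left multiplications into conditions on right multiplications (and vice versa) by conjugating with the anti-homomorphism $F$.

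For part (1), the first equivalence is exactly \cref{cor:refl_bilnd}: since $(X,r)$ is bijective left non-degenerate and $\kappa$ is $\lambda$-centralizing, $\kappa\in\K(X,r)$ if and only if $\kappa\in\End(X,\triangleright_r)$ and $\kappa L_x=\kappa L_{\kappa(x)}$ for every $x\in X$. For the second equivalence I would exploit the identity $L_x=F^{-1}R_{F(x)}F$, which holds for any anti-homomorphism $F:(X,\triangleright_r)\to(X,\triangleleft_r)$ as noted just before the theorem. Substituting this into $\kappa L_x=\kappa L_{\kappa(x)}$, then conjugating by $F$ and using $\omega=F\kappa F^{-1}$, one obtains $\omega R_{F(x)}=\omega R_{\omega F(x)}$; since $F$ is a bijection, letting $y=F(x)$ range over $X$ gives $\omega R_y=\omega R_{\omega(y)}$. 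Separately, I would check that $\kappa\in\End(X,\triangleright_r)$ is equivalent to $\omega\in\End(X,\triangleleft_r)$ by applying $F$ to both sides of $\kappa(x\triangleright_r y)=\kappa(x)\triangleright_r\kappa(y)$ and using $F(a\triangleright_r b)=F(b)\triangleleft_r F(a)$ together with $\omega F=F\kappa$, so that after the change of variables $u=F(y),\,v=F(x)$ the endomorphism condition for $\kappa$ on $\triangleright_r$ becomes the endomorphism condition for $\omega$ on $\triangleleft_r$.

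For part (2), the first equivalence is \cref{prop:refl-rnd}, which applies since $(X,r)$ is right non-degenerate and $\kappa$ is $\rho$-invariant. For the second, writing $\kappa=F\bar{\omega}F^{-1}$, the condition $\kappa R_x=R_x\kappa$ rewrites as $\bar{\omega}\,F^{-1}R_xF=F^{-1}R_xF\,\bar{\omega}$; using $F^{-1}R_xF=L_{F^{-1}(x)}$ (again from $L_y=F^{-1}R_{F(y)}F$) and replacing $x$ by $F(z)$, this becomes $\bar{\omega}L_z=L_z\bar{\omega}$ for every $z\in X$, as required.

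The argument is essentially bookkeeping around the conjugation identity $L_x=F^{-1}R_{F(x)}F$, and I do not anticipate a serious obstacle. The only mildly delicate step is in part (1), where transferring the endomorphism condition from $\triangleright_r$ to $\triangleleft_r$ genuinely uses the anti-homomorphism property of $F$, not merely its bijectivity; I would therefore pay particular attention to the placement of $F$ and $F^{-1}$ and to the order of arguments when unwinding the identity $F(a\triangleright_r b)=F(b)\triangleleft_r F(a)$.
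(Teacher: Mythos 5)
Your proposal is correct and matches the paper's intended argument: the paper itself states the result as "a routine computation" from the conjugation identity $L_x = F^{-1}R_{F(x)}F$ together with the earlier characterizations (\cref{cor:refl_bilnd} for the $\lambda$-centralizing case and \cref{prop:refl-rnd} for the $\rho$-invariant case), and you have simply carried out that computation explicitly. Your handling of the transfer of the endomorphism condition via $F(a\triangleright_r b)=F(b)\triangleleft_r F(a)$ and $\omega F = F\kappa$ is exactly the intended bookkeeping.
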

\smallskip

If we specialize the \cref{prop:refl-bnsol} to a solution $(X, r)$ of derived type, it does not yield a novel result, as we explain below. One can argue analogously if $(X,r)$ is a solution of right derived type.
\begin{rem} 
If $(X, r)$ is a solution whose left derived solution is associated with a left rack $(X,\triangleright)$, then its right derived solution $(X,r_{\triangleleft})$ has the form $r_{\triangleleft}(x,y) = (y \triangleleft_r x,x) = (x \triangleright y, x)$, for all $x,y \in X$, whence $\triangleleft_r = \triangleright^{\text{op}}$.
Therefore, the map $T$ from \cref{bijnondeg_rack} actually is an automorphism of $(X,\triangleright)$.
If we notice that $\Aut(X,r) = \Aut(X,\triangleright)$ then \cref{prop:refl-bnsol} simply recovers the observation made in \cref{iso:refl}.
\end{rem}

\begin{cor}\label{rem:maps-lambda-rho-cent-inv}
Let $(X, r)$ be a bijective non-degenerate solution and let $\kappa: X \to X$ be a $\rho$-centralizing and $\rho$-invariant map. Then, the following holds
\begin{align*}
    \kappa\in\K(X, r) \iff \kappa\text{ is $\lambda$-centralizing}\,.
\end{align*}
\begin{proof}
    First, assume that $\kappa\in \K(X,r)$.
    Then, since by assumption $\kappa$ is $\rho$-invariant, \cref{prop:refl-bnsol}-2 ensures that $\kappa$ commutes with all $R_x$.
    As a consequence, for all $x,y \in X$ the following holds:
    \begin{align*}
        \kappa\lambda_x(y) &= \kappa\rho^{-1}_{\rho_y(x)}R_{\rho_y(x)}(y) = \rho^{-1}_{\rho_y(x)}R_{\rho_y(x)}\kappa(y) = \lambda_x\kappa(y)\,,
    \end{align*}
    proving the first part.
    Conversely, the assertion easily follows from \cref{prop:refl-bnsol}-2. since the hypothesis yields that $\kappa$ satisfies $\kappa R_x =R_x\kappa$, for every $x\in X$. 
\end{proof}
\end{cor}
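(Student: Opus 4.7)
The plan is to invoke \cref{prop:refl-bnsol}-2., which under the $\rho$-invariance hypothesis identifies $\kappa\in\K(X,r)$ with the condition that $\kappa$ commutes with every right multiplication $R_x$ of $(X,\triangleleft_r)$. The task then reduces to translating between the commutations $\kappa R_x = R_x\kappa$ and $\kappa\lambda_x = \lambda_x\kappa$. The bridge will be the identity from \cref{sol:lnd}-2., namely
\begin{align*}
    \lambda_x(y) = \rho^{-1}_{\rho_y(x)}R_{\rho_y(x)}(y),
\end{align*}
which, writing $z := \rho_y(x)$, rearranges to $R_z(y) = \rho_z\lambda_{\rho_y^{-1}(z)}(y)$. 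I will silently use that $\rho$-centralizing forces $\kappa$ also to commute with $\rho_x^{-1}$, since every $\rho_x$ is bijective by right non-degeneracy.

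For the forward direction, assuming $\kappa\in\K(X,r)$ I would compute
\begin{align*}
    \kappa\lambda_x(y) = \kappa\rho^{-1}_{\rho_y(x)}R_{\rho_y(x)}(y) = \rho^{-1}_{\rho_y(x)}\kappa R_{\rho_y(x)}(y) = \rho^{-1}_{\rho_y(x)}R_{\rho_y(x)}\kappa(y),
\end{align*}
using $\rho$-centralizing and then the reflection property. Since $\rho$-invariance yields $\rho_y(x) = \rho_{\kappa(y)}(x)$, the bridge identity applied at $\kappa(y)$ rewrites the right-hand side as $\lambda_x\kappa(y)$, giving $\lambda$-centralization.

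Conversely, assuming $\kappa$ is $\lambda$-centralizing, to verify $\kappa\in\K(X,r)$ I would establish $\kappa R_z = R_z\kappa$ for every $z$ via
\begin{align*}
    \kappa R_z(y) = \rho_z\kappa\lambda_{\rho_y^{-1}(z)}(y) = \rho_z\lambda_{\rho_y^{-1}(z)}\kappa(y) = \rho_z\lambda_{\rho_{\kappa(y)}^{-1}(z)}\kappa(y) = R_z\kappa(y),
\end{align*}
where the three nontrivial equalities use $\rho$-centralizing, $\lambda$-centralizing, and $\rho$-invariance respectively, after which \cref{prop:refl-bnsol}-2. delivers the conclusion.

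The only genuine difficulty I anticipate is bookkeeping: keeping the subscripts and the three commutation hypotheses aligned so that each index substitution is justified by exactly one of $\rho$-centralizing, $\rho$-invariance, or $\lambda$-centralizing. Beyond that the argument is a direct chaining of commutation relations around the bridge identity, so I expect no deeper obstacle.
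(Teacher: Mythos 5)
Your proposal is correct and follows essentially the same route as the paper: both directions reduce to the equivalence $\kappa\in\K(X,r)\iff\kappa R_x=R_x\kappa$ from \cref{prop:refl-bnsol}-2.\ under $\rho$-invariance, and then translate between commutation with the $R_x$ and with the $\lambda_x$ via the identity $\lambda_x(y)=\rho^{-1}_{\rho_y(x)}R_{\rho_y(x)}(y)$. The only difference is that you make explicit the computation for the converse (and the use of $\rho$-invariance to replace $\rho_{\kappa(y)}$ by $\rho_y$ in the indices), which the paper dismisses as "easily follows"; your bookkeeping there is accurate.
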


\smallskip

In a general setting and inspired by \cite[Proposition 3.5]{LeVe22}, we show how to obtain reflections for arbitrary solutions starting from a reflection and maps that are $\lambda, \rho$-centralizing and $\lambda, \rho$-invariant. 

\begin{theor}\label{prop:gen_caseLV22}
    Let $\left(X,r\right)$ be a solution, $\kappa\in \K\left(X,r\right)$, and $\varphi, \psi:X\to X$ maps that are $\lambda, \rho$-centralizing and $\lambda, \rho$-invariant. Then, the map $\omega:= \varphi\kappa\psi\in\K\left(X,r\right)$.
    \begin{proof}
        Initially,  observe that
    \begin{itemize}
        \item[-] $\varphi$ is $\lambda$-centralizing and $\rho$-invariant if and only if \ $r\left(\id_X\times\varphi\right)= \left(\varphi\times\id_X\right)r$;
        \item[-] $\varphi$ is $\rho$-centralizing and $\lambda$-invariant if and only if \ 
        $\left(\id_X\times\varphi\right)r = r\left(\varphi\times\id_X\right)$.
    \end{itemize}
  Similarly, the equivalences hold for the map $\psi$.  By these equivalences, we get
\begin{align*}
    r\left(\id_X\times \omega\right)r\left(\id_X\times \omega\right) &=\left(\varphi\times\varphi\right)
    r\left(\id_X\times \kappa\right)r \left(\id_X\times \kappa\right)
     \left(\psi\times\psi\right)\\
\left(\id_X\times \omega\right)r\left(\id_X\times \omega\right)r &= 
    \left(\varphi\times\varphi\right)
    \left(\id_X\times \kappa\right)r \left(\id_X\times \kappa\right)r
     \left(\psi\times\psi\right).
\end{align*}
Therefore, since $\kappa$ is a reflection, then using \eqref{RE}, $\omega$ is as well.
    \end{proof}
\end{theor}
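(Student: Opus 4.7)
The plan is to verify that $\omega=\varphi\kappa\psi$ satisfies \eqref{RE} by reducing it directly to the corresponding equation for $\kappa$. The strategy rests on two elementary equivalences which translate the combined centralizing and invariance hypotheses into intertwining relations with $r$: namely, a map $\eta:X\to X$ is $\lambda$-centralizing and $\rho$-invariant if and only if $r(\id_X\times\eta)=(\eta\times\id_X)r$, whereas $\eta$ is $\rho$-centralizing and $\lambda$-invariant if and only if $(\id_X\times\eta)r=r(\eta\times\id_X)$. Under the present hypotheses, both relations hold simultaneously for $\varphi$ and for $\psi$.

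First, I would verify these two equivalences by evaluating both sides on a generic pair $(x,y)$ and comparing components via the definitions of $\lambda_x$ and $\rho_y$. They then serve as the basic rules for shuffling $\varphi$ and $\psi$ across occurrences of $r$. An auxiliary observation is that maps acting on disjoint components commute freely: $(\eta\times\id_X)(\id_X\times\zeta)=(\id_X\times\zeta)(\eta\times\id_X)=(\eta\times\zeta)$. Note the asymmetric effect of the two intertwining rules: moving $\eta$ from one side of $r$ to the other always swaps the component on which it acts.

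Next, I would substitute $\omega=\varphi\kappa\psi$ into both sides of \eqref{RE} and, using the intertwining relations together with the disjoint-component commutations, factor $(\varphi\times\varphi)$ out on the left and $(\psi\times\psi)$ out on the right. For the left-hand side $r(\id_X\times\omega)r(\id_X\times\omega)$, applying the first equivalence to each occurrence of $(\id_X\times\varphi)$ pulls a $(\varphi\times\id_X)$ in front of the corresponding $r$; a further use of the second equivalence, this time for $\psi$, pushes the $\psi$'s rightward. Combining these migrations with component-swap commutations yields
\[
r(\id_X\times\omega)r(\id_X\times\omega)=(\varphi\times\varphi)\,r(\id_X\times\kappa)r(\id_X\times\kappa)\,(\psi\times\psi).
\]
An entirely analogous manipulation applied to $(\id_X\times\omega)r(\id_X\times\omega)r$ produces
\[
(\id_X\times\omega)r(\id_X\times\omega)r=(\varphi\times\varphi)\,(\id_X\times\kappa)r(\id_X\times\kappa)r\,(\psi\times\psi).
\]

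Finally, since $\kappa\in\K(X,r)$, the two middle expressions coincide by \eqref{RE}, so the two displayed identities are equal and $\omega\in\K(X,r)$. I expect the main obstacle to be purely bookkeeping: orchestrating the sequence of intertwinings and component-swap commutations so that the $\varphi$'s migrate leftward and the $\psi$'s migrate rightward without leaving stray factors in between. There is no conceptual difficulty beyond tracking carefully, at each step, which particular instance of $\varphi$ or $\psi$ is being moved.
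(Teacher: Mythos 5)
Your proposal is correct and follows essentially the same route as the paper: you establish the same two intertwining equivalences for $\lambda,\rho$-centralizing and $\lambda,\rho$-invariant maps, use them to migrate $\varphi$ leftward and $\psi$ rightward across the occurrences of $r$, arrive at the same two factored identities, and conclude from \eqref{RE} for $\kappa$. No gaps.
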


\begin{rem}
    Notice that if $(X, r)$ is an involutive solution, then, for the maps $\varphi, \psi$, the assumptions of being $\lambda$-invariant and $\rho$-centralizing are redundant. Hence, the result in  \cite[Proposition 3.5]{LeVe22} is a special case of \cref{prop:gen_caseLV22}. 
\end{rem}

\smallskip

The following is an example of a bijective non-degenerate solution admitting reflections which are $\lambda$-centralizing, $\rho$-invariant and neither of both.

\begin{ex}
    Let $X=\{1,\dots,8\}$ and consider the bijective non-degenerate solution $(X, r)$ given by
    \begin{align*}
        \lambda_1 = \lambda_2 = \lambda_7 = \lambda_8 = \id_X \,, \quad
        &\lambda_3 = \lambda_4 = \lambda_5 = \lambda_6 = (35)(46)(78) \,, \\
        \rho_1 = \rho_2 = \id_X \,,\quad \rho_3 = \rho_4 = (3645) \,,\quad &\rho_5 = \rho_6 = (3546) \,,\quad \rho_7 = \rho_8 = (34)(56) \,.
    \end{align*}
    With the aid of the \texttt{GAP} software \cite{GAP4}, we determined that there are 128 reflections for $(X,r)$.
    For example, among these we have the following:
    \begin{itemize}
        \item[-] $\kappa_1 = 21222211$ is a $\lambda$-centralizing reflection which is not $\rho$-invariant;
        \item[-] $\kappa_2 = 11346578$ is $\rho$-invariant reflection which is not $\lambda$-centralizing;
        \item[-] $\kappa_3 = 21436578$ is a $\lambda$-centralizing and $\rho$-invariant reflection;
        \item[-] $\kappa_4 = 12556611$ is a reflection which is neither $\lambda$-centralizing nor $\rho$-invariant.
    \end{itemize}
    For the first three examples we employed \cref{prop:refl-bnsol} to verify that they actually are reflections. Moreover, there are 64 which are neither $\lambda$-centralizing nor $\rho$-invariant.
    However, by applying \cref{prop:gen_caseLV22}, we can recover each of them in the form $\varphi\kappa\psi$ for some $\lambda,\rho$-centralizing and $\lambda,\rho$-invariant maps $\varphi$, $\psi$ (different from the identity) and for some $\kappa \in \K(X,r)$.
    For instance, if $\kappa_4$ is as in the example above, then $\kappa_4 = \varphi\kappa\psi$ where $\varphi = 21436587 = \psi$ and $\kappa = 12556611$.
\end{ex}

\smallskip

Finally, we provide a class of bijective non-degenerate solutions, including the solution in \cref{soluzione_assoc_rack}, whose reflections are not necessarily $\lambda$-centralizing or $\rho$-invariant.

\begin{ex}
    Let $(X,\triangleright)$ a left rack and $\varphi \in \Aut(X,\triangleright)$.
    Then, \cite[Theorem 2.15]{DoRySt24x} ensures that the map 
  $r(x,y) 
        =\left(\varphi(y),L_y\varphi^{-1}(x)\right)\,$, 
    for all $x,y \in X$, is a bijective non-degenerate solution.
    An easy application of \cref{lemma_rifl_qt} allows to determine that 
    \begin{center}
  $ \kappa\in \K(X,r) \iff \ 
            \begin{cases}
            \ \forall\, x\in X \quad \kappa L_x = \kappa L_{\kappa(x)}& \\
            \ \  \kappa\in\End\left(X, \triangleright\right)     \end{cases}$
  \end{center}  
  In particular, observe that 
  $\kappa$ is $\lambda$-centralizing if and only if $\kappa \varphi = \varphi \kappa$.   On the other hand, $\kappa$ is $\rho$-invariant if and only if $\kappa$ is $L$-invariant.
  For instance, let $(X,\triangleright)$ be the dihedral quandle of order $n \neq 1,2$ on $\Z_n$, $\kappa$ the constant map of constant value $0 \in \Z_n$, and notice that $\kappa \in \K(X)$ is neither $\lambda$-centralizing nor $\rho$-invariant.
\end{ex}

For other observations, we refer the reader to the appendix \cref{sec:appendix} of this document.

\medskip

\section{Constructions of reflections}
One can find in the literature some techniques to construct solutions from already known solutions, such as the \emph{twisted unions} of involutive non-degenerate solutions \cite{ESS99}, the \emph{matched product} \cite{CCoSt20}, and the \emph{strong semilattice} of solutions \cite{CCoSt20x-2}. In this context, this section is dedicated to providing methods for constructing reflections of these solutions starting from reflections on the known solutions.

\subsection{The reflection of the matched product of two solutions}

Let us begin by recalling the essential notions contained in \cite{CCoSt20}. 
If $(S,r_S)$ and $(T, r_T)$ are solutions that, hereinafter,  we write as $r_S\left(a, b\right) = \left(\lambda_a\left(b\right), \rho_b\left(a\right)\right)$, for all $a,b\in S$, and $r_T\left(u,v\right) = \left(\lambda_u\left(v\right),\rho_v\left(u\right)\right)$, for all $u,v\in T$, and $\alpha:T\to\Sym\left(S\right)$ and $\beta:S\to\Sym\left(T\right)$ are maps, set $\alpha_u:=\alpha\left(u\right)$, for every $u \in T$, and $\beta_a:=\beta\left(a\right)$, for every $a\in S$. Then, the quadruple $\left(r_S,r_T, \alpha,\beta\right)$ is said to be a \emph{matched product system of solutions} if the following conditions hold
	{\small
		\begin{center}
			\begin{minipage}[b]{.5\textwidth}
				\vspace{-\baselineskip}
				\begin{align}\label{eq:primo}\tag{s1}
					\alpha_u\alpha_v = \alpha_{\lambda_u\left(v\right)}\alpha_{\rho_{v}\left(u\right)}
				\end{align}
			\end{minipage}%
			\hfill\hfill\hfill
			\begin{minipage}[b]{.5\textwidth}
				\vspace{-\baselineskip}
				\begin{align}\label{eq:secondo}\tag{s2}
					\beta_a\beta_b=\beta_{\lambda_a\left(b\right)}\beta_{\rho_b\left(a\right)}
				\end{align}
			\end{minipage}
		\end{center}
		\begin{center}
			\begin{minipage}[b]{.5\textwidth}
				\vspace{-\baselineskip}
				\begin{align}\label{eq:quinto}\tag{s3}
					\rho_{\alpha^{-1}_u\!\left(b\right)}\alpha^{-1}_{\beta_a\left(u\right)}\left(a\right) = \alpha^{-1}_{\beta_{\rho_b\left(a\right)}\beta^{-1}_b\left(u\right)}\rho_b\left(a\right)
				\end{align}
			\end{minipage}%
			\hfill\hfill
			\begin{minipage}[b]{.5\textwidth}
				\vspace{-\baselineskip}
				\begin{align}\label{eq:sesto}\tag{s4}
					\rho_{\beta^{-1}_a\!\left(v\right)}\beta^{-1}_{\alpha_u\left(a\right)}\left(u\right) = \beta^{-1}_{\alpha_{\rho_v\left(u\right)}\alpha^{-1}_v\left(a\right)}\rho_v\left(u\right)
				\end{align}
			\end{minipage}
		\end{center}
		\begin{center}
			\begin{minipage}[b]{.5\textwidth}
				\vspace{-\baselineskip}
				\begin{align}\label{eq:terzo}\tag{s5}
					\lambda_a\alphaa{}{\beta^{-1}_{a}\left(u\right)}{}= \alphaa{}{u}{\lambdaa{\alphaa{-1}{u}{\left(a\right)}}{}}
				\end{align}
			\end{minipage}%
			\hfill\hfill
			\begin{minipage}[b]{.5\textwidth}
				\vspace{-\baselineskip}
				\begin{align}\label{eq:quarto}\tag{s6}
					\lambdaa{u}{\betaa{}{\alphaa{-1}{u}{\left(a\right)}}{}}=\betaa{}{a}{\lambdaa{\beta^{-1}_{a}\left(u\right)}{}}
				\end{align}
			\end{minipage}
		\end{center}
	}
	\noindent for all $a,b \in S$ and $u,v \in T$.\\
	As shown in \cite[Theorem 2]{CCoSt20}, any matched product system of solutions $\left(r_S, \, r_T,\, \alpha,\,\beta\right)$ determines a new solution  $r_S\bowtie r_T:(S{ \times} T)^2\to (S{ \times} T)^2$ defined by
		\begin{align*}
			&r_S\bowtie r_T\left(\left(a, u\right), 
			\left(b, v\right)\right) := 
			\left(\left(\alphaa{}{u}{\lambdaa{\bar{a}}{\left(b\right)}},\, \beta_a\lambdaa{\bar{u}}{\left(v\right)}\right),\ \left(\alphaa{-1}{\overline{U}}{\rhoo{\alphaa{}{\bar{u}}{\left(b\right)}}{\left(a\right)}},\,  \beta^{-1}_{\overline{A}}\rhoo{\beta_{\bar{a}}\left(v\right)}{\left(u\right)}\right) \right),
		\end{align*}
		\noindent where we set
		\begin{center}
		   $\bar{a}:=\alphaa{-1}{u}{\left(a\right)}$, \,\,$\bar{u}:= \beta^{-1}_{a}\left(u\right)$,\,\, $A:=\alphaa{}{u}{\lambdaa{\bar{a}}{\left(b\right)}}$,\, $U:=\beta_a\lambdaa{\bar{u}}{\left(v\right)}$,\,\, $\overline{A}:=\alphaa{-1}{U}{\left(A\right)}$,\,\, $\overline{U}:= \beta^{-1}_{A}\left(U\right)$,
		\end{center}
 		for all $\left(a,u\right),\left(b,v\right)\in S\times T$, called the \emph{matched product of the solutions} $r_S$ and $r_T$ (via $\alpha$ and $\beta$).\\
In particular, if the solutions $(S, r_S)$ and $(T, r_T)$ we start from are bijective with finite order, then $r_S\bowtie r_T$ is bijective with finite order. In addition, if $(S, r_S)$ and $(T, r_T)$ are non-degenerate, then $r_S\bowtie r_T$ is left non-degenerate. Thus, in the finite case, $r_S\bowtie r_T$ is also right non-degenerate.
		
\smallskip

In the following lemmas, consistently with maps $t$ and $q$ introduced in \cref{lemma_rifl_qt}, let us consider the maps $t,q$ from $\left(S\times T\right)\times\left(S\times T\right)$ defined by
$$
t\left(\left(a,u\right), \left(b,v\right)\right):= \lambda_{\lambda_{\left(a,u\right)}\left(b,v\right)}\left(\kappa\times \omega\right)\rho_{\left(b,v\right)}\left(a,u\right)
$$ 
and 
$$
q\left(\left(a,u\right), \left(b,v\right)\right):= \rho_{\left(\kappa\times \omega\right)\rho_{\left(b,v\right)}\left(a,u\right)}\lambda_{\left(a,u\right)}\left(b,v\right),
$$
for all $\left(a,u\right), \left(b,v\right)\in S\times T$, where $\kappa:S\to S$ and $\omega:T\to T$ are maps.

\begin{lemma}\label{lemma1}
	Let $\left(r_{S}, r_{T}, \alpha, \beta\right)$ be a matched product system of solutions, $\kappa:S\to S$ and $\omega:T\to T$ maps such that 
	\begin{align}\label{cond_match_ref}\tag{M}
	    \alpha_{u}\kappa &= \kappa\alpha_{u} \qquad \text{and} \qquad \beta_{a}\omega = \omega\beta_{a},
	\end{align} for all $u\in T$ and $a\in S$. 
 Set $\mathcal{A}:= \lambdaa{a}{\alphaa{}{\bar{u}}{\kappa\left(b\right)}}$
		and $\mathcal{U}:= \lambdaa{u}{\betaa{}{\bar{a}}{\omega\left(v\right)}}$, for all $a, b \in S$ and $u, v\in T$, then the following statements hold: 
	\begin{enumerate}
	    \item $t\left(\left(a,u\right), \left(\kappa\left(b\right),\omega\left(v\right)\right)\right)=
		\left(\lambdaa{\mathcal{A}}{\kappa\rhoo{\alphaa{}{\bar{u}}{\kappa\left(b\right)}}{\left(a\right)}}, 
		\lambdaa{\mathcal{U}}{\omega\rhoo{\betaa{}{\bar{a}}{\omega\left(v\right)}}{\left(u\right)}}\right)$,
		\item $t\left(\left(a,u\right), \left(b,v\right)\right)= \left(\lambdaa{A}{\kappa\rhoo{\alphaa{}{\bar{u}}{\left(b\right)}}{\left(a\right)}}, \lambdaa{U}{\omega\rhoo{\betaa{}{\bar{a}}{\left(v\right)}}{\left(u\right)}}\right),
	$ 
	\end{enumerate}
		for all $\left(a,u\right), \left(b,v\right)\in S\times T$.
	\begin{proof}
		Let $(a,u), (b,v) \in S \times T$. We show the proof for the first components of the identities in $1.$ and $2.$, since the proof for the second ones will be similar, by exchanging the roles of $a$ and $u$, $b$ and $v$, $\mathcal{A}$ and $\mathcal{U}$, $\alpha$ and $\beta$, and $\kappa$ and $\omega$, and by using $\left(s6\right)$ instead of $\left(s5\right)$. We have that \begin{align*}
		    t\left(\left(a,u\right), \left(\kappa\left(b\right),\omega\left(v\right)\right)\right)&=\lambda_{\left(\alpha_u\lambda_{\overline{a}}\kappa(b), \beta_a\lambda_{\overline u}\omega(v)\right)}\kappa \times \omega \left(\alpha_{\overline U}\rho_{\alpha_{\overline u}\kappa (b)}(a), \beta^{-1}_{\overline A}\rho_{\beta_a\omega(v)}(u)\right)\\
            &=\lambda_{\left(\lambda_a\alpha_{\overline{u}}\kappa(b), \lambda_u\beta_{\overline a}\omega(v)\right)}\kappa \times \omega \left(\alpha_{\overline U}\rho_{\alpha_{\overline u}\kappa (b)}(a), \beta^{-1}_{\overline A}\rho_{\beta_a\omega(v)}(u)\right)\\
&=\lambda_{\left(\mathcal{A}, \, \mathcal{U}\right)} \left(\kappa \alpha_{\overline U}\rho_{\alpha_{\overline u}\kappa (b)}(a), \omega\beta^{-1}_{\overline A}\rho_{\beta_a\omega(v)}(u)\right)
		\end{align*}
where in the second to last equality we use $\left(s5\right)$ and $\left(s6\right)$.
        Thus, the first component of $t\left(\left(a,u\right), \left(\kappa\left(b\right),\omega\left(v\right)\right)\right)$ is given by
  \begin{align*}
   \alphaa{}{\mathcal{U}}{\lambdaa{\overline{\mathcal{A}}}{\kappa\alphaa{-1}{\overline{\mathcal{U}}}{\rhoo{\alphaa{}{\bar{u}}{\kappa\left(b\right)}}{\left(a\right)}}}}\underset{\eqref{cond_match_ref}}{=}\alphaa{}{\mathcal{U}}{\lambdaa{\overline{\mathcal{A}}}{\alphaa{-1}{\overline{\mathcal{U}}}{\kappa\rhoo{\alphaa{}{\bar{u}}{\kappa\left(b\right)}}{\left(a\right)}}}}\underset{\left(s5\right)}{=}\lambdaa{\mathcal{A}}{\kappa\rhoo{\alphaa{}{\bar{u}}{\kappa\left(b\right)}}{\left(a\right)}}
  \end{align*}
and the first component of $t\left(\left(a,u\right), \left(b,v\right)\right)$ is
\begin{align*}
    \alphaa{}{U}{\lambdaa{\overline{A}}{\kappa\alphaa{-1}{\overline{U}}{\rhoo{\alphaa{}{\bar{u}}{\left(b\right)}}{\left(a\right)}}}}\underset{\eqref{cond_match_ref}}{=}  \alphaa{}{U}{\lambdaa{\overline{A}}{\alphaa{-1}{\overline{U}}{\kappa\rhoo{\alphaa{}{\bar{u}}{\left(b\right)}}{\left(a\right)}}}}\underset{\left(s5\right)}{=}\lambdaa{A}{\kappa\rhoo{\alphaa{}{\bar{u}}{\left(b\right)}}{\left(a\right)}},
\end{align*}
		which complete the proof.
	\end{proof}
\end{lemma}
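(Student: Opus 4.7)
The plan is to unfold the definition of $t$ directly from the matched product formula and then apply the commutation conditions \eqref{cond_match_ref} together with the compatibilities \eqref{eq:terzo} and \eqref{eq:quarto} to bring the result into the stated form. Throughout, I keep the notation $\bar{a}=\alpha_u^{-1}(a)$, $\bar{u}=\beta_a^{-1}(u)$, $A=\alpha_u\lambda_{\bar{a}}(b)$, $U=\beta_a\lambda_{\bar{u}}(v)$, $\overline{A}=\alpha_U^{-1}(A)$, $\overline{U}=\beta_A^{-1}(U)$, exactly as in the definition of $r_S\bowtie r_T$.

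First, I would establish part $2.$ Expanding $\rho_{(b,v)}(a,u)$ via the matched product formula yields the pair $(\alpha^{-1}_{\overline{U}}\rho_{\alpha_{\bar u}(b)}(a),\ \beta^{-1}_{\overline{A}}\rho_{\beta_{\bar a}(v)}(u))$. Applying $\kappa\times\omega$ and using \eqref{cond_match_ref} to push $\kappa$ past $\alpha^{-1}_{\overline{U}}$ and $\omega$ past $\beta^{-1}_{\overline{A}}$, the pair becomes $(\alpha^{-1}_{\overline{U}}\kappa\rho_{\alpha_{\bar u}(b)}(a),\ \beta^{-1}_{\overline{A}}\omega\rho_{\beta_{\bar a}(v)}(u))$. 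The outer operator $\lambda_{(A,U)}$ acts componentwise as $\alpha_U\lambda_{\overline{A}}(\,\cdot\,)$ and $\beta_A\lambda_{\overline{U}}(\,\cdot\,)$; an application of \eqref{eq:terzo} in the form $\alpha_U\lambda_{\overline{A}}\alpha^{-1}_{\overline{U}}=\lambda_A$, and the analogous consequence $\beta_A\lambda_{\overline{U}}\beta^{-1}_{\overline{A}}=\lambda_U$ of \eqref{eq:quarto}, collapse the expression to exactly $(\lambda_A\kappa\rho_{\alpha_{\bar u}(b)}(a),\ \lambda_U\omega\rho_{\beta_{\bar a}(v)}(u))$, which is what part $2.$ claims.

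Part $1.$ is obtained by running the same computation for $((a,u),(\kappa(b),\omega(v)))$ in place of $((a,u),(b,v))$. The only additional observation is that, after substitution, the quantities playing the role of $A$ and $U$ become $\alpha_u\lambda_{\bar{a}}\kappa(b)$ and $\beta_a\lambda_{\bar{u}}\omega(v)$, which by \eqref{eq:terzo} and \eqref{eq:quarto} coincide with $\mathcal{A}=\lambda_a\alpha_{\bar{u}}\kappa(b)$ and $\mathcal{U}=\lambda_u\beta_{\bar{a}}\omega(v)$. Crucially, this simplification is at the level of \emph{indices}, so it does not require any extra commutation hypothesis.

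The main obstacle I expect is the sheer bookkeeping: the quantities $\overline{A},\overline{U}$ depend on $b,v$ through $A,U$, so when substituting $\kappa(b),\omega(v)$ they change, and one must verify that \eqref{cond_match_ref} is applied at indices in $T$ (resp.\ $S$) for which it is valid. Since \eqref{cond_match_ref} is stated for all $u\in T$ and $a\in S$, this is automatic, and the argument reduces to a careful sequence of unfold--commute--apply steps. Symmetry in the $S$ and $T$ components means that the second coordinate is handled by the obvious dualization (swap the roles of $a\leftrightarrow u$, $b\leftrightarrow v$, $\alpha\leftrightarrow\beta$, $\kappa\leftrightarrow\omega$, and use \eqref{eq:quarto} in place of \eqref{eq:terzo}), so effectively only one computation has to be carried out in detail.
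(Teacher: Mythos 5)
Your proposal is correct and follows essentially the same route as the paper: unfold $t$ via the matched product formula, use \eqref{cond_match_ref} to commute $\kappa$ (resp.\ $\omega$) past $\alpha^{-1}_{\overline{U}}$ (resp.\ $\beta^{-1}_{\overline{A}}$), and then apply \eqref{eq:terzo}--\eqref{eq:quarto} both to collapse $\alpha_U\lambda_{\overline{A}}\alpha^{-1}_{\overline{U}}$ into $\lambda_A$ and to rewrite the indices as $\mathcal{A}$, $\mathcal{U}$. The only difference is the order in which you treat the two items, which is immaterial.
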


\smallskip

\begin{lemma}\label{lemma2}
   Let $\left(r_{S}, r_{T}, \alpha, \beta\right)$ be a matched product system of solutions, $\kappa:S\to S$ and $\omega:T\to T$ maps such that \eqref{cond_match_ref} holds.  Set 
   \begin{align*}
       &\mathcal{A'}:= \alphaa{}{\mathcal{U}}{\lambdaa{\overline{\mathcal{A}}}{\left(\kappa\alphaa{-1}{\overline{\mathcal{U}}}{\rhoo{\alphaa{}{\bar{u}}{\kappa\left(b\right)}}{\left(a\right)}}\right)}} \qquad \mathcal{U'}:= \betaa{}{\mathcal{A}}{\lambdaa{\overline{\mathcal{U}}}{\left(\omega\betaa{-1}{\overline{\mathcal{A}}}{\rhoo{\betaa{}{\bar{a}}{\omega\left(v\right)}}{\left(u\right)}}\right)}}\\
       &\mathcal{A''}:= \alphaa{}{U}{\lambdaa{\overline{A}}{\left(\kappa\alphaa{-1}{\overline{U}}{\rhoo{\alphaa{}{\bar{u}}{\left(b\right)}}{\left(a\right)}}\right)}} \qquad \mathcal{U''}:= \betaa{}{A}{\lambdaa{\overline{U}}{\left(\omega\betaa{-1}{\overline{A}}{\rhoo{\betaa{}{\bar{a}}{\left(v\right)}}{\left(u\right)}}\right)}},
   \end{align*}for all $a\in S$ and $u\in T$.
		Then, the following statements hold:  
			\begin{enumerate}
	    \item $q\left( \left(a,u\right),\left(\kappa\left(b\right), \omega\left(v\right)\right)  \right)=\left(\alphaa{-1}{\overline{\mathcal{U'}}}{\rhoo{\kappa\rhoo{\kappa\alphaa{}{\bar{u}}{\left(b\right)}}{\left(a\right)}}{\lambdaa{a}{\kappa\alphaa{}{\bar{u}}{\left(b\right)}}}},
		\betaa{-1}{\overline{\mathcal{A'}}}{\rhoo{\omega\rhoo{\omega\betaa{}{\bar{a}}{\left(v\right)}}{\left(u\right)}}{\lambdaa{u}{\omega\betaa{}{\bar{a}}{\left(v\right)}}}}\right),
		$ 
		\item $\left(\kappa \times \omega \right)q\left(\left(a,u\right), \left(b,v\right)\right)= \left(\alphaa{-1}{\overline{\mathcal{U''}}}{\kappa\rhoo{\kappa\alphaa{}{\bar{u}}{\left(b\right)}}{\lambdaa{a}{\alphaa{}{\bar{u}}{\left(b\right)}}}},
		\betaa{-1}{\overline{\mathcal{A''}}}{\omega\rhoo{\omega\rhoo{\betaa{}{\bar{a}}{\left(v\right)}}{\left(u\right)}}{\lambdaa{u}{\betaa{}{\bar{a}}{\left(v\right)}}}}
		\right)$, 	
	\end{enumerate}
for all $\left(a,u\right), \left(b,v\right)\in S\times T$.
\begin{proof}
Similar to the proof of \cref{lemma1}, applying the conditions \eqref{cond_match_ref} and $\left(s5\right)$-$\left(s6\right)$.
\end{proof}
\end{lemma}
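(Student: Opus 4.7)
The plan is to mirror the computation carried out in \cref{lemma1}, now applied to the $q$-map instead of the $t$-map. As in that lemma, I will only spell out the first (i.e., $S$-)component of each identity, since the $T$-component will then follow by swapping the roles of $a$ and $u$, $b$ and $v$, $\alpha$ and $\beta$, $\kappa$ and $\omega$, and using \eqref{eq:quarto} in place of \eqref{eq:terzo}.

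\medskip

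\noindent\textbf{Step 1: unfolding $q$.} For any $(a,u),(b,v)\in S\times T$, using the explicit form of $r_S\bowtie r_T$, I would write
\begin{align*}
q\bigl((a,u),(b,v)\bigr)
=\rho_{(\kappa\times\omega)\left(\alpha^{-1}_{\overline U}\rho_{\alpha_{\bar u}(b)}(a),\ \beta^{-1}_{\overline A}\rho_{\beta_{\bar a}(v)}(u)\right)}
\bigl(A,\,U\bigr)\,,
\end{align*}
and analogously for the shifted input $(\kappa(b),\omega(v))$, where on the $S$-side $A$ gets replaced by $\mathcal{A}=\lambda_a\alpha_{\bar u}\kappa(b)$ and $\overline{A}$ by $\overline{\mathcal A}$, etc.

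\medskip

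\noindent\textbf{Step 2: part (1).} I would substitute $(\kappa(b),\omega(v))$ into the above and read off the first component as
$$
\alpha^{-1}_{\overline{\mathcal U'}}\rho_{\alpha_{\bar u}\kappa(b)}\bigl(\kappa\alpha^{-1}_{\overline{\mathcal U}}\rho_{\alpha_{\bar u}\kappa(b)}(a)\bigr)\cdot\lambda_{\mathcal A}(\ldots)
$$
after simplification according to the formula for $\rho_{(-,-)}$. Two moves convert this into the stated expression. First, the commutation $\alpha_u\kappa=\kappa\alpha_u$ from \eqref{cond_match_ref} pulls $\kappa$ across $\alpha_{\bar u}$ and $\alpha^{-1}_{\overline{\mathcal U}}$, turning terms of the form $\kappa\alpha_{\bar u}(b)$ into $\alpha_{\bar u}\kappa(b)$ where needed. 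Second, identity \eqref{eq:terzo}, applied in the same direction as in \cref{lemma1}, allows one to rewrite $\alpha_{?}\lambda_{?}$ as $\lambda_{?}\alpha_{?}$ so that the outer $\rho$ is reshaped into $\rho_{\kappa\rho_{\kappa\alpha_{\bar u}(b)}(a)}\lambda_a\kappa\alpha_{\bar u}(b)$, giving exactly the claimed first component. The second component is obtained by the symmetric argument using \eqref{eq:quarto}.

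\medskip

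\noindent\textbf{Step 3: part (2).} For $(\kappa\times\omega)\,q((a,u),(b,v))$ I would start from Step~1 with $(b,v)$ unshifted, then apply $\kappa\times\omega$ to the output. Now $\kappa$ and $\omega$ appear on the \emph{outside} of the $\alpha^{-1}$ and $\beta^{-1}$ terms, so I would use \eqref{cond_match_ref} again to push them inside, converting $\kappa\alpha^{-1}_{\overline U}$ into $\alpha^{-1}_{\overline U}\kappa$ and $\omega\beta^{-1}_{\overline A}$ into $\beta^{-1}_{\overline A}\omega$. After this, \eqref{eq:terzo} and \eqref{eq:quarto} rearrange the leftover $\lambda\alpha$ and $\lambda\beta$ blocks into the form displayed in the statement.

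\medskip

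\noindent\textbf{Main obstacle.} Nothing here is conceptually deep: the moves are only the two commutations \eqref{cond_match_ref} and the two exchange laws \eqref{eq:terzo}--\eqref{eq:quarto}, applied exactly as in \cref{lemma1}. The real difficulty is purely notational bookkeeping, namely keeping track of which overlined symbol ($\overline{A},\overline{U},\overline{\mathcal A},\overline{\mathcal U},\overline{\mathcal A'},\overline{\mathcal U'}$, etc.) appears at each stage and checking that the indices do line up after the swap via $(s5)/(s6)$; this is the point at which an error is most likely and where the proof in the file will need to be written out with some care.
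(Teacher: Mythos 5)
Your proposal is correct and follows essentially the same route as the paper, whose own proof of this lemma is simply the remark that the computation is analogous to that of \cref{lemma1}, using \eqref{cond_match_ref} together with $(s5)$--$(s6)$. Your plan — unfold $q$ via the explicit form of $r_S\bowtie r_T$, push $\kappa$ and $\omega$ across the $\alpha$'s and $\beta$'s by \eqref{cond_match_ref}, and then rearrange with $(s5)$ and $(s6)$, treating only the first component and obtaining the second by the stated symmetry — is exactly that argument.
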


\medskip

\begin{theor}\label{teo_matched}
	Let $\left(r_{S}, r_{T}, \alpha, \beta\right)$ be a matched product system of solutions, $\kappa:S\to S$ and $\omega:T\to T$ maps such that 
	\begin{align}
	    \alpha_{u}\kappa = \kappa\alpha_{u} \qquad \text{and} \qquad \beta_{a}\omega = \omega\beta_{a} \tag{M}
	\end{align} for all $a\in S$ and $u\in T$. Then, the following statements are equivalent: 
	\begin{enumerate}
		\item[(i)] $\kappa \in \K\left(S, r_{S}\right)$ and $\omega \in \K\left(T, r_{T}\right)$;
		\item[(ii)] $\kappa\times \omega \in \K\left( S \bowtie T, r_{S}\bowtie
		r_{T}\right)$.
	\end{enumerate}
	\begin{proof}
Initially, suppose that $\kappa \in \K(S, r_{S})$ and $\omega \in \K(T,r_{T})$. If $(a, u), (b, v) \in S \times T$, we obtain
		\begin{align*}
		&t\left(\left(a,u\right), \left(\kappa\left(b\right),\omega\left(v\right)\right)\right)
		= \left(\lambdaa{\mathcal{A}}{\kappa\rhoo{\alphaa{}{\bar{u}}{\kappa\left(b\right)}}{\left(a\right)}}, 
		\lambdaa{\mathcal{U}}{h\rhoo{\betaa{}{\bar{a}}{h\left(v\right)}}{\left(u\right)}}\right) &\mbox{by \cref{lemma1}-$1.$}\\
		&= \left(\lambdaa{\lambdaa{a}{\kappa\alphaa{}{\bar{u}}{\left(b\right)}}}{\kappa\rhoo{\kappa\alphaa{}{\bar{u}}{\left(b\right)}}{\left(a\right)}}, 
		\lambdaa{\lambdaa{u}{\omega\betaa{}{\bar{a}}{\left(v\right)}}}{\omega\rhoo{\omega\betaa{}{\bar{a}}{\left(v\right)}}{\left(u\right)}}\right)&\mbox{by \eqref{cond_match_ref}}\\
		&=\left(t\left(a, \kappa\alpha_{\bar{u}}\left(b\right)\right), t\left(u, \omega\beta_{\bar{a}}\left(v\right)\right)\right)\\
		&= \left(t\left(a, \alpha_{\bar{u}}\left(b\right)\right), t\left(u, \beta_{\bar{a}}\left(v\right)\right)\right)&\mbox{by \eqref{t}-\eqref{q}}\\
		&=\left(\lambdaa{A}{\kappa\rhoo{\alphaa{}{\bar{u}}{\left(b\right)}}{\left(a\right)}}, \lambdaa{U}{\omega\rhoo{\betaa{}{\bar{a}}{\left(v\right)}}{\left(u\right)}}\right)\\
		&= t\left(\left(a,u\right), \left(b,v\right)\right) &\mbox{by \cref{lemma1}-$2.$}
		\end{align*}
Moreover, it follows from the last computations and \eqref{cond_match_ref} that 
		\begin{align*}
		\mathcal{A'} &= \alphaa{}{\mathcal{U}}{\lambdaa{\overline{A}}{\left(\kappa\alphaa{-1}{\overline{\mathcal{U}}}{\rhoo{\alphaa{}{\bar{u}}{\kappa\left(b\right)}}{\left(a\right)}}\right)}}
		=\lambdaa{A}{\kappa\rhoo{\alphaa{}{\bar{u}}{\left(b\right)}}{\left(a\right)}}=\alphaa{}{U}{\lambdaa{\overline{A}}{\left(\kappa\alphaa{-1}{\overline{U}}{\rhoo{\alphaa{}{\bar{u}}{\left(b\right)}}{\left(a\right)}}\right)}}=\mathcal{A''}
		\end{align*}
		and similarly $\mathcal{U'} = \mathcal{U''}$.
		Hence, we have
		\begin{align*}
		&q\left( \left(a,u\right),\left(\kappa\left(b\right), \omega\left(v\right)\right)  \right)\\
		&=\left(\alphaa{-1}{\overline{\mathcal{U'}}}{\rhoo{\kappa\rhoo{\kappa\alphaa{}{\bar{u}}{\left(b\right)}}{\left(a\right)}}{\lambdaa{a}{\kappa\alphaa{}{\bar{u}}{\left(b\right)}}}},
		\betaa{-1}{\overline{\mathcal{A'}}}{\rhoo{\omega\rhoo{\omega\betaa{}{\bar{a}}{\left(v\right)}}{\left(u\right)}}{\lambdaa{u}{\omega\betaa{}{\bar{a}}{\left(v\right)}}}}\right) &\mbox{by \cref{lemma2}-$1.$}\\
		&=\left(\alpha^{-1}_{\overline{\mathcal{U'}}}q\left(a, \kappa\alphaa{}{\bar{u}}{\left(b\right)}\right), \betaa{-1}{\overline{\mathcal{A'}}}q\left(u, \omega\betaa{}{\bar{a}}{\left(v\right)}\right)\right)\\
		&=\left(\alpha^{-1}_{\overline{\mathcal{U'}}}\kappa\left(q\left(a, \alphaa{}{\bar{u}}{\left(b\right)}\right)\right), \betaa{-1}{\overline{\mathcal{A'}}}\omega\left(q\left(u,\betaa{}{\bar{a}}{\left(v\right)} \right)\right)\right) &\mbox{by \eqref{t}-\eqref{q}}\\
		&=\left(\alpha^{-1}_{\overline{\mathcal{U''}}}\kappa\left(q\left(a, \alphaa{}{\bar{u}}{\left(b\right)}\right)\right), \betaa{-1}{\overline{\mathcal{A''}}}\omega\left(q\left(u,\betaa{}{\bar{a}}{\left(v\right)} \right)\right)\right) &\mbox{since $\mathcal{A'} = \mathcal{A''}$ and $\mathcal{U'} = \mathcal{U''}$}\\
		&= \left(\alphaa{-1}{\overline{\mathcal{U''}}}{\kappa\rhoo{\kappa\alphaa{}{\bar{u}}{\left(b\right)}}{\lambdaa{a}{\alphaa{}{\bar{u}}{\left(b\right)}}}},
		\betaa{-1}{\overline{\mathcal{A''}}}{h\rhoo{h\rhoo{\betaa{}{\bar{a}}{\left(v\right)}}{\left(u\right)}}{\lambdaa{u}{\betaa{}{\bar{a}}{\left(v\right)}}}}
		\right)\\
		&= \left(\kappa \times \omega \right)q\left(\left(a,u\right), \left(b,v\right)\right) &\mbox{by \cref{lemma2}-$2.$}
		\end{align*}
Thus, $\kappa\times \omega \in \K\left(S \bowtie T, r_{S}\bowtie r_{T}\right)$.\\
Conversely, we suppose that $\kappa\times \omega$ is a reflection of $r_{S}\bowtie r_{T}$ and we show that $\kappa \in \K(S,r_S)$. The proof for the map $\omega$ will be omitted since it is analogous.\\
Let $(a, u), (b, v) \in S \times T$ and
\begin{align*}
   & \mathcal{A} := \lambdaa{a}{\alphaa{}{\bar{u}}{\kappa\left(\alphaa{-1}{\bar{u}}{\left(b\right)}\right)}}\qquad A := \alphaa{}{u}{\lambdaa{\bar{a}}{\left(\alphaa{-1}{\bar{u}}{\left(b\right)}\right)}},
\end{align*}
thus from the previous computations, since $\kappa\times \omega$ is a reflection, we get
\begin{align*}
    \lambdaa{\mathcal{A}}{\kappa\rhoo{\alpha_{\bar{u}}\kappa\left(\alphaa{-1}{\bar{u}}{\left(b\right)}\right)}{\left(a\right)}}=\lambdaa{A}{\kappa\rhoo{\alpha_{\bar{u}}\alphaa{-1}{\bar{u}}{\left(b\right)}}{\left(a\right)}},
\end{align*}
namely $
    \lambdaa{\mathcal{A}}{\kappa\rho_{\kappa\left(b\right)}\left(a\right)}=\lambdaa{A}{\kappa\rhoo{{\left(b\right)}}{\left(a\right)}}.$
Thus,  since by \eqref{cond_match_ref} and $\left(s_5\right)$,  
\begin{center}
$\mathcal{A} =  \lambdaa{a}{\alphaa{}{\bar{u}}{\kappa\left(\alphaa{-1}{\bar{u}}{\left(b\right)}\right)}} = \lambdaa{a}{\kappa\left(b\right)}$
	\qquad 	$A = \alphaa{}{u}{\lambdaa{\bar{a}}{\left(\alphaa{-1}{\bar{u}}{\left(b\right)}\right)}} = \lambdaa{a}{\left(b\right)}$, \end{center}
we have that
		\begin{align*}
		t\left(a,\kappa\left(b\right) \right)=\lambdaa{\lambdaa{a}{\kappa\left(b\right)}}{\kappa\rhoo{\kappa\left(b\right)}{\left(a\right)}}
		=
		\lambdaa{\mathcal{A}}{\kappa\rhoo{\kappa\left(b\right)}{\left(a\right)}}
		= \lambdaa{A}{\kappa\rhoo{b}{\left(a\right)}}
		= \lambdaa{\lambdaa{a}{\left(b\right)}}{\kappa\rhoo{b}{\left(a\right)}}=t\left(a,b\right).
		\end{align*}
 Moreover,
since
\begin{align*}
    \rhoo{\kappa\rhoo{\kappa\alphaa{}{\bar{u}}{\left(\alpha^{-1}_{\bar{u}}\left(b\right)\right)}}{\left(a\right)}}{\lambdaa{a}{\kappa\alphaa{}{\bar{u}}{\left(\alpha^{-1}_{\bar{u}}\left(b\right)\right)}}}=\kappa\rhoo{\kappa\alphaa{}{\bar{u}}{\left(\alpha^{-1}_{\bar{u}}\left(b\right)\right)}}{\lambdaa{a}{\alphaa{}{\bar{u}}{\left(\alpha^{-1}_{\bar{u}}\left(b\right)\right)}}},
 \end{align*}
we obtain $q(a, \kappa(b))=\kappa q(a,b)$, i.e., $\kappa$ is a reflection for the solution $r_S$.
\end{proof}
\end{theor}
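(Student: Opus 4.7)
The plan is to apply \cref{lemma_rifl_qt} to the solution $(S\bowtie T, r_S\bowtie r_T)$ with reflection candidate $\kappa\times\omega$, reducing the whole theorem to verifying the coordinate-wise identities \eqref{t} and \eqref{q}. The explicit forms of $t$ and $q$ on the matched product are precisely what \cref{lemma1} and \cref{lemma2} provide. Thus the entire argument is a rewriting game: combine the outputs of those two lemmas with the hypothesis \eqref{cond_match_ref} to show that each coordinate of the \eqref{t}/\eqref{q} identities for $\kappa\times\omega$ reduces to the corresponding identities for $\kappa$ (in the $S$-coordinate) and $\omega$ (in the $T$-coordinate).

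For the forward direction $(i)\Rightarrow(ii)$, I would start from the expression for $t\!\left((a,u),(\kappa(b),\omega(v))\right)$ supplied by \cref{lemma1}-1, use $\alpha_{\bar u}\kappa=\kappa\alpha_{\bar u}$ and $\beta_{\bar a}\omega=\omega\beta_{\bar a}$ to move $\kappa,\omega$ through the $\alpha,\beta$ maps, and then recognize the resulting first coordinate as $t_S(a,\kappa(\alpha_{\bar u}(b)))$. Applying \eqref{t} for $\kappa$ on $(S,r_S)$ (and symmetrically \eqref{t} for $\omega$ on $(T,r_T)$) turns this into the expression of \cref{lemma1}-2, i.e.\ $t\!\left((a,u),(b,v)\right)$. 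For \eqref{q}, I would carry out the same computation using \cref{lemma2}, observing that after \eqref{cond_match_ref} and the axioms \eqref{eq:terzo}--\eqref{eq:quarto} the auxiliary quantities satisfy $\mathcal{A}'=\mathcal{A}''$ and $\mathcal{U}'=\mathcal{U}''$, so that $q\!\left((a,u),(\kappa(b),\omega(v))\right)$ and $(\kappa\times\omega)\,q\!\left((a,u),(b,v)\right)$ differ only by applications of $q_S,q_T$ that cancel in virtue of \eqref{q} on the two factors.

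For the converse $(ii)\Rightarrow(i)$, the idea is to specialize the \eqref{t}/\eqref{q} identities for $\kappa\times\omega$ to elements that decouple the two coordinates. Concretely, to recover \eqref{t} for $\kappa$ on $(S,r_S)$, I would apply the identity from \cref{lemma1} with $b$ replaced by $\alpha_{\bar u}^{-1}(b)$; then using $\alpha_{\bar u}\kappa=\kappa\alpha_{\bar u}$ together with \eqref{eq:terzo}, the auxiliary quantity $\mathcal{A}$ collapses to $\lambda_a\kappa(b)$ and $A$ collapses to $\lambda_a(b)$, so that the first coordinate of the identity becomes exactly $t_S(a,\kappa(b))=t_S(a,b)$. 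An analogous specialization of the $q$-identity yields \eqref{q} for $\kappa$, and by swapping roles of the coordinates one obtains \eqref{t} and \eqref{q} for $\omega$ on $(T,r_T)$.

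The main obstacle is not conceptual but notational: keeping track of the numerous bars, overlines, and mutual dependencies between $\mathcal{A},\mathcal{U},A,U$ and their primed/doubly-primed versions while applying \eqref{cond_match_ref} and the axioms \eqref{eq:primo}--\eqref{eq:quarto} in the correct order. The decisive conceptual point, however, is transparent: the commutation hypothesis \eqref{cond_match_ref} is exactly what lets $\kappa\times\omega$ commute with the "twisting" part of $r_S\bowtie r_T$, after which the reflection equation factors cleanly into the $S$- and $T$-components.
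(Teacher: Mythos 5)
Your proposal follows essentially the same route as the paper's proof: reduce via \cref{lemma_rifl_qt} to the coordinate-wise identities \eqref{t} and \eqref{q}, use \cref{lemma1} and \cref{lemma2} together with \eqref{cond_match_ref} and \eqref{eq:terzo}--\eqref{eq:quarto} to factor these into the corresponding identities on $(S,r_S)$ and $(T,r_T)$, and for the converse specialize with $b$ replaced by $\alpha_{\bar u}^{-1}(b)$ so that $\mathcal{A}$ and $A$ collapse to $\lambda_a\kappa(b)$ and $\lambda_a(b)$. The key steps you identify, including the equalities $\mathcal{A}'=\mathcal{A}''$ and $\mathcal{U}'=\mathcal{U}''$, are exactly those used in the paper.
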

 
 \smallskip

According to \cite[Corollary 6]{CCoSt20}, in the case of left non-degenerate and involutive solutions, the requirements to have a matched product
system of solutions become easier. In particular,  a quadruple $\left(r_S, \, r_T,\, \alpha,\,\beta\right)$ is a matched product system of solutions if and only if they hold
	
	{\small
		\begin{center}
			\begin{minipage}[b]{.5\textwidth}
				\vspace{-\baselineskip}
				\begin{align*}
					\alpha_u\alpha_{\lambda_u^{-1}(v)}&=\alpha_v
\alpha_{\lambda_v^{-1}(u)}
				\end{align*}
			\end{minipage}%
			\hfill
			\begin{minipage}[b]{.5\textwidth}
				\vspace{-\baselineskip}
				\begin{align*}\beta_a\beta_{\lambda_a^{-1}(b)}=\beta_b
\beta_{\lambda_b^{-1}(a)} 
				\end{align*}
			\end{minipage}
		\end{center}
		\begin{center}
			\begin{minipage}[b]{.5\textwidth}
				\vspace{-\baselineskip}
				\begin{align*}\lambda_a\alpha_{\overline{u}}&=\alpha_u\lambda_{\overline{a}}
				\end{align*}
			\end{minipage}%
			\hfill
			\begin{minipage}[b]{.5\textwidth}
				\vspace{-\baselineskip}
				\begin{align*}\lambda_u\beta_{\overline{a}}=\beta_a\lambda_{\overline{u}}
				\end{align*}
			\end{minipage}
		\end{center}}
	\noindent for all $a, b \in S$ and $u,v \in T$.
 Note that one can easily choose $\alpha_u = \lambda_u$, for every $u\in T$, and $\beta_a = \lambda_a$, for every $a\in S$. Hence, we can specialize the \cref{teo_matched} for reflections of involutive left non-degenerate solutions that are $\lambda$-centralizing.

\begin{cor}\label{cor:mathc-inv-lnd}
    Let $(S,r_S)$ and $(T,r_T)$ be involutive left non-degenerate solutions and let $\left(r_{S}, r_{T}, \alpha, \beta\right)$ be a matched product system of solutions. Let us assume that $\kappa:S\to S$ and $\omega:T\to T$ are $\lambda$-centralizing reflections for $r_S$ and $r_T$, respectively, satisfying \eqref{cond_match_ref}.
Then, $\kappa \times \omega$ is a $\lambda$-centralizing reflection for $\left( S \bowtie T, r_{S}\bowtie
		r_{T}\right)$.
  \begin{proof}
      It is straightforward.
  \end{proof}
\end{cor}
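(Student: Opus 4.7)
The plan is to split the proof cleanly into two tasks: first, observe that $\kappa\times\omega\in\K(S\bowtie T,\, r_S\bowtie r_T)$ is an immediate application of \cref{teo_matched}, since the assumptions on $\kappa$ and $\omega$ (being reflections for $r_S$ and $r_T$, respectively, satisfying \eqref{cond_match_ref}) are exactly those needed for the implication $(i)\Rightarrow(ii)$ of that theorem to fire. Thus nothing new is required for this half.

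The remaining task is to verify that $\kappa\times\omega$ is $\lambda$-centralizing for the matched product, i.e. that $(\kappa\times\omega)\,\lambda_{(a,u)} = \lambda_{(a,u)}\,(\kappa\times\omega)$ for every $(a,u)\in S\times T$. I would first read off from the definition of $r_S\bowtie r_T$ that the first component of the matched product solution is
\begin{align*}
\lambda_{(a,u)}(b,v) \;=\; \bigl(\alpha_u\lambda_{\bar a}(b),\ \beta_a\lambda_{\bar u}(v)\bigr),
\end{align*}
where $\bar a = \alpha_u^{-1}(a)$ and $\bar u = \beta_a^{-1}(u)$. Since $\bar a$ and $\bar u$ depend only on $(a,u)$, the verification reduces to checking componentwise that
\begin{align*}
\kappa\,\alpha_u\lambda_{\bar a} \;=\; \alpha_u\lambda_{\bar a}\,\kappa
\qquad\text{and}\qquad
\omega\,\beta_a\lambda_{\bar u} \;=\; \beta_a\lambda_{\bar u}\,\omega .
\end{align*}

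For the left identity I would apply \eqref{cond_match_ref} to commute $\kappa$ past $\alpha_u$ first, then use that $\kappa$ is $\lambda$-centralizing for $r_S$ to commute it past $\lambda_{\bar a}$; the right identity is handled symmetrically using $\beta_a\omega=\omega\beta_a$ and the $\lambda$-centralizing property of $\omega$ for $r_T$. This establishes the required commutation and completes the proof.

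I do not anticipate any real obstacle: the first half is a direct citation of \cref{teo_matched}, and the second half is a two-line computation per component, with \eqref{cond_match_ref} providing exactly the bridge needed between the action of $\alpha_u$, $\beta_a$ and the maps $\kappa$, $\omega$. The only mild care point is to note that $\bar a$ and $\bar u$ do not depend on $(b,v)$, so there is no issue pulling $\kappa$ (resp.\ $\omega$) through the outer permutations.
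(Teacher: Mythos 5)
Your proposal is correct and is exactly the argument the paper has in mind: the membership $\kappa\times\omega\in\K(S\bowtie T, r_S\bowtie r_T)$ is the implication $(i)\Rightarrow(ii)$ of \cref{teo_matched}, and the $\lambda$-centralizing property follows componentwise from \eqref{cond_match_ref} together with the $\lambda$-centralizing hypotheses on $\kappa$ and $\omega$, noting that $\bar a=\alpha_u^{-1}(a)$ and $\bar u=\beta_a^{-1}(u)$ depend only on $(a,u)$. The paper records the proof as ``straightforward,'' and your write-up simply supplies the details of that same route.
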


\begin{rem}
    Note that, under the same assumptions of \cref{cor:mathc-inv-lnd}, if we choose $\alpha_u = \lambda_u$, for every  $u\in T$, and $\beta_a = \lambda_a$, for every  $a\in S$, then condition \eqref{cond_match_ref} is automatically satisfied.
\end{rem}

Notice that if $\left(r_{S}, r_{T}, \alpha, \beta\right)$ is a matched product system of solutions that are involutive right non-degenerate and $\kappa$ and $\omega$ are $\rho$-invariant reflections for $r_S$ and $r_T$, respectively, satisfying \eqref{cond_match_ref}, the reflection $\kappa \times \omega$ for the solution $r_S \bowtie r_T$ may not have the same behaviour. It is sufficient to consider the semidirect product of two solutions, as we show in the next example.

\begin{ex}
    Consider the trivial solutions $(S, r_S)$ and $(T, r_T)$ given by $r_S(a,b)=(b, a)$ and $r_T(u,v)=(v,u)$, for all $a,b \in S$ and $u, v \in T$, $\kappa=\id_S$, and let $\omega: T \to T$ be an arbitrary map.  Clearly, $\omega \in \K(T, r_T)$ is $\rho$-invariant.
    Moreover, let $\beta_a=\id_T$, for every $a \in S$, and let $\alpha_u:S \to S$ be maps such that $\alpha_u\alpha_v=\alpha_v\alpha_u$, for all $u,v \in T$. Then, $\left(r_{S}, r_{T}, \alpha, \beta\right)$ is a matched product system of solutions. Besides, by \cref{teo_matched}, $\kappa \times \omega$ is a reflection for $r_S \bowtie r_T$ and 
    \begin{align*}
      \kappa \times \omega\, \ \text{is $\rho$-invariant} \ \iff \ \alpha_u= \alpha_{\omega(u)}.
    \end{align*}
\end{ex}

\medskip

\subsection{The reflection of the strong semilattice of solutions}

Let us start by recalling the construction of strong semilattice of solutions provided in \cite[Theorem 4.1]{CCoSt20x-2}. Let $Y$ be a (lower) semilattice, 
	$\left\{\X{\alpha}\ \left|\ \alpha\in Y \right.\right\}$ a family of disjoint sets indexed by $Y$, and $\left(X_\alpha, r_{\alpha}\right)$ a solution, for every $\alpha\in Y$. 
    For all $\alpha, \beta \in Y$, we simply denote the product $\alpha\wedge\beta$ in $Y$ by the juxtaposition $\alpha\beta$.
	Moreover, for all $\alpha,\beta\in Y$ such that $\alpha\geq \beta$, let $\phii{\alpha}{\beta}:\X{\alpha}\to \X{\beta}$ be a map such that
	\begin{enumerate}
		\item $\phii{\alpha}{\alpha}$ is the identity map of $\X{\alpha}$, for every $\alpha\in Y$,
		\item $\phii{\beta}{\gamma}\phii{\alpha}{\beta} = \phii{\alpha}{\gamma}$, for all $\alpha, \beta, \gamma \in Y$ such that $\alpha \geq \beta \geq \gamma$,
		\item 
		$\left(\phii{\alpha}{\beta}\times \phii{\alpha}{\beta}\right)r_{\alpha}
		= r_{\beta}\left(\phii{\alpha}{\beta}\times \phii{\alpha}{\beta}\right)$, for all $\alpha, \beta\in Y$ such that  $\alpha \geq \beta$.
	\end{enumerate}
	Set $X = \bigcup\left\{\X{\alpha}\ \left|\ \alpha\in Y\right.\right\}$,  then the map $r:X\times X \longrightarrow X\times X$ defined by 
	\begin{align*}
	r\left(x, y\right)= 
	r_{\alpha\beta}\left(\phii{\alpha}{\alpha\beta}\left(x\right),
	\phii{\beta}{\alpha\beta}\left(y\right) \right),
	\end{align*}	
	for all $x\in \X{\alpha}$ and $y\in \X{\beta}$, is a solution on $X$. 
	We call the pair $\left(X, r\right)$ the \emph{strong semilattice $Y$ of solutions $\left(X_\alpha, r_\alpha\right)$}.\\
In general, a strong semilattice of solutions, with $|Y| > 1$, is not bijective and is degenerate, even in the case we involve bijective non-degenerate solutions, cf. \cite[p. 13]{CCoSt20x-2}.

 \smallskip
	
\begin{theor}
   	Let  $\left(X, r\right)$ be a strong semilattice $Y$ of solutions $\left(X_\alpha, r_\alpha\right)$ and assume that $\kappa_{\alpha}$ is a reflection for $\left(X_\alpha, r_{\alpha}\right)$, for every $\alpha \in Y$. Then, the map $\kappa: X \to X$ given by $\kappa(a)=\kappa_{\alpha}(a)$, for every $a \in X_{\alpha}$, is a reflection of the solution $(X,r)$ if and only if 
   	\begin{align}\label{cond_strong}\tag{S}
   	    \phii{\beta}{\alpha\beta}\kappa_{\beta}=\kappa_{\alpha\beta}\phii{\beta}{\alpha\beta}
   	\end{align}
   	is satisfied, for all $\alpha, \beta \in Y$ such that $\alpha \geq \beta$.
   	\begin{proof}
   	Let $a \in X_\alpha$ and $b \in X_\beta$. Then, we get
   	 \begin{align*}
     t(a,\kappa(b))&=\lambda_{\lambda_{\phii{\alpha}{\alpha\beta}\left(a\right)} \phii{\beta}{\alpha\beta}\kappa_{\beta}\left(b\right)}\kappa_{\alpha\beta}\rho_{ \phii{\beta}{\alpha\beta}\kappa_{\beta}\left(b\right)}\phii{\alpha}{\alpha\beta}\left(a\right)\\
&=\lambda_{\lambda_{\phii{\alpha}{\alpha\beta}\left(a\right)} \kappa_{\alpha\beta}\phii{\beta}{\alpha\beta}\left(b\right)}\kappa_{\alpha\beta}\rho_{ \kappa_{\alpha\beta}\phii{\beta}{\alpha\beta}\left(b\right)}\phii{\alpha}{\alpha\beta}\left(a\right) &\mbox{by \eqref{cond_strong}}\\
     &=t\left(\phii{\alpha}{\alpha\beta}\left(a\right), \kappa_{\alpha\beta}\phii{\beta}{\alpha\beta}\left(b\right) \right)\\
     &=t\left((\phii{\alpha}{\alpha\beta}\left(a\right),\phii{\beta}{\alpha\beta}\left(b\right)  \right)&\mbox{$\kappa_{\alpha\beta}$ is a reflection}\\
     &=\lambda_{\lambda_{\phii{\alpha}{\alpha\beta}\left(a\right)}\phii{\beta}{\alpha\beta}\left(b\right)}\kappa_{\alpha\beta}\rho_{\phii{\beta}{\alpha\beta}\left(b\right)}\phii{\alpha}{\alpha\beta}\left(a\right)\\
    &=t(a,b)
 \end{align*}
and
  \begin{align*}
      q(a, \kappa(b))&=\rho_{\kappa_{\alpha\beta}\rho_{\phii{\beta}{\alpha\beta}\kappa_{\beta}\left(b\right)}{\phii{\alpha}{\alpha\beta}\left(a\right)}}\lambda_{\phii{\alpha}{\alpha\beta}\left(a\right)}\phii{\beta}{\alpha\beta}\kappa_{\beta}\left(b\right)\\
      &=\rho_{\kappa_{\alpha\beta}\rho_{\kappa_{\alpha\beta}\phii{\beta}{\alpha\beta}\left(b\right)}{\phii{\alpha}{\alpha\beta}\left(a\right)}}\lambda_{\phii{\alpha}{\alpha\beta}\left(a\right)}\kappa_{\alpha\beta}\phii{\beta}{\alpha\beta}\left(b\right)&\mbox{by \eqref{cond_strong}}\\
      &=q\left(\phii{\alpha}{\alpha\beta}\left(a\right), \kappa_{\alpha\beta} \phii{\beta}{\alpha\beta}\left(b\right)\right)\\
      &=\kappa_{\alpha\beta} \left( q \left(\phii{\alpha}{\alpha\beta}\left(a\right), \phii{\beta}{\alpha\beta}\left(b\right) \right)\right)&\mbox{$\kappa_{\alpha\beta}$ is a reflection}\\
      &=\kappa_{\alpha\beta}\rho_{\kappa_{\alpha\beta}\rho_{\phii{\beta}{\alpha\beta}\left(b\right)}\phii{\alpha}{\alpha\beta}\left(a\right)}\lambda_{\phii{\alpha}{\alpha\beta}\left(a\right)}\phii{\beta}{\alpha\beta}\left(b\right)\\
      &=\kappa(q(a,b)),
  \end{align*}
  hence $\kappa$ is a reflection of the solution $r$.
   	\end{proof}
\end{theor}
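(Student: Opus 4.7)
The plan is to invoke \cref{lemma_rifl_qt} and reduce the verification of conditions \eqref{t} and \eqref{q} to computations inside a single fibre. The pivotal observation is that for $a\in X_\alpha$ and $b\in X_\beta$ the strong semilattice formula gives $r(a,b)=r_{\alpha\beta}\left(\phii{\alpha}{\alpha\beta}(a),\phii{\beta}{\alpha\beta}(b)\right)$, so both $\lambda_a(b)$ and $\rho_b(a)$ lie in $X_{\alpha\beta}$. Consequently, every outer $\lambda$ and $\rho$ appearing in the expansions of $t(a,\kappa(b))$ and $q(a,\kappa(b))$ is computed with respect to $r_{\alpha\beta}$, and the whole verification takes place in $X_{\alpha\beta}$.

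For the forward direction, assuming \eqref{cond_strong}, I would expand $t(a,\kappa(b))$ and, since $\kappa(b)=\kappa_\beta(b)\in X_\beta$, use \eqref{cond_strong} to rewrite $\phii{\beta}{\alpha\beta}\kappa_\beta(b)$ as $\kappa_{\alpha\beta}\phii{\beta}{\alpha\beta}(b)$. The expression then collapses to $t\!\left(\phii{\alpha}{\alpha\beta}(a),\kappa_{\alpha\beta}\phii{\beta}{\alpha\beta}(b)\right)$ computed inside $X_{\alpha\beta}$, so the reflection hypothesis on $\kappa_{\alpha\beta}$ via \eqref{t} at level $\alpha\beta$ finishes this identity and, when unfolded, recovers $t(a,b)$. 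The exact same pattern, with \eqref{q} replacing \eqref{t} at level $\alpha\beta$, delivers $q(a,\kappa(b))=\kappa(q(a,b))$: here one also uses that $\rho_{\kappa_{\alpha\beta}\rho_{\phii{\beta}{\alpha\beta}(b)}\phii{\alpha}{\alpha\beta}(a)}\lambda_{\phii{\alpha}{\alpha\beta}(a)}$ applied to an element of $X_{\alpha\beta}$ stays inside $X_{\alpha\beta}$, so that the final outer $\kappa$ specialises to $\kappa_{\alpha\beta}$.

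For the converse, assuming $\kappa\in\K(X,r)$, I would fix $\alpha,\beta\in Y$, take $a\in X_\alpha$ and $b\in X_\beta$ arbitrary, and unfold condition \eqref{t} for $\kappa$ to obtain the equality $t\!\left(\phii{\alpha}{\alpha\beta}(a),\phii{\beta}{\alpha\beta}\kappa_\beta(b)\right)=t\!\left(\phii{\alpha}{\alpha\beta}(a),\phii{\beta}{\alpha\beta}(b)\right)$ inside $X_{\alpha\beta}$. Since $\kappa_{\alpha\beta}$ is itself a reflection for $r_{\alpha\beta}$, the right-hand side equals $t\!\left(\phii{\alpha}{\alpha\beta}(a),\kappa_{\alpha\beta}\phii{\beta}{\alpha\beta}(b)\right)$, so one is reduced to eliminating the outer operators. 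The main obstacle here is precisely this cancellation: the map $y\mapsto t\!\left(\phii{\alpha}{\alpha\beta}(a),y\right)$ need not be injective in general. I would circumvent this by exploiting \eqref{q} in parallel -- which gives $q\!\left(\phii{\alpha}{\alpha\beta}(a),\phii{\beta}{\alpha\beta}\kappa_\beta(b)\right)=\kappa_{\alpha\beta}\,q\!\left(\phii{\alpha}{\alpha\beta}(a),\phii{\beta}{\alpha\beta}(b)\right)$ -- and combining the two, or specialising $\alpha$ to an element sufficiently above $\beta$ (so that $\alpha\beta=\beta$ and $\phii{\alpha}{\alpha\beta}=\phii{\alpha}{\beta}$ is surjective onto $X_\beta$ in favourable situations) to force the pointwise identity $\phii{\beta}{\alpha\beta}\kappa_\beta(b)=\kappa_{\alpha\beta}\phii{\beta}{\alpha\beta}(b)$, which is \eqref{cond_strong}.
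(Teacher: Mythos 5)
Your verification of the ``if'' direction is correct and is essentially the paper's own argument: expand $t(a,\kappa(b))$ and $q(a,\kappa(b))$ for $a\in X_\alpha$, $b\in X_\beta$, note that every outer $\lambda$, $\rho$ and the middle $\kappa$ act at level $\alpha\beta$, use \eqref{cond_strong} to replace $\phii{\beta}{\alpha\beta}\kappa_\beta$ by $\kappa_{\alpha\beta}\phii{\beta}{\alpha\beta}$, and then invoke \eqref{t} and \eqref{q} for the reflection $\kappa_{\alpha\beta}$ of $r_{\alpha\beta}$. Nothing to add there.

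The converse is where your argument has a genuine gap, and neither of your two escape routes closes it. Specialising to $\alpha\geq\beta$ forces $\alpha\beta=\beta$, hence $\phii{\beta}{\alpha\beta}=\id_{X_\beta}$ and $\kappa_{\alpha\beta}=\kappa_\beta$, so the instance of \eqref{cond_strong} you obtain is the tautology $\kappa_\beta=\kappa_\beta$; it carries no information about the pairs with $\alpha\beta<\beta$, which are the only non-trivial ones. Combining \eqref{t} with \eqref{q} does not help either: setting $u:=\phii{\beta}{\alpha\beta}\kappa_\beta(b)$ and $v:=\kappa_{\alpha\beta}\phii{\beta}{\alpha\beta}(b)$, all that the reflection property of $\kappa$ together with that of $\kappa_{\alpha\beta}$ yields is that $u$ and $v$ have the same image under the maps $t\bigl(\phii{\alpha}{\alpha\beta}(a),\cdot\bigr)$ and $q\bigl(\phii{\alpha}{\alpha\beta}(a),\cdot\bigr)$ attached to $r_{\alpha\beta}$ and $\kappa_{\alpha\beta}$, and these maps need not be jointly injective. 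In fact the implication you are trying to prove fails in general: take $Y=\{0,1\}$ with $1>0$, let $X_1=\{p,q\}$ and $X_0=\{c,d\}$ both carry the identity solution, let $\phii{1}{0}$ be the bijection $p\mapsto c$, $q\mapsto d$, and take $\kappa_1=\id$ and $\kappa_0$ the constant map with value $c$. Each $\kappa_\alpha$ is a reflection by \cref{exs_riflessioni}, and a direct check of \eqref{t} and \eqref{q} on the four types of pairs shows that the glued $\kappa$ is a reflection of $(X,r)$ (on mixed pairs $t$ does not depend on its second argument and $q$ is constantly $c$), yet $\phii{1}{0}\kappa_1(q)=d\neq c=\kappa_0\phii{1}{0}(q)$, so \eqref{cond_strong} fails. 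You should therefore regard the sufficiency of \eqref{cond_strong} as the actual content here; note that the paper's displayed proof likewise only establishes the ``if'' implication, and the necessity would require extra hypotheses (for instance injectivity of $y\mapsto q\bigl(\phii{\alpha}{\alpha\beta}(a),y\bigr)$) that neither you nor the paper supplies.
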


\smallskip

\begin{cor}
      	Let  $\left(X, r\right)$ be a strong semilattice $Y$ of solutions $\left(X_\alpha, r_\alpha\right)$ and assume that  $\kappa_{\alpha} \in \K\left(X_\alpha, r_{\alpha}\right)$, for every $\alpha \in Y$, such that $\eqref{cond_strong}$ is satisfied. Consider the reflection $\kappa: X \to X$ given by $\kappa(a)=\kappa_{\alpha}(a)$, for every $a \in X_{\alpha}$. Then, the following hold:
      \begin{enumerate}
          \item if $\kappa_{\alpha}$ is $\lambda^{[\alpha]}$-centralizing for the solution $\left(X_\alpha, r_{\alpha}\right)$, for every $\alpha \in Y$, then $\kappa$ is $\lambda$-centralizing for $(X,r)$;
        \item if the map  $\kappa_{\alpha}$ is $\rho^{[\alpha]}$-invariant, for $\alpha \in Y$, then $\kappa$ is $\rho$-invariant.
      \end{enumerate} 
      \begin{proof}
          It is straightforward.
      \end{proof}
\end{cor}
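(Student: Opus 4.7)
The plan is to verify each condition by a direct unwrapping of the formula defining $r$ on the strong semilattice. For $x\in X_\alpha$ and $y\in X_\beta$, recall that
\[
\lambda_x(y)=\lambda^{[\alpha\beta]}_{\phii{\alpha}{\alpha\beta}(x)}\phii{\beta}{\alpha\beta}(y),\qquad \rho_y(x)=\rho^{[\alpha\beta]}_{\phii{\beta}{\alpha\beta}(y)}\phii{\alpha}{\alpha\beta}(x),
\]
and that $\kappa_\gamma$ maps $X_\gamma$ into $X_\gamma$ for every $\gamma\in Y$; in particular $\kappa(y)=\kappa_\beta(y)\in X_\beta$ and the same meet $\alpha\beta$ governs $\lambda_x\kappa(y)$ and $\lambda_x(y)$.

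For statement $1.$, I would compute $\lambda_x\kappa(y)=\lambda^{[\alpha\beta]}_{\phii{\alpha}{\alpha\beta}(x)}\phii{\beta}{\alpha\beta}\kappa_\beta(y)$, then apply \eqref{cond_strong} to rewrite $\phii{\beta}{\alpha\beta}\kappa_\beta=\kappa_{\alpha\beta}\phii{\beta}{\alpha\beta}$, then use the $\lambda^{[\alpha\beta]}$-centralizing hypothesis on $\kappa_{\alpha\beta}$ to pull it to the outside of $\lambda^{[\alpha\beta]}_{\phii{\alpha}{\alpha\beta}(x)}$, obtaining $\kappa_{\alpha\beta}\lambda^{[\alpha\beta]}_{\phii{\alpha}{\alpha\beta}(x)}\phii{\beta}{\alpha\beta}(y)$. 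Since $\lambda^{[\alpha\beta]}_{\phii{\alpha}{\alpha\beta}(x)}\phii{\beta}{\alpha\beta}(y)=\lambda_x(y)\in X_{\alpha\beta}$, the outer $\kappa_{\alpha\beta}$ is precisely $\kappa$ restricted to $X_{\alpha\beta}$, so we conclude $\lambda_x\kappa(y)=\kappa\lambda_x(y)$.

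Statement $2.$ is even more direct: I would compute
\[
\rho_{\kappa(y)}(x)=\rho^{[\alpha\beta]}_{\phii{\beta}{\alpha\beta}\kappa_\beta(y)}\phii{\alpha}{\alpha\beta}(x)=\rho^{[\alpha\beta]}_{\kappa_{\alpha\beta}\phii{\beta}{\alpha\beta}(y)}\phii{\alpha}{\alpha\beta}(x),
\]
where the second equality uses \eqref{cond_strong}. The $\rho^{[\alpha\beta]}$-invariance of $\kappa_{\alpha\beta}$ then removes $\kappa_{\alpha\beta}$ from the subscript, yielding $\rho^{[\alpha\beta]}_{\phii{\beta}{\alpha\beta}(y)}\phii{\alpha}{\alpha\beta}(x)=\rho_y(x)$, which is exactly $\rho$-invariance of $\kappa$ at $y$.

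No step poses a genuine obstacle; the main care is bookkeeping: noting that $\kappa_\beta$ preserves the block $X_\beta$, that $\lambda_x(y)$ and $\lambda_x\kappa(y)$ share the same index $\alpha\beta$, and that \eqref{cond_strong} is exactly the compatibility needed to move $\phii{\beta}{\alpha\beta}$ across the block-wise reflections. Both assertions are then one-line consequences of the corresponding fibrewise properties and \eqref{cond_strong}, which justifies the author's ``straightforward'' remark in the proof.
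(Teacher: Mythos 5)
Your proposal is correct and is exactly the direct fibrewise computation the authors have in mind when they write ``It is straightforward'': unwrap $\lambda_x$ and $\rho_y$ via the defining formula $r(x,y)=r_{\alpha\beta}(\phii{\alpha}{\alpha\beta}(x),\phii{\beta}{\alpha\beta}(y))$, move $\kappa_\beta$ across $\phii{\beta}{\alpha\beta}$ using \eqref{cond_strong}, and invoke the corresponding property of $\kappa_{\alpha\beta}$ on the block $X_{\alpha\beta}$. The bookkeeping points you flag (that $\kappa_\beta$ preserves $X_\beta$, so $y$ and $\kappa(y)$ index the same meet $\alpha\beta$) are precisely what makes the argument go through, so nothing is missing.
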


\medskip

\subsection{The reflection of the twisted unions}

In \cite[Section 3.4]{ESS99}, Etingof, Schedler, and Soloviev provided a technique to construct involutive non-degenerate solutions in the finite case, called the \emph{twisted union}. A computer program generated all solutions, up to isomorphism, on sets of cardinality $|X| \leq 8$, and showed that almost all are twisted union of involutive non-degenerate solutions on smaller sets. We extend such a construction to arbitrary solutions.

\begin{prop}\label{prop:gen-twisted-union}
Let $(X, u)$ and $(Y, v)$ be two solutions. Set $Z = X \cupdot Y$ and consider the map $r:Z\times Z \to Z \times Z$ defined by
$$
    r\left(a,\,b\right)
    =\begin{cases}
    \ u\left(a,\,b\right) &\ \text{if}\quad a,\, b \in X\\
    \ v\left(a,\,b\right) &\ \text{if}\quad a,\, b \in Y\\
    \ \left(g\left(b\right),\,f\left(a\right)\right) &\  \text{if} \quad a \in X, b \in Y\\
    \ \left(\alpha\left(b\right),\,\beta\left(a\right)\right) &\ \text{if} \quad a \in Y, b \in X
\end{cases}$$
for all $a, b \in Z$, with $f,\alpha: X \to X$ and $g,\beta: Y \to Y$ maps. Then, $(Z,\,r)$ is a solution if and only if for all $x \in X$ and $y \in Y$ the following hold:
	{
		\begin{center}
			\begin{minipage}[b]{.5\textwidth}
				\vspace{-\baselineskip}
				\begin{align*}
					\alpha f = f\alpha
				\end{align*}
			\end{minipage}%
			\hfill\hfill\hfill
			\begin{minipage}[b]{.5\textwidth}
				\vspace{-\baselineskip}
				\begin{align*}g\beta=\beta g
				\end{align*}
			\end{minipage}
			\begin{minipage}[b]{.5\textwidth}
				\vspace{-\baselineskip}
				\begin{align*}
\left(f\times f\right)u = u\left(f\times f\right)
				\end{align*}
			\end{minipage}%
			\hfill\hfill
			\begin{minipage}[b]{.5\textwidth}
				\vspace{-\baselineskip}
				\begin{align*}\left(g\times g\right)v= v\left(g\times g\right)
				\end{align*}
			\end{minipage}
			\begin{minipage}[b]{.5\textwidth}
				\vspace{-\baselineskip}
				\begin{align*}\left(\alpha\times \alpha\right)u = u\left(\alpha\times \alpha\right)
				\end{align*}
			\end{minipage}%
			\hfill\hfill
			\begin{minipage}[b]{.5\textwidth}
				\vspace{-\baselineskip}
				\begin{align*}\left(\beta\times \beta\right)v = v\left(\beta\times \beta\right)
				\end{align*}
			\end{minipage}
			\begin{minipage}[b]{.5\textwidth}
				\vspace{-\baselineskip}
				\begin{align*}\alpha\lambda_{f\left(x\right)} = \lambda_x\alpha
				\end{align*}
			\end{minipage}%
			\hfill\hfill
			\begin{minipage}[b]{.5\textwidth}
				\vspace{-\baselineskip}
				\begin{align*}  \beta\rho_{g\left(y\right)} = \rho_y \beta
				\end{align*}
			\end{minipage}
			\begin{minipage}[b]{.5\textwidth}
				\vspace{-\baselineskip}
				\begin{align*}  f\rho_{\alpha(x)}= \rho_x f
				\end{align*}
			\end{minipage}%
			\hfill\hfill
			\begin{minipage}[b]{.5\textwidth}
				\vspace{-\baselineskip}
				\begin{align*}
                g\lambda_{\beta\left(y\right)} = \lambda_{y}g
				\end{align*}
			\end{minipage}
		\end{center}
	}
\noindent The pair $(Z,r)$ is called the \emph{twisted union of $(X, u)$ and $(Y, v)$ via $f,\alpha$ and $g,\beta$}.
\end{prop}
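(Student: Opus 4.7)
The strategy is a case analysis on the membership in $X$ or $Y$ of each of the three coordinates of a generic triple $(a,b,c)\in Z^3$. Since $r$ is a four-branch piecewise definition on $Z^2$, fixing the membership pattern of $(a,b,c)$ determines, at each of the three intermediate stages on either side of the Yang--Baxter equation $(r\times\id)(\id\times r)(r\times\id)=(\id\times r)(r\times\id)(\id\times r)$, which branch of $r$ is applied. Each of the eight cases therefore unfolds into a single symbolic identity between expressions built from $u,v,f,g,\alpha,\beta$ and the coordinate maps $\lambda,\rho$ of $u$ and $v$; the "if and only if" will follow because these are free equalities involving independent components.

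The two homogeneous cases $(X,X,X)$ and $(Y,Y,Y)$ invoke only $u$ (respectively $v$) at every stage, so the Yang--Baxter equation for $r$ collapses to the Yang--Baxter equation for $u$ (respectively $v$), which holds by assumption and imposes no new conditions.

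For the six mixed cases I would carefully track the type of each coordinate after every application of $r$: the off-diagonal branches $(a,b)\mapsto(g(b),f(a))$ (for $a\in X, b\in Y$) and $(a,b)\mapsto(\alpha(b),\beta(a))$ (for $a\in Y, b\in X$) swap the types, while $u$ and $v$ preserve them, so the branch applied at each stage is unambiguous. Unwinding the two sides of the Yang--Baxter equation in each case then produces precisely: $(f\times f)u=u(f\times f)$ from $(X,X,Y)$; $(\alpha\times\alpha)u=u(\alpha\times\alpha)$ from $(Y,X,X)$; $(g\times g)v=v(g\times g)$ from $(X,Y,Y)$; $(\beta\times\beta)v=v(\beta\times\beta)$ from $(Y,Y,X)$; the three relations $\alpha\lambda_{f(x)}=\lambda_x\alpha$, $g\beta=\beta g$, $f\rho_{\alpha(x)}=\rho_x f$ from $(X,Y,X)$; and the three relations $g\lambda_{\beta(y)}=\lambda_y g$, $\alpha f=f\alpha$, $\beta\rho_{g(y)}=\rho_y\beta$ from $(Y,X,Y)$. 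These together are exactly the ten listed conditions.

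The only real obstacle is bookkeeping: one must carefully label, at each of the six intermediate stages on each side, which of $X,Y$ each coordinate lies in, and then correctly identify the branch of $r$ applied. I expect no collapse of conditions across cases, since in the two "triple" cases $(X,Y,X)$ and $(Y,X,Y)$ the three component identities are logically independent and involve distinct combinations of the auxiliary maps, and the four "symmetric" cases each produce a single commutation relation not available from the others. Hence each of the ten conditions is both necessary and, jointly, sufficient.
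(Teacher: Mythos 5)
Your proposal is correct and matches the paper's proof in substance: both arguments are direct case-by-case verifications of the Yang--Baxter equation on the piecewise-defined map, and your attribution of each of the ten conditions to a specific membership pattern of the triple (the two homogeneous patterns giving nothing, the four ``symmetric'' mixed patterns each giving one intertwining condition, and the two alternating patterns each giving three relations) is exactly right. The only cosmetic difference is that the paper organizes the same computation by the three component equations (Y1)--(Y3) rather than by membership pattern, so each displayed condition of the form $(f\times f)u=u(f\times f)$ appears there split into its $\lambda$- and $\rho$-halves.
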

\begin{proof}
To get the claim it is enough to verify that 
\begin{align*}
    \eqref{first}&\Leftrightarrow 
         \alpha\lambda_{x} = \lambda_{\alpha\left(x\right)}\alpha, \ \  
         g\lambda_y = \lambda_{g\left(y\right)}g, \ \
         \alpha\lambda_{f\left(x\right)} = \lambda_{x}\alpha, 
         \ \text{and }\  g\lambda_{\beta\left(y\right)} = \lambda_{y}g, \\
    \eqref{second}&\Leftrightarrow
        f\alpha = \alpha f,\ \  
        g\beta = \beta g, \ \ 
        f\lambda_{x} = \lambda_{f\left(x\right)}f, \ \
        \beta\lambda_{y} = \lambda_{\beta\left(y\right)}\beta, \ \\
        &\,\quad \alpha\rho_{x} = \rho_{\alpha\left(x\right)}\alpha, \  \ \text{and }\ 
        g\rho_y = \rho_{g\left(y\right)}g,\\
    \eqref{third}&\Leftrightarrow
        f\rho_x = \rho_{f\left(x\right)}f, \ \
        \beta\rho_{y} = \rho_{\beta\left(y\right)}\beta, \ \
        f\rho_{\alpha\left(x\right)} = \rho_xf, \   \text{and }\ 
        \beta\rho_{g\left(y\right)} = \rho_y\beta,    
\end{align*}
for all $x\in X$ and $y\in Y$, respectively. 
\end{proof}

\smallskip

\begin{ex}\label{ex_lyu}
Let $\left(X, u\right)$ and $\left(Y, v\right)$ be the solutions given by $u\left(x, y\right) = \left(f\left(y\right), \alpha\left(x\right)\right)$ 
and $v\left(x, y\right) = \left(g\left(y\right), \beta\left(x\right)\right)$ with $f,\alpha:X\to X$ maps such that $f\alpha = \alpha f$ and $g,\beta:Y\to Y$ maps such that $g\beta = \beta g$. Then, the twisted union $(Z, r)$ of $\left(X, u\right)$ and $\left(Y, v\right)$ via $f, \alpha$ and $g, \beta$ is solution that does not belong to the class of Lyubashenko's solution.

\end{ex}

\begin{cor}\label{cor:gen-twisted-union}
    Let $(X,u)$ and $(Y,v)$ be bijective non-degenerate solutions, $f,\alpha \in \Sym_X$ and $g,\beta \in \Sym_Y$.
    Then, the twisted union $(Z, r)$ of $(X,u)$ and $(Y,v)$ via $f,\alpha$ and $g,\beta$ is a bijective non-degenerate solution. 
\end{cor}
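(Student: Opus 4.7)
The plan is to exploit the piecewise definition of $r$ together with the fact that $Z\times Z$ decomposes as the disjoint union of the four rectangles $X\times X$, $X\times Y$, $Y\times X$, $Y\times Y$, and the fact that $r$ preserves this decomposition up to swapping the two cross rectangles. Because $(X,u)$ and $(Y,v)$ are already bijective non-degenerate solutions and by hypothesis on $f,\alpha,g,\beta$, each of the nine identities in \cref{prop:gen-twisted-union} can be checked (or is assumed to hold, since we are calling $(Z,r)$ a twisted union), so $(Z,r)$ is a solution. The content of the corollary is therefore the verification of bijectivity and non-degeneracy.

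For bijectivity, I would note that $r$ restricts to $u$ on $X\times X$ and to $v$ on $Y\times Y$, both bijections by assumption. On the cross pieces we have the two maps $X\times Y\to Y\times X$, $(a,b)\mapsto(g(b),f(a))$, and $Y\times X\to X\times Y$, $(a,b)\mapsto(\alpha(b),\beta(a))$, each of which is a bijection precisely because $f,\alpha\in\Sym_X$ and $g,\beta\in\Sym_Y$. Since these four restrictions cover $Z\times Z$ disjointly, $r$ is globally bijective.

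For non-degeneracy, I would split into the cases $a\in X$ and $a\in Y$ and, in each case, compute how $\lambda_a$ and $\rho_a$ act on $X$ and on $Y$ separately, using the definition of $r$. For $a\in X$, inspection gives that $\lambda_a$ restricts to $\lambda^{u}_{a}$ on $X$ and to $g$ on $Y$, while $\rho_a$ restricts to $\rho^{u}_{a}$ on $X$ and to $\beta$ on $Y$. For $a\in Y$, $\lambda_a$ restricts to $\alpha$ on $X$ and to $\lambda^{v}_{a}$ on $Y$, while $\rho_a$ restricts to $f$ on $X$ and to $\rho^{v}_{a}$ on $Y$. In each case both restrictions are bijections by the non-degeneracy of $u$ and $v$ together with the assumption that $f,\alpha,g,\beta$ are permutations, so $\lambda_a$ and $\rho_a$ are bijections of $Z$ for every $a\in Z$.

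The only real hazard is bookkeeping: the image of $X\times Y$ under $r$ lies in $Y\times X$, not in $X\times Y$, so when reading off $\lambda_a(b)=\pr_1 r(a,b)$ and $\rho_b(a)=\pr_2 r(a,b)$ one must be attentive to which factor of $Z$ each coordinate belongs to. Once this is set up correctly the verification is routine and the corollary follows.
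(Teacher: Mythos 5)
Your proposal is correct and follows exactly the argument the paper leaves implicit (the corollary is stated without proof): reading the corollary as assuming the compatibility conditions of \cref{prop:gen-twisted-union}, the only content is bijectivity and non-degeneracy, which you verify by the block decomposition of $Z\times Z$ and by identifying the restrictions of $\lambda_a$ and $\rho_a$ to $X$ and $Y$ — all of which you read off correctly from the piecewise definition of $r$. The only cosmetic slip is the count of the compatibility identities (there are ten, not nine), which does not affect the argument.
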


\begin{rem}
    It is clear that \cref{cor:gen-twisted-union} includes the construction of twisted union provided in \cite{ESS99} if we consider two involutive non-degenerate solutions and aim to construct solutions of the same type. 
    Indeed, this is the case with $\alpha = f^{-1}$ and $\beta = g^{-1}$.
\end{rem}
    
More generally, by choosing suitable maps $f, \alpha, g$, and $\beta$, two involutive non-degenerate solutions can give rise to a bijective non-degenerate solution not necessarily involutive, as we show in the next example.
    \begin{ex}
        Let $(X, u)$ and $(Y, v)$ be the twist maps on $X$ and $Y$, respectively. As observed in \cref{ex:fg}, all maps $\kappa:X\to X$ and $\omega:Y\to T$ are reflections for $(X, u)$ and $(Y, v)$, respectively. Hence, by choosing $f = \alpha = \kappa$ and  $g = \beta = \omega$, conditions in \cref{prop:gen-twisted-union} are satisfied. Furthermore, if $\kappa$ or $\omega$ are not involutive, then $r$ is not involutive.
    \end{ex}
\medskip

%

In the following, we show that, in the construction of twisted union, one can choose as maps $f$, $\alpha$, $g$, and $\beta$, the reflections of the starting solutions if these are of derived type.
\begin{ex}\label{ex_twi}
    Let $\left(X, u\right)$ be the solution associated to a left rack $\left(X, \triangleright\right)$ and $(Y, v)$ the solution associated with a right rack $\left(Y, \triangleleft\right)$. Moreover, consider 
    $\kappa\in C_{\End(X,\triangleright)}\left(\LMlt\left(X,\triangleright\right)\right)$ (hence $\kappa$  is a reflection for $(X, u)$ by \cref{centraliz}) and $\omega=R_y$ a reflection for $(Y, v)$, with $y \in Y$.  If we set $f = \alpha = \kappa$, $g = \beta = \omega$, and $Z = X \cupdot Y$ then, 
    by \cref{moltipld_rifl_shelf} and
    \cref{prop:gen-twisted-union}, 
    the map $r:Z\times Z \to Z \times Z$ defined by
\begin{align*}
    r\left(a, b\right)
    =\begin{cases}
    \ \left(b, b\triangleright a\right)&\ \text{if}\quad a,\,b \in X\\
    \ \left(b\triangleleft a, a\right) &\ \text{if}\quad a,\, b\in Y\\
    \ \left(\omega\left(b\right),\,\kappa\left(a\right)\right) &\  \text{if} \quad a \in X, b \in Y\\
    \ \left(\kappa\left(b\right),\,\omega\left(a\right)\right) &\ \text{if} \quad a \in Y, b \in X
\end{cases}
\end{align*}
is a non-degenerate bijective solution.
\end{ex}

\smallskip

\begin{theor}\label{teo_twi}
    Let $(Z, r)$ be the twisted union of two solutions $(X, u)$ and $(Y, v)$, consider $\kappa \in \K(X, u)$ and $\kappa' \in \K(Y, v)$. Then, the map $\omega : Z \to Z$ defined by
    \begin{equation}\label{omega}
        \omega(a)=\begin{cases}
  \kappa(a) &\text{if}\quad a\in X\\
  \kappa'(a) &\text{if}\quad a \in Y
\end{cases}
    \end{equation}
is a reflection for $(Z,\,r)$ if and only if $\kappa (f\alpha)=(f \alpha) \kappa$ and $\kappa' (g\beta)=(g \beta) \kappa'$.
\begin{proof}
    It is straightforward.
\end{proof}
\end{theor}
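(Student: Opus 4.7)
My plan is to apply \cref{lemma_rifl_qt} directly and verify conditions \eqref{t} and \eqref{q} for $\omega$ by splitting into four cases according to which of $X$ or $Y$ the elements $a,b\in Z$ belong. Writing $t,q$ for the auxiliary maps of $(Z,r)$, the entire proof consists of expanding $t(a,b)$, $t(a,\omega(b))$, $q(a,b)$, $\omega q(a,b)$, and $q(a,\omega(b))$ using the piecewise definition of $r$, and tracking in which summand of $Z=X\cupdot Y$ each intermediate value lives, since this determines which branch of $r$ to apply at the next step.

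In the cases $a,b\in X$ and $a,b\in Y$, the images $\lambda_a(b),\rho_b(a)$ stay in the same summand, $\omega$ acts there as $\kappa$ or $\kappa'$, and the required identities reduce verbatim to the reflection conditions for $\kappa\in\K(X,u)$ and $\kappa'\in\K(Y,v)$, respectively. The two mixed cases carry the nontrivial content. For $a\in X$, $b\in Y$ one computes $\lambda_a(b)=g(b)\in Y$ and $\rho_b(a)=f(a)\in X$, so that $\omega\rho_b(a)=\kappa f(a)\in X$; consequently $t(a,b)=\lambda_{g(b)}(\kappa f(a))=\alpha\kappa f(a)$, and the same computation with $b$ replaced by $\kappa'(b)$ gives $t(a,\omega(b))=\alpha\kappa f(a)$ as well, so \eqref{t} is automatic. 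For \eqref{q} one finds $q(a,b)=\rho_{\kappa f(a)}g(b)=\beta g(b)\in Y$ and $q(a,\omega(b))=\beta g\kappa'(b)$; thus \eqref{q} in this case amounts to $\kappa'\beta g=\beta g\kappa'$, which by the twisted-union relation $g\beta=\beta g$ is exactly $\kappa'(g\beta)=(g\beta)\kappa'$.

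The case $a\in Y$, $b\in X$ is treated analogously and, by the symmetric computation (now $\lambda_a(b)=\alpha(b)\in X$, $\rho_b(a)=\beta(a)\in Y$), yields that \eqref{t} is automatic while \eqref{q} is equivalent to $\kappa f\alpha=f\alpha\kappa$, i.e.\ $\kappa(f\alpha)=(f\alpha)\kappa$. Combining the four cases gives both directions of the equivalence: the two commutation relations are exactly the obstructions produced by the mixed cases, while the pure cases always work.

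The only real subtlety, and the most error-prone step, is the bookkeeping of which summand each expression lies in at each stage of the computation, because this selects the correct branch of $r$; the rest is a routine application of the identities listed in \cref{prop:gen-twisted-union} (in particular $f\alpha=\alpha f$ and $g\beta=\beta g$) together with the reflection equations for $\kappa$ and $\kappa'$ on the ``pure'' blocks.
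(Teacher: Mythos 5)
Your proposal is correct and fills in exactly the routine case-by-case verification that the paper dismisses as ``straightforward'': the summand bookkeeping is right in all four cases, the pure cases reduce to the reflection conditions for $\kappa$ and $\kappa'$, condition \eqref{t} is automatic in the mixed cases, and \eqref{q} in the mixed cases yields precisely the two commutation relations $\kappa(f\alpha)=(f\alpha)\kappa$ and $\kappa'(g\beta)=(g\beta)\kappa'$. No gap; this is the intended argument.
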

\smallskip

Below, we provide some classes of examples obtained by \cref{teo_twi}.


\begin{ex}
    Let $\left(X, u\right)$ and $\left(Y, v\right)$ be equivalent solutions via a bijection $\varphi:X\to Y$. Let us denote by $\left(Z, r\right)$ the solution obtained through \cref{prop:gen-twisted-union} by considering $\alpha = f$ and $\beta = g$ maps such that $\left(X, u\right)$ and $\left(Y, v\right)$ are equivalent via $\alpha$ and $\beta$ with themselves, respectively.
    Furthermore, let $\kappa\in \mathcal{K}\left(X,u\right)$ and consider $\kappa' = \kappa_\varphi(=\varphi\kappa\varphi^{-1})$ that, by \cref{iso:refl}-$1.$, is a reflection for $\left(Y, v\right)$.
    Then, applying \cref{teo_twi}, we deduce that the map $\omega:Z\to Z$ defined as in \eqref{omega} is a reflection for $(Z,\,r)$.
\end{ex}

\begin{ex}
  Let us consider the twisted union $(Z, r)$ of $\left(X, u\right)$ and $\left(Y, v\right)$ in \cref{ex_lyu}. According to \cref{ex:fg}, any maps $\kappa: X \to X$ and $\kappa': Y \to Y$ such that $\kappa(f\alpha)=(f\alpha)k$ and $\kappa'(g\beta)=(g\beta)\kappa'$ are reflections for $(X, u)$ and $(Y, v)$, respectively. Thus, by \cref{teo_twi}, the map $\omega: Z \to Z$ defined as in \eqref{omega} is a reflection for $(Z, r).$ 
\end{ex}

\begin{ex}
 Let $(Z, r)$ be the twisted union of $(X, u)$ and $(Y, v)$ in \cref{ex_twi}, with $f=\alpha=\kappa \in \mathcal{K}(X, u)$ and $g=\beta=\kappa'\in \mathcal{K}(Y,v)$. Then by \cref{teo_twi} the map $\omega : Z \to Z$ defined as in \eqref{omega} is easily a reflection for $(Z,\,r)$.
\end{ex}

\newpage

\setcounter{section}{0}
\renewcommand{\thesection}{\Alph{section}}
\section{Reflections for solutions associated with skew braces}\label{sec:appendix}

We gather numerical data computing all reflections for some bijective non-degenerate solutions.
To this aim, we focus on the solutions associated with the well-studied algebraic structure of \emph{skew braces}. For more details on these structures, we refer the reader to \cite{GuVe17}. Reflections maps coming from skew braces merit further investigation which will be undertaken in future works.
In particular, we implement the \texttt{GAP} software and devise an elementary algorithmic approach to compute all reflections for the solutions associated to the first skew braces in the database furnished by the \texttt{YangBaxter} package \cite{YangBaxter}. 

The algorithm proceeds as explained below:

\begin{enumerate}
    \item[Input:] A positive integer $n$, a list of positive integers $T$;
    \item For each $m \in T$, consider the skew brace $B$ of order $n$ and type $m$ in the database, compute the associated solution and determine all its reflections with \cref{lemma_rifl_qt};
    \item Determine which reflections are $\lambda$-centralizing or $\rho$-invariant;
    \item[Output:] A list of the reflections for each type labelled as $\lambda$-centralizing or $\rho$-invariant.
\end{enumerate}

\medskip

The following table records the number of \emph{all} reflections for solutions associated with skew braces until the order $8$. 
Specifically, for each case, we enumerate the reflections which are \textbf{only} $\lambda$-centralizing, \textbf{only} $\rho$-invariant or \textbf{both}.
Lastly, for a better analysis, we also highlight the trivial and almost-trivial skew braces writing the type of the former in boldface and underlining the type of the latter.

{\small
\begin{table}[!htp]
\begin{center}
\begin{tabular}{|c|c|c|c|c|c|c|c|}
  \hline
    $\#$ & Type & $(B,+)$ & $(B,\circ)$ & Tot. & $\lambda$-centr. & $\rho$-inv. & Both  \\ \hline
    2 & \underline{\textbf{1}} & $C_2$ & $C_2$ & 4 & 0 & 0 & 4  \\\hline
    3 & \underline{\textbf{1}} & $C_3$ & $C_3$ & 27 & 0 & 0 & 27  \\ \hline
    \multirow{4}{1pt}{4} & \underline{\textbf{1}} & $C_4$ & $C_4$ & 256 & 0 & 0 & 256 \\ 
         & 2 & $C_4$ & $C_2\times C_2$ & 24 & 8 & 8 & 8  \\ 
         & \underline{\textbf{3}} & $C_2\times C_2$ & $C_2\times C_2$ & 256 & 0 & 0 & 256  \\ 
         &4 & $C_2\times C_2$ & $C_4$ & 24 & 8 & 8 & 8  \\\hline
    5 & \underline{\textbf{1}} & $C_5$ & $C_5$ & 3125 & 0 & 0 & 3125  \\ \hline
    \multirow{6}{1pt}{6} & \textbf{1} & $S_3$ & $S_3$ & 49 & 48 & 0 & 1 \\
         & \underline{2} & $S_3$ & $S_3$ & 2 & 0 & 0 & 2  \\
         & 3 & $S_3$ & $C_6$ & 11 & 8 & 0 & 1  \\
         & 4 & $S_3$ & $C_6$ & 28 & 27 & 0 & 1  \\
         & 5 & $C_6$ & $S_3$ & 864 & 135 & 720 & 9 \\
         & \underline{\textbf{6}} & $C_6$ & $C_6$ & 46656 & 0 & 0 & 46656 \\ \hline
     7 & \underline{\textbf{1}} & $C_7$ & $C_7$ & 823543 & 0 & 0 & 823543  \\ \hline
     \end{tabular}
     \caption{$\lambda$-centr. and $\rho$-inv. reflections for solutions associated with skew braces until order $7$.}
\end{center}
\end{table}
}

\small{
\begin{table}[!htp]
    \begin{center}
    \begin{tabular}{|c|c|c|c|c|c|c|}
    \hline
    Type & $(B,+)$ & $(B,\circ)$ & Tot. & $\lambda$-centr. & $\rho$-inv. & Both \\ \hline
        1 & $C_8$ & $C_8$ & 77824 & 12288 & 61440 & 4096 \\ 
        2 & $C_8$ & $D_8$ & 67328 & 1792 & 65280 & 256 \\ 
        3 & $C_8$ & $Q_8$ & 67328 & 1792 & 65280 & 256 \\ 
        \underline{\textbf{4}} & $C_8$ & $C_8$ & 16777216 & 0 & 0 & 16777216 \\ 
        5 & $C_8$ & $C_4\times C_2$ & 4352 & 112 & 240 & 16 \\ 
        6 & $C_4\times C_2$ & $C_2\times C_2 \times C_2$ & 77824 & 12288 & 61440 & 4096 \\
        7 & $C_4\times C_2$ & $C_4\times C_2$ & 12480 & 224 & 224 & 32 \\ 
        8 & $C_4\times C_2$ & $C_2\times C_2 \times C_2$ & 4800 & 224 & 224 & 32 \\ 
        \underline{\textbf{9}} & $C_4\times C_2$ & $C_4\times C_2$ & 16777216 & 0 & 0 & 16777216 \\ 
        10 & $C_4\times C_2$ & $C_4\times C_2$ & 77824 & 12288 & 61440 & 4096 \\ 
        11 & $C_4\times C_2$ & $C_4\times C_2$ & 4352 & 112 & 240 & 16 \\ 
        12 & $C_4\times C_2$ & $D_8$ & 81664 & 16128 & 65280 & 256 \\ 
        13 & $C_4\times C_2$ & $D_8$ & 81664 & 16128 & 65280 & 256 \\ 
        14 & $C_4\times C_2$ & $C_4\times C_2$ & 77824 & 12288 & 61440 & 4096 \\ 
        15 & $C_4\times C_2$ & $Q_8$ & 81664 & 16128 & 65280 & 256 \\ 
        16 & $C_4\times C_2$ & $C_4\times C_2$ & 4800 & 224 & 224 & 32 \\ 
        17 & $C_4\times C_2$ & $D_8$ & 960 & 224 & 224 & 32 \\ 
        18 & $C_4\times C_2$ & $D_8$ & 1152 & 63 & 0 & 1 \\ 
        19 & $C_4\times C_2$ & $D_8$ & 5312 & 2016 & 224 & 32 \\ 
        20 & $D_8$ & $Q_8$ & 1184 & 1024 & 0 & 32 \\ 
        \textbf{21} & $D_8$ & $D_8$ & 3040 & 3008 & 0 & 32 \\ 
        \underline{22} & $D_8$ & $D_8$ & 64 & 0 & 0 & 64 \\ 
        23 & $D_8$ & $Q_8$ & 192 & 32 & 0 & 32 \\ 
        24 & $D_8$ & $C_8$ & 288 & 256 & 16 & 16 \\ 
        25 & $D_8$ & $C_4\times C_2$ & 1120 & 1024 & 0 & 32 \\ 
        26 & $D_8$ & $C_4\times C_2$ & 1120 & 1024 & 0 & 32 \\ 
        27 & $D_8$ & $C_8$ & 128 & 32 & 16 & 16 \\ 
        28 & $D_8$ & $C_2\times C_2 \times C_2$ & 1120 & 1024 & 0 & 32 \\  
        29 & $D_8$ & $D_8$ & 192 & 32 & 0 & 32 \\ 
        30 & $D_8$ & $C_4\times C_2$ & 288 & 32 & 0 & 32 \\ 
        31 & $D_8$ & $D_8$ & 1184 & 1024 & 0 & 32 \\ 
        \textbf{32} & $Q_8$ & $Q_8$ & 3040 & 3008 & 0 & 32 \\ 
        \underline{33} & $Q_8$ & $Q_8$ & 64 & 0 & 0 & 64 \\ 
        34 & $Q_8$ & $C_2\times C_2 \times C_2$ & 288 & 32 & 0 & 32 \\ 
        35 & $D_8$ & $Q_8$ & 192 & 32 & 0 & 32 \\ 
        36 & $Q_8$ & $D_8$ & 1184 & 1024 & 0 & 32 \\ 
        37 & $Q_8$ & $C_4\times C_2$ & 1120 & 1024 & 0 & 32 \\
        38 & $Q_8$ & $C_8$ & 128 & 32 & 16 & 16 \\ 
        39 & $Q_8$ & $C_8$ & 288 & 256 & 16 & 16 \\ 
        \underline{\textbf{40}} & $C_2\times C_2 \times C_2$ & $C_2\times C_2 \times C_2$ & 16777216 & 0 & 0 & 16777216 \\ 
        41 & $C_2\times C_2 \times C_2$ & $C_2\times C_2 \times C_2$ & 12480 & 224 & 224 & 32 \\ 
        42 & $C_2\times C_2 \times C_2$ & $Q_8$ & 960 & 224 & 224 & 32 \\ 
        43 & $C_2\times C_2 \times C_2$ & $C_4\times C_2$ & 4800 & 224 & 224 & 32 \\ 
        44 & $C_2\times C_2 \times C_2$ & $C_4\times C_2$ & 77824 & 12288 & 61440 & 4096 \\ 
        45 & $C_2\times C_2 \times C_2$ & $C_4\times C_2$ & 4352 & 112 & 240 & 16 \\ 
        46 & $C_2\times C_2 \times C_2$ & $D_8$ & 81664 & 16128 & 65280 & 256 \\ 
        47 & $C_2\times C_2 \times C_2$ & $D_8$ & 1278 & 63 & 0 & 1 \\ 
        \hline
    \end{tabular}
\end{center}
\caption{$\lambda$-centr. and $\rho$-inv. reflections for solutions associated with skew braces of order $8$.}
\end{table}
}

\normalsize{ 
Finally, for the sake of completeness, the following table collects all reflections for solutions associated with some skew braces which are \textbf{only} $\lambda$-invariant or \textbf{only} $\rho$-centralizing. Additionally, in the last column, we enumerate the reflections that belong to all \textit{four} classes. 


\medskip

{\small
\begin{table}[!htp]
\begin{center}
\begin{tabular}{|c|c|c|c|c|c|c|}
  \hline
    $\#$ & Type & $(B,+)$ & $(B,\circ)$ & $\lambda$-inv. & $\rho$-centr. & All \\ \hline
    2 & \underline{\textbf{1}} & $C_2$ & $C_2$ & 0 & 0 & 4 \\\hline
    3 & \underline{\textbf{1}} & $C_3$ & $C_3$ & 0 & 0 & 27 \\ \hline
    \multirow{4}{1pt}{4} & \underline{\textbf{1}} & $C_4$ & $C_4$ & 0 & 0 & 256 \\ 
         & 2 & $C_4$ & $C_2 \times C_2$ & 0 & 0 & 8 \\ 
         & \underline{\textbf{3}} & $C_2 \times C_2$ & $C_2 \times C_2$ & 0 & 0 & 256 \\ 
         &4 & $C_2 \times C_2$ & $C_4$ & 0 & 0 & 8 \\\hline
    5 & \underline{\textbf{1}} & $C_5$ & $C_5$ & 0 & 0 & 3125 \\ \hline
    \multirow{6}{1pt}{6} & \textbf{1} & $S_3$ & $S_3$ & 0 & 0 & 1 \\
         & \underline{2} & $S_3$ & $S_3$ & 0 & 0 & 1 \\
         & 3 & $S_3$ & $C_6$ & 2 & 0 & 1 \\
         & 4 & $S_3$ & $C_6$ & 0 & 0 & 1 \\
         & 5 & $C_6$ & $S_3$ & 0 & 0 & 9 \\
         & \underline{\textbf{6}} & $C_6$ & $C_6$ & 0 & 0 & 46656 \\ \hline
     7 & \underline{\textbf{1}} & $C_7$ & $C_7$ & 0 & 0 & 823543 \\ \hline
     \end{tabular}
     \caption{$\lambda$-inv. and $\rho$-centr. reflections for solutions associated with skew braces until order $7$.}
\end{center}
\end{table}
}

\section*{Data availability}

The GAP code used for the computation is available from the authors on request.

\section*{Acknowledgments}
The authors are all members of GNSAGA (INdAM). This work was supported by the Dipartimento di Matematica e Fisica ``Ennio De Giorgi" - Universit\`{a} del Salento. 

A.~Albano was supported by a scholarship financed by the Ministerial Decree no. 118/2023, based on the NRRP - funded by the European Union - NextGenerationEU - Mission 4. 

M.~Mazzotta was supported by the ``HaMMon (Hazard Mapping and vulnerability Monitoring) - Spoke 2" project. 

\medskip
We thank the referee for carefully reading our manuscript and the remarks and suggestions that helped us improve the paper.

\bigskip
\bibliography{bibliography}

\bigskip
\bigskip

\begin{itemize}
\item[] Andrea ALBANO (ORCID: 0009-0007-5397-5505)\\
 \emph{Email address:} 
\texttt{andrea.albano@unisalento.it} 
 \vspace{3mm}
    \item[]  Marzia MAZZOTTA (ORCID: 0000-0001-6179-9862) \\
    \emph{Email address:} \texttt{marzia.mazzotta@unisalento.it}
 \vspace{3mm}
       \item[] Paola STEFANELLI  (ORCID: 0000-0003-3899-3151) \\
        \emph{Email address:} \texttt{paola.stefanelli@unisalento.it}
\end{itemize}

\end{document}